\newtheorem{thm}{Theorem}[section]
 \newtheorem{cor}[thm]{Corollary}
 \newtheorem{lem}[thm]{Lemma}
 \newtheorem{prop}[thm]{Proposition}
 \theoremstyle{definition}
 \theoremstyle{remark}
 \newtheorem{rem}[thm]{Remark}
 \newtheorem{ex}[thm]{Example}
 \newtheorem{question}[thm]{Question}
 \numberwithin{equation}{section}
\newcommand{\defeq}{\ensuremath{\mathop{:}\!\!=}}
\newcommand{\1}{\mathds{1}}
\newcommand{\0}{\mathds{O}}
\newcommand{\Q}{\mathbb{Q}}
\newcommand{\R}{\mathbb{R}}
\newcommand{\N}{\mathbb{N}}
\newcommand{\Bo}{\mathcal{B}}
\newcommand{\8}{\infty}
\newcommand{\spa}{\mathrm{span}}
\newcommand{\Sol}{\mathrm{Sol}}
\newcommand{\supp}{\mathrm{supp}}
\newcommand{\Co}{\mathcal{C}}
\newcommand{\Fo}{\mathcal{F}}
\newcommand{\Ho}{\mathcal{H}}
\newcommand{\abs}[1]{\lvert#1\rvert}
\newcommand{\norm}[1]{\lVert#1\rVert}
\newcommand{\bigabs}[1]{\bigl\lvert#1\bigr\rvert}
\newcommand{\term}[1]{{\textit{\textbf{#1}}}}   
\newcommand{\marg}[1]{\marginpar{\tiny #1}}     
\renewcommand{\mid}{\::\:}
\newcommand{\goeseta}{\xrightarrow{\eta}}	
\newcommand{\goesl}{\xrightarrow{\lambda}}	
\newcommand{\goesetaB}{\xrightarrow{\eta_{\mathcal B}}}
\newcommand{\goesueta}{\xrightarrow{\mathrm{u}\eta}}	
\newcommand{\goesuIe}{\xrightarrow{\mathrm{u}_I\eta}}	
\newcommand{\goeso}{\xrightarrow{\mathrm{o}}}	
\newcommand{\goesso}{\xrightarrow{\sigma\mathrm{o}}}	
\newcommand{\goesu}{\xrightarrow{\mathrm{ru}}} 
\newcommand{\goesuo}{\xrightarrow{\mathrm{uo}}}	
\newcommand{\goesmuB}{\xrightarrow{\mu_\mathcal B}} 
\newcommand{\goesseta}{\xrightarrow{\mathrm{s}\eta}} 
\DeclareSymbolFont{bbold}{U}{bbold}{m}{n}
\DeclareSymbolFontAlphabet{\mathbbold}{bbold}
\DeclareMathOperator{\Span}{span}
\renewcommand{\le}{\leqslant}
\renewcommand{\ge}{\geqslant}
\begin{document}

%
%
%
%
%
%
%
%
%

\title[Countability conditions in locally solid convergence spaces]
 {Countability conditions in locally solid convergence spaces}

\author{Eugene Bilokopytov}
\email{bilokopy@ualberta.ca}
\address{Department of Mathematical and Statistical Sciences,
  University of Alberta;}

\author{Viktor Bohdanskyi}
\email{vbogdanskii@ukr.net}
\address{Department of Analysis and Probability, Ukrainian National Technical University ``Igor Sikorsky Kyiv Polytechnic Institute'' (NTUU KPI);}

\author{Jan Harm van der Walt}
\email{janharm.vanderwalt@up.ac.za}
\address{Department of Mathematics and Applied Mathematics, University
  of Pretoria, Pretoria, South Africa}

\subjclass[2020]{Primary 46A19; Secondary 46A40, 54D55}

\keywords{Convergence structures, vector lattices, order convergence}


\begin{abstract}
We study (strong) first countability of locally solid convergence structures on Archimedean vector lattices.  Among other results, we characterise those vector lattices for which relatively unform-, order-, and $\sigma$-order convergence, respectively, is (strongly) first countable. The implications for the validity of sequential arguments in the contexts of these convergences are pointed out.
\end{abstract}

\maketitle
\section{Introduction}

The theory of convergence structures, as set out in \cite{Beattie:02} and \cite{OBrien:23}, provides a general setting for concepts of convergence of nets, filters and sequences which do not necessarily correspond to convergence with respect to a topology.  In the theory of vector lattices there are several natural and important examples of such non-topological convergences.  For instance, for a vector lattice $X$ there exists a (linear) topology $\tau$ on $X$ so that the order convergent nets in $X$ are precisely the $\tau$-convergent nets if and only if $X$ is finite dimensional, see \cite{Dabboorasad:20}; for relatively uniform convergence this is the case if and only if $X$ has a strong order unit, see \cite[Theorem 5]{Dabboorasad:18}.  Motivated by these and other examples, a general theory of locally solid convergence structures is developed in \cite{Bilokopytov:24}, see also \cite{VanderWalt:13}.  In this paper we consider countability conditions in the context of locally solid convergence structures.  Our main results characterise those vector lattices for which order convergence, $\sigma$-order convergence, relative uniform convergence and their unbounded modifications are (strongly) first countable.

The paper is organised in as follows.  Section \ref{sec:prelim} provides general background material on convergence structures.  In particular, sequential, Fr\'{e}chet-Urysohn convergences and (strongly) first countable spaces are discussed in Section \ref{Sec:  Sequential}, and Section \ref{Sect:Bornological} deals with countability properties of bornological convergence and the bounded modification of a convergence structure. Section \ref{Sec:  Locally solid} is dedicated to countability properties of the unbounded- and locally solid modifications of a convergence space. The heart of the paper is Section \ref{Sec: order}:  Here we characterise those vector lattices for which order convergence, $\sigma$-order convergence, and, unbounded order convergence, respectively, is (strongly) first countable.  Finally, in Section \ref{Sec:  Sequential arguments} we discuss the validity of sequential techniques in the presence of (strong) first countability.

\section{Generalities}\label{sec:prelim}

\subsection{Convergence structures}

Throughout this section $X$ denotes a nonempty set, unless otherwise stated.  By a \term{net in $X$} we mean function $x:A\to X$ from a pre-ordered, upward directed set $A$ into $X$; we denote such net by $(x_\alpha)_{\alpha\in A}$.  In keeping with \cite{Bilokopytov:24} we adopt the following notational convention:  the index set of a net is denoted by the capital Greek letter corresponding to the lower-case Greek letter which is used as the subscript for the terms of the net.  Hence the net $(x_\alpha)_{\alpha\in A}$ is denoted simply by $(x_\alpha)$.  A \term{tail} of a net $(x_\alpha)$ is any set of the form $\{x_\alpha\}_{\alpha\ge \alpha_0}$ with $\alpha_0\in A$.  A net $(y_\beta)$ is called a \term{quasi-subnet} of the net $(x_\alpha)$ if every tail of $(x_\alpha)$ contains a tail of $(y_\beta)$.  Note that every subnet of $(x_\alpha)$ is a quasi-subnet of $(x_\alpha)$, but not conversely.

A \term{filter} on $X$ is a nonempty collection $\mathcal{F}$ of nonempty subsets of $X$ which is closed under the formation of finite intersections and supersets.  A \term{filter base} on $X$ is a nonempty collection $\mathcal{B}$ of nonempty subsets of $X$ which is downward directed with respect to inclusion.  Every filter base $\mathcal{B}$ is contained in a smallest filter, $[\mathcal{B}]\defeq \{F\subseteq X \mid B\subseteq F \text{ for some } B\in \mathcal{B}\}$.  We note that if $\mathcal{B}$ is a filter base and $\mathcal{F}$ is a filter so that $\mathcal{F}\cap \mathcal{B}$ is a filter base, then $[\mathcal{F}\cap \mathcal{B}]\subseteq \mathcal{F}\cap [\mathcal{B}]$; in general the inclusion may be strict.  For instance, let $X=\R$, let $\mathcal{F}$ be the neighborhood filter of $0$, and let $\mathcal{B}=\left\{\R\right\}\cup\left\{\left[0,\frac{1}{n}\right)\right\}_{n\in\N}$. Then, $\mathcal{F}\subseteq \left[\mathcal{B}\right]$ but $\mathcal{F}\cap \mathcal{B}=\left\{\R\right\}$, hence $[\mathcal{F}\cap \mathcal{B}]=\{\R\}\ne \mathcal{F}=\mathcal{F}\cap \left[\mathcal{B}\right]$.

With every net $(x_\alpha)$ in $X$ we associate a filter on $X$, namely, its \term{tail filter}, $[x_\alpha]\defeq \{ F\subseteq X \mid F \text{ contains a tail of } (x_\alpha) \}$.  Conversely, for every filter $\mathcal{F}$ on $X$ there exists a (generally not unique) net $(x_\alpha)$ in $X$ so that $\mathcal{F}=[x_\alpha]$.  In particular, given a filter $\mathcal{F}$, define $A\defeq \{(F,x) \mid x\in F\in \mathcal{F}\}$ and order $A$ by reverse inclusion on the first component.  For every $\alpha=(F,x)\in A$ let $x_\alpha \defeq x$.  Then $(x_\alpha)$ is a net in $X$ and $[x_\alpha]=\mathcal{F}$. Hence we can (and do) work with either nets or filters, as is convenient.

A \term{convergence structure} on $X$ is defined by specifying, for every $x \in X$, those nets that converge to $x$, denoted $x_\alpha\to x$, subject to the following axioms.\begin{enumerate}

    \item Constant nets converge: if $x_\alpha=x$ for every $\alpha$ then $x_\alpha\to x$.

    \item If a net converges to $x$ then every quasi-subnet of it converges to $x$.

    \item Suppose that $(x_\alpha)$ and $(y_\alpha)$ both converge to $x$. Let $(z_\alpha)$ be a net in $X$ such that $z_\alpha\in\{x_\alpha,y_\alpha\}$ for every $\alpha$. Then $z_\alpha\to x$.

\end{enumerate}
The set $X$ equipped with a convergence structure is called a \term{convergence space}.

Although it is standard in the literature to define convergence structures in terms of filters, see for instance \cite{Beattie:02}, for applications in analysis it is often convenient to work with nets, and in \cite{OBrien:23} the above definition of a `net convergence structure' is shown to be equivalent to the usual one in terms of filters:  Given a convergence structure on $X$ as defined above, we declare a filter on $X$ to converge to $x$ if there exists a net $x_\alpha \to x$ so that $\mathcal{F}=[x_\alpha]$; this defines a filter convergence structure in the sense of \cite{Beattie:02}.  Conversely, given a filter convergence structure as in \cite{Beattie:02}, a net $(x_\alpha)$ in $X$ converges to $x\in X$ if the tail filter $[x_\alpha]$ converges to $x$; this yields a convergence structure as defined above.  The procedures for passing between net- to filter convergence structures are inverses of each other.

We remark that a slightly different definition of a net convergence structure was introduced in \cite{Aydin:21}, there referred to as a `convergence'. Every convergence structure is a convergence in the sense of \cite{Aydin:21}, but not conversely, as the following example shows, see also Remark \ref{Rem:  Strong order conv}.

\begin{ex}
On $\R^2$, declare $x_\alpha \goeseta x$ if there exists $\alpha_0\in A$ so that $\pi_1(x_\alpha)=\pi_1(x)$ for all $\alpha \ge \alpha_0$, or $\pi_2(x_\alpha)=\pi_2(x)$ for all $\alpha \ge \alpha_0$, with $\pi_1$ and $\pi_2$ denoting the coordinate projections.  It is easy to see that $\eta$ is a convergence in the sense of \cite{Aydin:21}.  However, it does not satisfy axiom (iii) in the definition of a convergence structure.  Indeed, let $x_n\defeq(1,0)$ and $y_n\defeq (0,1)$ for every $n\in\N$.  Let $z_n\defeq x_n$ if $n$ is even, and $z_n\defeq y_n$ if $n$ is odd.  Then $x_n\goeseta (0,0)$ and $y_n\goeseta (0,0)$, but $(z_n)$ is not $\eta$-convergent to $(0,0)$. \qed
\end{ex}

When simultaneously dealing with more than one convergence structure on $X$ we label them $\lambda$, $\eta$, etc., and write $x_\alpha\goesl x$ to denote that a net $(x_\alpha)$ converges to $x$ with respect to $\lambda$, and likewise for filters.  Given two convergence structures $\lambda$ and $\eta$ on $X$, we say that $\lambda$ is stronger than $\eta$ (or $\eta$ is weaker than $\lambda$) and write $\lambda\ge\eta$ if for every net $(x_\alpha)$ in $X$, $x_\alpha\goesl x$ implies that $x_\alpha\goeseta x$; equivalently, for every filter $\mathcal{F}$ on $X$, $\mathcal F\goesl x$ implies that $\mathcal F\goeseta x$.

Most basic concepts from topology, such as continuity of functions, generalise in the obvious way to the setting of convergence spaces:  a characterisation in terms of nets, sequences or filters becomes the definition.  Recall that if $f\colon X\to Y$ is a function and  $\mathcal F$ is a filter on $X$ then we write $f(\mathcal F)$ for the filter generated by $\bigl\{f(A)\mid A\in\mathcal F\bigr\}$. For a net $(x_\alpha)$ in $X$ we have $f\bigl([x_\alpha]\bigr)=\bigl[f(x_\alpha)\bigr]$. A function $f$ between two convergence spaces is \term{continuous} if $x_\alpha\to x$ implies $f(x_\alpha)\to f(x)$ for every net $(x_\alpha)$. Equivalently, if $\mathcal F\to x$ implies $f(\mathcal F)\to f(x)$ for every filter~$\mathcal F$. A convergence structure on a vector space is \term{linear} if the vector space operations are continuous; we call such a space a \term{convergence vector space}. We refer the reader to \cite{Beattie:02}, \cite{Bilokopytov:24} and \cite{OBrien:23} for any undeclared concepts.

One important point where the theory of convergence spaces diverges from topology concerns the closure of a set.  We briefly recall the main points. Let $X$ be a convergence space with convergence structure $\eta$ and $A$ a subset of $X$.  Recall that the \term{adherence} of $A$ is the set of limits of nets in $A$, and is $\overline{A}^{1,\eta}$ or just $\overline{A}^1$.  $A$ is \term{closed} if $A=\overline{A}^{1}$, and the \term{closure} of $A$ is the smallest closed subset $\overline{A}^\eta$ (or just $\overline{A}$) of $X$ containing $A$.  Clearly, $\overline{A}^1\subseteq \overline{A}$; in general the inclusion may be strict.  For an ordinal $\kappa$, we define $\overline{A}^{\kappa+1}$ as the adherence of $\overline{A}^\kappa$;  thus, in particular, $\overline{A}^{2}$ is the adherence of $\overline{A}^{1}$, etc.  If $\kappa$ is a limit ordinal then $\overline{A}^\kappa$ is defined as the union of $\overline{A}^\iota$ for all the ordinals $\iota<\kappa$. It is shown in \cite{Bilokopytov:24} that $\overline{A}^\kappa=\overline{A}$, for a large enough ordinal $\kappa$.

\subsection{Various countability properties of convergence spaces}\label{Sec:  Sequential}

One motivation for studying countability properties of convergence spaces is that under such conditions sequential arguments often suffice.  We briefly recall two relevant `sequential' properties of convergence spaces.

Let $X$ be a convergence space.  Every closed subset $B$ of $X$ is sequentially closed; that is, if $x_n\in B$ for every $n\in\N$ and $x_n\to x$ then $x\in B$. The converse is false even in topological spaces.  We call $X$ \term{sequential} if every sequentially closed set in $X$ is closed.  If the adherence of every set coincides with its sequential adherence, $X$ is called \term{Fr\'{e}chet-Urysohn}; that is, $X$ is Fr\'{e}chet-Urysohn if for every $x\in X$, the range of every net $(x_\alpha)$ converging to $x$ contains a sequence $(x_{\alpha_n})$ that converges to $x$.  It is easy to see that every Fr\'{e}chet-Urysohn convergence space is sequential.

\begin{prop}\label{conv}
Let $A\subseteq X$. If $X$ is sequential, then $\overline{A}=\overline{A}^{\omega_{1}}$; that is for every $x\in \overline{A}$ there is a countable successor ordinal $m$ such that $x\in \overline{A}^{m}$.
\end{prop}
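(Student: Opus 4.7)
The plan is to show that $\overline{A}^{\omega_1}$ is closed, and then deduce the successor ordinal claim by extracting the minimal witness.

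First I would record the elementary fact that the transfinite adherences are monotone: $\overline{A}^{\iota}\subseteq \overline{A}^{\kappa}$ whenever $\iota\le\kappa$. This follows by transfinite induction from the observation that $B\subseteq \overline{B}^1$ for any $B$ (constant nets converge), together with the fact that $\overline{A}^{\kappa}$ for limit $\kappa$ is by definition the union of the earlier adherences.

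Next, I would show that $\overline{A}^{\omega_1}$ is sequentially closed. Suppose $(x_n)$ is a sequence in $\overline{A}^{\omega_1}$ with $x_n\to x$. Since $\omega_1$ is a limit ordinal, each $x_n$ belongs to $\overline{A}^{\kappa_n}$ for some countable ordinal $\kappa_n$. The key set-theoretic input is that $\omega_1$ has uncountable cofinality: the supremum $\beta\defeq \sup_n \kappa_n$ is still a countable ordinal. By monotonicity, every $x_n$ lies in $\overline{A}^{\beta}$, so $x\in \overline{\overline{A}^{\beta}}^{\,1} = \overline{A}^{\beta+1}$. Since $\beta+1<\omega_1$, we conclude $x\in \overline{A}^{\omega_1}$. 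This is the main (and only nontrivial) technical step; sequentiality of $X$ now upgrades ``sequentially closed'' to ``closed,'' giving $\overline{A}\subseteq \overline{A}^{\omega_1}$. The reverse inclusion holds for arbitrary $X$ by the cited result from \cite{Bilokopytov:24} (or directly by monotonicity and the fact that $\overline{A}$ contains every $\overline{A}^\kappa$).

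Finally, to obtain the successor ordinal refinement, fix $x\in\overline{A}=\overline{A}^{\omega_1}$ and let $\mu$ be the least countable ordinal with $x\in \overline{A}^{\mu}$. The ordinal $\mu$ cannot be a nonzero limit ordinal, for then $x\in \overline{A}^{\mu}=\bigcup_{\iota<\mu}\overline{A}^{\iota}$ would produce a strictly smaller witness, contradicting minimality. Hence either $\mu=0$, in which case $x\in A\subseteq \overline{A}^{1}$ and $m\defeq 1$ works, or $\mu$ is itself a countable successor ordinal and we take $m\defeq\mu$.
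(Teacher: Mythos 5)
Your proof is correct and follows essentially the same route as the paper: show $\overline{A}^{\omega_1}$ is sequentially closed by using that the supremum of countably many countable ordinals is countable, then invoke sequentiality to conclude it is closed. The extra details you supply (monotonicity of the transfinite adherences and the minimal-witness extraction of a successor ordinal) are fine refinements of what the paper leaves implicit.
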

\begin{proof}
Let $\left(x_{n}\right)$ be a convergent sequence in $\overline{A}^{\omega_{1}}$.  For every $n$ there is a countable ordinal $m_{n}$ such that $x_{n}\in \overline{A}^{m_{n}}$.  Let $m=\sup m_{n}$, which is a countable ordinal.  It follows that $\left\{x_{n}\right\}_{n\in\N}\subseteq \overline{A}^{m}$ so that the limit of $(x_n)$ is in $\overline{A}^{m+1}\subseteq \overline{A}^{\omega_{1}}$. Hence, $\overline{A}^{\omega_{1}}$ is sequentially closed in a sequential space, thus closed.
\end{proof}

\begin{rem}\label{conv1}
We note that the proof of Proposition \ref{conv} actually yields a slightly stronger result:  If $X$ is sequential, then every sequence in $\overline{A}$ is contained in $\overline{A}^{m}$, for some countable ordinal $m$.
\qed\end{rem}

The definition of a first countable topological space has been generalised to convergence spaces in two ways.  According to \cite[Definition 1.6.3]{Beattie:02}, a convergence space $X$ is \term{first countable} if $\mathcal F\to x$ implies that there exists a filter $\mathcal G\subseteq \mathcal{F}$ with a countable base such that $\mathcal G\to x$.  We say that $X$ is \term{strongly first countable} if for every $x$ there exists a countable collection $\mathcal H$ of subsets of $X$ such that if $\mathcal F\to x$ then there exists a filter base $\mathcal B$ with $\mathcal B\subseteq\mathcal F\cap\mathcal H$ and $[\mathcal B]\to x$, see \cite[Definition 1.6.5]{Beattie:02}.  Such an $\mathcal{H}$ is called a \term{countable local basis} of $X$ at $x$.  Clearly, every strongly first countable convergence structure is first countable.  In the case of a linear convergence structure on a vector space, it suffices to assume that $x=0$.  The definition of strong first countability can be improved as follows.

\begin{prop}\label{prop:  Existence of convergent directed fixed local basis}
Let $X$ be a convergence space.  Then $X$ is strongly first countable if and only if for every $x\in X$ there exists a countable filter base $\mathcal{H}_x$ with the following properties:
\begin{enumerate}[(i)]

    \item Every $H\in \mathcal{H}_x$ contains $x$;

    \item For every $G,H\in \mathcal{H}_x$ we have $G\cap H\in \mathcal{H}_x$;

    \item $[\mathcal{H}_x]\to x$;

    \item For a filter $\mathcal{F}$ we have that $\mathcal{F}\to x$ if and only if $[\mathcal{H}_x\cap \mathcal{F}]\to x$.

\end{enumerate}
\end{prop}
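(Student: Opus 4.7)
The $(\Leftarrow)$ direction is essentially definition chasing: I will show that $\mathcal{H}_x$ is itself a countable local basis at $x$ witnessing strong first countability. Given $\mathcal{F}\to x$, property (iv) supplies $[\mathcal{H}_x\cap\mathcal{F}]\to x$, which in particular means that $\mathcal{H}_x\cap\mathcal{F}$ is a filter base; then $\mathcal{B}\defeq\mathcal{H}_x\cap\mathcal{F}$ is a filter base contained in $\mathcal{F}\cap\mathcal{H}_x$ with $[\mathcal{B}]\to x$, as required.

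For the $(\Rightarrow)$ direction, let $\mathcal{H}$ be a countable local basis at $x$ and put
\[
\mathcal{H}_x\defeq\bigl\{H_1\cap\cdots\cap H_n : n\in\N,\ H_i\in\mathcal{H},\ x\in H_i \text{ for each } i\bigr\}.
\]
Then $\mathcal{H}_x$ is countable, every element contains $x$, and the family is closed under finite intersections, so (i) and (ii) are immediate. For (iii), I will invoke axiom (i) to get $[\{x\}]\to x$; strong first countability then yields a filter base $\mathcal{B}_0\subseteq[\{x\}]\cap\mathcal{H}$ with $[\mathcal{B}_0]\to x$, and every $B\in\mathcal{B}_0$ lies in $\mathcal{H}$ and contains $x$, so $\mathcal{B}_0\subseteq\mathcal{H}_x$ and thus $[\mathcal{B}_0]\subseteq[\mathcal{H}_x]$. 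Because finer filters share limits (axiom (ii)), this forces $[\mathcal{H}_x]\to x$.

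The crux is (iv). The ``$\Leftarrow$'' direction follows from $[\mathcal{H}_x\cap\mathcal{F}]\subseteq\mathcal{F}$ and axiom (ii). For ``$\Rightarrow$'', given $\mathcal{F}\to x$ my trick is to apply strong first countability not to $\mathcal{F}$ itself but to the coarsened filter $\mathcal{F}\cap[\{x\}]$, whose elements are exactly the members of $\mathcal{F}$ containing $x$. This filter converges to $x$ because axiom (iii), in its filter reformulation, says that the meet of two filters converging to $x$ again converges to $x$. Applying strong first countability here yields a filter base $\mathcal{B}_1\subseteq(\mathcal{F}\cap[\{x\}])\cap\mathcal{H}$ with $[\mathcal{B}_1]\to x$, whose elements sit in $\mathcal{F}$, lie in $\mathcal{H}$, and contain $x$, hence belong to $\mathcal{H}_x\cap\mathcal{F}$. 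Combining this with (ii) and the filter property of $\mathcal{F}$ shows that $\mathcal{H}_x\cap\mathcal{F}$ is a nonempty, intersection-closed filter base, and the inclusion $[\mathcal{B}_1]\subseteq[\mathcal{H}_x\cap\mathcal{F}]$ then forces $[\mathcal{H}_x\cap\mathcal{F}]\to x$.

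The only ingredient that is not pure definition unwinding is the filter-form of axiom (iii), namely that $\mathcal{F}\to x$ and $\mathcal{G}\to x$ imply $\mathcal{F}\cap\mathcal{G}\to x$; this is standard (cf.\ \cite{Bilokopytov:24}) and can be obtained from the net axiom by realising both filters as tail filters of nets over a common product-directed index set and mixing them via axiom (iii). I expect this to be the only step requiring a brief appeal beyond the axioms as stated.
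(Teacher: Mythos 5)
Your proof is correct and follows essentially the same route as the paper's: the same $\mathcal{H}_x$ consisting of finite intersections of members of $\mathcal{H}$ containing $x$, and the same key step of coarsening to $\mathcal{F}\cap[x]$ (using that the meet of two filters converging to $x$ converges to $x$) before invoking strong first countability, with the finer filter $[\mathcal{H}_x\cap\mathcal{F}]$ then converging. The only cosmetic differences are that the paper derives (iii) from (iv) by taking $\mathcal{F}=[x]$ rather than proving it separately, and uses the filter form of the mixing axiom silently where you make it explicit.
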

\begin{proof}
We only need to prove necessity. Let $X$ be a strongly first countable convergence space, $x\in X$, and $\mathcal{H}$ a countable local basis at $x$.  Let $\mathcal{H}_x \defeq \{H_{1}\cap...\cap H_{n} \mid x\in H_{1},...,H_{n}\in \mathcal{H}\}$. It is clear that the first two conditions are satisfied for  $\mathcal{H}_x$.

Let $\mathcal{F}$ be a filter on $X$. If $[\mathcal{H}_x\cap \mathcal{F}]\to x$, then $\mathcal{F}\to x$ due to the fact that $[\mathcal{H}_x\cap \mathcal{F}]\subseteq \mathcal{F}$. Conversely, if $\mathcal{F}\to x$, then $\mathcal{F}\cap [x]\to x$, and so there is a filter base $\mathcal B\subseteq [x]\cap\mathcal{F}\cap \mathcal H$ such that $[\mathcal B]\to x$. Then, $\mathcal B\subseteq \mathcal{H}_x\cap\mathcal{F}$, and so $[\mathcal{H}_x\cap\mathcal{F}]\to x$. In particular, taking $\mathcal{F}:= [x]$ yields $[\mathcal{H}_x]=[\mathcal{H}_x\cap [x]]\to x$.
\end{proof}

We will call a filter base with the properties listed in Proposition \ref{prop:  Existence of convergent directed fixed local basis} a \term{fixed intersection-closed local basis} at $x$. \medskip

The definition of first countability may be restated in terms of nets as follows:  $X$ is first countable if for every $x_\alpha \to x$ there exists a net $(y_\gamma)_{\gamma\in \Gamma}\to x$ such that $(x_\alpha)$ is a quasi-subnet of $(y_{\gamma})$ and $\Gamma$ contains a countable cofinal set, see \cite[Proposition 6.1]{OBrien:23}.  We call $X$ \term{weakly first countable} if for every $x_\alpha \to x$ in $X$ there exists an increasing sequence $(\alpha_n')$ in $A$ so that if $\alpha_n\ge \alpha_n'$ for every $n\in\N$ then $x_{\alpha_n}\to x$.  It was proven in \cite[Proposition 6.2]{OBrien:23} that first countability implies weak first countability.  Clearly every weakly first countable convergence space, in particular every first countable space, is  Fr\'{e}chet-Urysohn. For topological spaces, strong first countability and first countability both reduce to the usual definition of a first countable topological space.  More generally, a pretopological convergence space (see \cite[Definition 3.1.5]]{Beattie:02}) is first countable if and only if it is strongly first countable, see \cite[Remark 1.6.6]{Beattie:02}.  Let us show that, in this case, weak first countability is also equivalent to first countability.

\begin{prop}\label{Prop:  First countable char}
Let $X$ be a convergence space.  Consider the following statements. \begin{enumerate}

      \item[(i)] $X$ is strongly first countable.
      \item[(ii)] $X$ is first countable.
      \item[(iii)] $X$ is weakly first countable.
\end{enumerate}

Always, (i) implies (ii) and (ii) implies (iii).  If $X$ is pre-topological then all three statements are equivalent.
\end{prop}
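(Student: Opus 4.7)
The plan is to establish the three implications in turn. The first two hold in any convergence space, while the third requires the pretopological hypothesis to close the cycle.

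For (i) $\Rightarrow$ (ii), the implication is essentially immediate from the definitions. Given a countable local basis $\mathcal{H}$ at $x$ and a filter $\mathcal{F}\to x$, strong first countability supplies a filter base $\mathcal{B}\subseteq \mathcal{F}\cap \mathcal{H}$ with $[\mathcal{B}]\to x$; then $\mathcal{G}:=[\mathcal{B}]$ is contained in $\mathcal{F}$, has the countable base $\mathcal{B}\subseteq \mathcal{H}$, and converges to $x$, which is precisely the defining property of first countability. For (ii) $\Rightarrow$ (iii), I would simply cite \cite[Proposition 6.2]{OBrien:23}; no new argument is needed.

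The substantive step is to close the cycle in the pretopological case, for which I would prove (iii) $\Rightarrow$ (i) directly. Fix $x\in X$. Since $X$ is pretopological, the neighborhood filter $\mathcal{V}(x)$ is the smallest filter converging to $x$, and in particular $\mathcal{V}(x)\to x$. Using the standard construction recalled at the start of Section \ref{sec:prelim}, realize $\mathcal{V}(x)$ as the tail filter of a net $(v_\alpha)_{\alpha\in A}$ with $v_\alpha\to x$. Weak first countability then yields an increasing sequence $(\alpha_n')\subseteq A$ such that $v_{\beta_n}\to x$ for every sequence $(\beta_n)$ with $\beta_n\ge \alpha_n'$. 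Setting $T_n:=\{v_\alpha\mid \alpha\ge \alpha_n'\}$, my candidate countable local basis at $x$ is $\mathcal{H}:=\{T_n\}_{n\in\N}$, which is clearly a decreasing filter base consisting of elements of $\mathcal{V}(x)$.

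The key obstacle is to verify that $[\mathcal{H}]=\mathcal{V}(x)$, that is, that every $V\in\mathcal{V}(x)$ contains some $T_n$. I would argue by contradiction: if no $T_n$ sits inside $V$, countable choice furnishes $\beta_n\ge \alpha_n'$ with $v_{\beta_n}\notin V$ for every $n$; weak first countability then gives $v_{\beta_n}\to x$ as a sequence, so $\mathcal{V}(x)\subseteq [v_{\beta_n}]$, whence a tail of $(v_{\beta_n})$ lies in $V$, contradicting $v_{\beta_n}\notin V$ for all $n$. Once this is established, for any $\mathcal{F}\to x$ pretopologicality yields $\mathcal{V}(x)\subseteq \mathcal{F}$, so $T_n\in \mathcal{F}\cap \mathcal{H}$ for every $n$; then $\mathcal{H}$ itself serves as a filter base in $\mathcal{F}\cap \mathcal{H}$ with $[\mathcal{H}]=\mathcal{V}(x)\to x$, verifying strong first countability.
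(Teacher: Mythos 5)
Your proof is correct and follows essentially the same route as the paper: apply weak first countability to the canonical net realizing the neighbourhood filter at $x$, then use a countable-choice/contradiction argument to show the resulting countable family generates $\mathcal{V}(x)$, exactly as in the paper's displayed intersection identity. The only (harmless) difference is cosmetic: you exhibit the tails $T_n$ directly as a countable local basis and so obtain (i) from (iii) in one step, whereas the paper concludes that the neighbourhood filter has a countable base (first countability) and then invokes the cited fact that (i) and (ii) coincide for pretopological spaces.
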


\begin{proof}
As mentioned, (i) implies (ii), and (ii) implies (i) if $X$ is pretopological; that (ii) implies (iii) is proven in \cite[Proposition 6.2]{OBrien:23}.  Assume that $X$ is pretopological and (iii) holds.

Fix $x\in X$ and let $\mathcal{N}$ denote the neighbourhood filter at $x$.  Let $\Gamma \defeq \{(U,x) \mid x \in U\in \mathcal{N}\}$, ordered by reverse inclusion of the first component.  For $\gamma=(U,x)\in \Gamma$ let $x_\gamma=x$.  Then $(x_\gamma)$ is a net in $X$ and, since $X$ is pretopological, $[x_\gamma]=\mathcal{N}\to x$.  By assumption, there exists an increasing sequence $(\gamma_n')$ in $\Gamma$ so that if $\gamma_n\ge \gamma_n'$ for all $n\in\N$ then $x_{\gamma_n}\to x$.  For every $n\in\N$ let $\gamma_n'=(U_n,x_n)$.  Let $\mathcal{F}\defeq [\{U_n \mid n\in\N\}]\subseteq\mathcal{N}$.  Since $X$ is pretopological,
\[
\mathcal{F}=\bigcap\left\{ [x_{\gamma_n}] ~:~ \gamma_n \ge \gamma_n',~ n\in\N\right\} \to x.
\]
Since $\mathcal{N}$ is the least (w.r.t. inclusion) filter converging to $x$, we have $\mathcal{N}\subseteq \mathcal{F}$. Therefore $\mathcal{N}=\mathcal{F}$, hence $\mathcal{N}$ has a countable base so that $X$ is first countable.
\end{proof}

The property of being (strongly) first countable is stable under certain operations.  By \cite[Corollary 1.6.8]{Beattie:02}, a subspace of a (strongly) first countable convergence space is again (strongly) first countable. More generally, if $\{X_n\}_{n\in \N}$ is a sequence of (strongly) first countable spaces and for every $n\in \N$,  $f_n\colon X\to X_{n}$ is a map from some set $X$ to $X_n$, then the \term{initial convergence structure} on $X$ with respect to this sequence of maps (i.e. $x_{\alpha}\to x$ if $f_{n}(x_\alpha)\to f_n(x)$, for every $n\in\N$) is again (strongly) first countable; see \cite[Proposition 1.6.7(i)]{Beattie:02}. Furthermore, let $\{X_i\}_{i\in I}$ be a family of convergence spaces and, for every $i\in I$,  $f_i\colon X_i\to X$ a map from $X_i$ to some set $X$.  If the $X_i$ are all first countable, then the \term{final convergence structure} on $X$ with respect to the family of maps $\{f_i\}_{i\in I}$, that is, the strongest convergence structure on $X$ that makes all the $f_i$'s continuous, is again first countable.  If $I$ is countable, each $f_i$ is injective, and every $X_i$ is strongly first countable, then $X$ is strongly first countable; see \cite[Definition 1.2.9 and Proposition 1.6.7(iii)]{Beattie:02}.

For completeness, we establish analogous permanence results for weak first countability. Clearly, this property passes down to subspaces. We now consider initial and final convergence structures.

\begin{prop}
Let $X$ be a nonempty set.  \begin{enumerate}
      \item[(i)] Let $\{X_n\}_{n\in \N}$ be a sequence of weakly first countable spaces and for every $n\in \N$,  $f_n\colon X\to X_{n}$ a map from $X$ to $X_n$.  Then the initial convergence structure on $X$ with respect to this sequence of maps is weakly first countable.
      \item[(ii)] Let $\{X_i\}_{i\in I}$ be a family of weakly first countable convergence spaces, and $f_i$ a function from $X_i$ to $X$ for every $i\in I$.  Then the final convergence structure on $X$ with respect to the family of maps $\{f_i\}_{i\in I}$ is weakly first countable.
\end{enumerate}
\end{prop}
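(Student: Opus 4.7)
For part (i), the plan is a diagonal argument. Given $(x_\alpha)_{\alpha\in A}\to x$ in $X$ with the initial structure, one has $(f_n(x_\alpha))\to f_n(x)$ in each $X_n$, and weak first countability of $X_n$ supplies an increasing sequence $(\alpha_k^{(n)})_{k\in\N}$ in $A$ such that $\alpha_k\ge\alpha_k^{(n)}$ for every $k$ forces $(f_n(x_{\alpha_k}))\to f_n(x)$. I will choose $\alpha_k'$ to be an upper bound in $A$ of $\{\alpha_k^{(m)}\mid m\le k\}\cup\{\alpha_{k-1}'\}$, so that $(\alpha_k')$ is increasing. If $\alpha_k\ge\alpha_k'$ for all $k$, then for any fixed $n$ we have $\alpha_k\ge\alpha_k^{(n)}$ whenever $k\ge n$; replacing the first $n-1$ terms of $(\alpha_k)$ by $\alpha_k^{(n)}$ leaves the tail filter of $(f_n(x_{\alpha_k}))$ unchanged while producing a sequence satisfying the hypothesis of weak first countability, so $(f_n(x_{\alpha_k}))\to f_n(x)$ for every $n$, giving $(x_{\alpha_k})\to x$.

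For part (ii), I will use the standard characterization of the final convergence structure generated from the axioms and the continuity of each $f_i$: a filter $\mathcal{F}$ on $X$ converges to $x$ if and only if $\mathcal{F}\supseteq [x]\cap f_{i_1}(\mathcal{G}_1)\cap\cdots\cap f_{i_m}(\mathcal{G}_m)$ for some finite family with $\mathcal{G}_j\to y_j$ in $X_{i_j}$ and $f_{i_j}(y_j)=x$. Starting from $(x_\alpha)_{\alpha\in A}\to x$, I realize each $\mathcal{G}_j$ as the tail filter of a net $(y_\beta^{(j)})_{\beta\in B_j}\to y_j$, and apply weak first countability of $X_{i_j}$ to obtain increasing $(\beta_n^{(j)})_{n\in\N}$ such that $\beta_n\ge\beta_n^{(j)}$ for all $n$ forces $(y_{\beta_n}^{(j)})\to y_j$. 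The basic set $U_n\defeq\{x\}\cup\bigcup_{j=1}^m f_{i_j}(\{y_\beta^{(j)}\mid\beta\ge\beta_n^{(j)}\})$ lies in $[x]\cap\bigcap_j f_{i_j}(\mathcal{G}_j)\subseteq [x_\alpha]$, so there exists $\alpha_n'\in A$ (chosen increasing) with $\alpha\ge\alpha_n'$ forcing $x_\alpha\in U_n$. Now suppose $\alpha_n\ge\alpha_n'$ for every $n$: each $x_{\alpha_n}$ lies in $U_n$, so I partition $\N=N_0\sqcup N_1\sqcup\cdots\sqcup N_m$ according to which summand of $U_n$ contains $x_{\alpha_n}$ (with $N_0$ collecting those $n$ for which $x_{\alpha_n}=x$). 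For each $j\ge 1$ and $n\in N_j$, I pick $\gamma_n^{(j)}\ge\beta_n^{(j)}$ with $f_{i_j}(y_{\gamma_n^{(j)}}^{(j)})=x_{\alpha_n}$, and set $\gamma_n^{(j)}\defeq\beta_n^{(j)}$ for $n\notin N_j$; the weak first countability condition then delivers $(y_{\gamma_n^{(j)}}^{(j)})\to y_j$, so by continuity $v_n^{(j)}\defeq f_{i_j}(y_{\gamma_n^{(j)}}^{(j)})\to x$ in the final structure. Together with the constant sequence $v_n^{(0)}\defeq x$, this produces $m+1$ $\N$-indexed sequences converging to $x$, with $x_{\alpha_n}\in\{v_n^{(0)},\dots,v_n^{(m)}\}$ for every $n$, and the finite-family version of axiom (iii)---obtained by induction from the two-net version---yields $(x_{\alpha_n})\to x$.

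The most delicate point will be the reassembly step ending (ii): the assignment $n\mapsto j$ with $n\in N_j$ need not stabilize, so one cannot simply extract a subsequence living inside a single source $X_{i_j}$. Instead, all $m$ pieces have to be lifted simultaneously to full $\N$-indexed convergent sequences, which works precisely because weak first countability imposes no cofinality requirement on the lifting indices---only $\gamma_n^{(j)}\ge\beta_n^{(j)}$ is needed---and axiom (iii) then stitches everything back together. Everything else---the filter characterization of the final structure, the diagonalization in (i), and the induction extending axiom (iii) to finite families---should be routine bookkeeping.
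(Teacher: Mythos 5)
Your proposal is correct and follows essentially the same route as the paper: the same diagonalization in (i), and in (ii) the same strategy of extracting witnessing nets from the definition of the final structure, lifting the chosen sequence by the ``pull back if possible, otherwise default'' device, applying weak first countability in each source space, and reassembling. The only cosmetic differences are that the paper concludes (ii) directly from the filter description of final convergence rather than via continuity of the $f_{i_j}$ plus an iterated axiom (iii), and that you spell out the finitely-many-initial-indices adjustment in (i) which the paper leaves implicit.
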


\begin{proof}
(i): Let $x_\alpha \xrightarrow{} x$ in the initial convergence structure on $X$ with respect to $\{f_n\}_{n\in \N}$. It follows that $f_{n}(x_\alpha)\to f_n(x)$, for every $n\in\N$. For every $n\in\N$ the assumption of weak first countability of $X_{n}$ yields a sequence $(\alpha_{m}^{n})_{m\in\N}$ such that whenever $\alpha_{m}\ge\alpha_{m}^{n}$, for every $m\in\N$, then $f_{n}\left(x_{\alpha_{m}}\right)\to f_{n}(x)$ in $X_{n}$. Find $\beta_{m}\ge \alpha_{k}^{n}$, for every $k,n=1,...,m$. Assume that $\alpha_{m}\ge\beta_{m}$, for every $m\in\N$. Then, for every $n\in\N$ we have $\alpha_{m}\ge\alpha_{m}^{n}$, for $m\ge n$, and so $f_{n}\left(x_{\alpha_{m}}\right)\to f_{n}(x)$ in $X_{n}$. We conclude that $x_{\alpha_{m}} \xrightarrow{} x$. \medskip

(ii): Let $x_\alpha \xrightarrow{} x$ in the final convergence structure on $X$ with respect to $\{f_i\}_{i\in I}$.  If $(x_\alpha)$ is eventually constant, then we are done.  Hence, suppose that this is not the case.  It follows from \cite[Definition 1.2.9]{Beattie:02} and the correspondence between nets and filters that there exists $m\in \N$ and $i_1,\ldots,i_m\in I$ so that for every $k=1,\ldots ,m$ there exists $x_k\in X_{i_k}$ and a net $x_\beta^k\xrightarrow{} x_k$ in $X_{i_k}$ so that $f_{i_k}(x_k)=x$ and for every $\beta_0\in B$ there exists $\alpha_0\in A$ so that $\{x_\alpha\}_{\alpha\ge \alpha_0}\subseteq \bigcup_{k=1}^m\{f_{i_k}(x_{\beta}^k)\}_{\beta\ge\beta_0}$.

By the weak first countability of the spaces $X_{i_1},\ldots, X_{i_m}$ there exists an increasing sequence $(\beta_n')$ in $B$ so that if $\beta_n \ge \beta_n'$ for every $n\in \N$, then $x_{\beta_n}^k\xrightarrow{} x_k$ in $X_{i_k}$ for every $k=1,\ldots, m$.  Select and increasing sequence $(\alpha_n')$ in $A$ so that $\{x_\alpha\}_{\alpha\ge \alpha_n'}\subseteq \bigcup_{k=1}^m\{f_{i_k}(x_{\beta}^k)\}_{\beta\ge\beta_n'}$ for every $n\in \N$.  Let $\alpha_n\ge \alpha_n'$ for every $n\in \N$.  For each $k=1,\ldots,m$, define a sequence $(z_n^k)$ in $X_{i_k}$ as follows:  If there exists $\beta \ge \beta_n'$ so that $x_{\alpha_n}=f_{i_k}(x_\beta^k)$, let $z_n^k\defeq x_\beta^k$; otherwise let $z_n^k\defeq x_{\beta_n'}^k$.  Then $z_n^k \xrightarrow{} x_k$ in $X_{i_k}$ for every $k=1,\ldots,m$.  Since $x_{\alpha_n}\in \{f_{i_k}(z_n^k) ~:~ k=1,\ldots,m\}$ for every $n\in \N$, it follows that $\{x_{\alpha_n}\}_{n\ge n_0} \subseteq \bigcup_{k=1}^m \{f_{i_k}(z_n^k)\}_{n\ge n_0}$ for each $n_0\in\N$.  Therefore $x_{\alpha_n}\xrightarrow{} x$ in the final convergence structure on $X$ with respect to $\{f_i\}_{i\in I}$.
\end{proof}

In conclusion, let us come back to the hierarchy of the countability properties. We call a convergence space \term{monotone Fr\'{e}chet-Urysohn} if for every $x_\alpha \to x$ there exists an increasing sequence $(\alpha_n)$ of indexes so that $x_{\alpha_n}\to x$. Clearly, this property implies the Fr\'{e}chet-Urysohn property. It is also easy to see that every weakly first countable convergence space is monotone Fr\'{e}chet-Urysohn.

\begin{question}
\begin{enumerate}
\item[(i)] Does there exist a Fr\'{e}chet-Urysohn convergence space which is not monotone Fr\'{e}chet-Urysohn?
\item[(ii)] Does there exist a monotone Fr\'{e}chet-Urysohn convergence space which is not weakly first countable?
\item[(iii)] Does there exist a weakly first countable convergence space which is not first countable?
\item[(iv)] Do there exist examples for parts (i) and (ii) in the class of topological spaces?
\end{enumerate}
\end{question}

\begin{rem}
After posting a preprint of the pre-review version of this paper the authors were contacted by Omer Cantor who was able to provide solutions to all parts of the question. These will be published elsewhere.
\qed\end{rem}

\subsection{Bornological convergence and the bounded modification}\label{Sect:Bornological}

If $(X,\eta)$ is a convergence space and $\mathcal{B}$ is a bornology on $X$, then the \term{bounded modification} of $\eta$ by $\mathcal{B}$ is defined as follows:  $x_\alpha \goesetaB x$ if $x_\alpha\goeseta x$ and a tail of $(x_\alpha)$ belongs to $\mathcal{B}$.  In terms of filters, $\mathcal{F}\goesetaB x$ if $\mathcal{F}\goeseta x$ and $\mathcal{F}\cap \mathcal{B}\neq \varnothing$.  This is the strongest convergence on $X$ that agrees with $\eta$ on sets in $\mathcal{B}$.  We call $\eta$ \term{locally $\mathcal{B}$-bounded} if $\eta=\eta_{\mathcal{B}}$. Note that if $\eta$ is locally $\mathcal{B}$-bounded, and $\mathcal{H}$ is a local basis for $\eta$ at $x\in X$, then $\mathcal{B}\cap \mathcal{H}$ is also a local basis at $x$.

The following characterisation of $\eta_\mathcal{B}$-closed sets is used below.

\begin{prop}\label{Prop:  Closed sets wrt bounded mod}
Let $(X,\eta)$ be a convergence space, $\mathcal{B}$ a bornology on $X$ with base $\mathcal{D}$, and $A\subseteq X$.  Then $A$ is $\eta_\mathcal{B}$-closed if and only if $A\cap D$ is $\eta$-closed in $D$ for every $D\in \mathcal{D}$.
\end{prop}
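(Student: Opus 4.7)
The plan is to unpack both sides using the net-based definitions: $A$ is $\eta_\mathcal{B}$-closed means that whenever $(x_\alpha)\subseteq A$ with $x_\alpha\goeseta x$ and some tail lies in $\mathcal{B}$, then $x\in A$; while $A\cap D$ is $\eta$-closed in $D$ (equipped with the subspace convergence) means that whenever $(x_\alpha)\subseteq A\cap D$ with $x_\alpha\goeseta x$ and $x\in D$, then $x\in A$. Each direction then follows from moving between a net in the full space and a tail living in one base set.

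For the forward direction, I would fix $D\in\mathcal{D}$ and take a net $(x_\alpha)$ in $A\cap D$ with $x_\alpha\goeseta x$ and $x\in D$. Since every tail of $(x_\alpha)$ is a subset of $D$, and $D\in\mathcal{D}\subseteq\mathcal{B}$, each tail lies in $\mathcal{B}$, so $x_\alpha\goesetaB x$. The assumed $\eta_\mathcal{B}$-closedness of $A$ then yields $x\in A$, hence $x\in A\cap D$, which proves $A\cap D$ is $\eta$-closed in the subspace $D$.

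For the backward direction, I would start with a net $(x_\alpha)$ in $A$ with $x_\alpha\goesetaB x$, so that $x_\alpha\goeseta x$ and some tail $T=\{x_\alpha\}_{\alpha\ge\alpha_0}$ belongs to $\mathcal{B}$. Using that the bornology $\mathcal{B}$ covers $X$ (so $\{x\}\in\mathcal{B}$) and is closed under finite unions, we get $T\cup\{x\}\in\mathcal{B}$, and then using that $\mathcal{D}$ is a base of $\mathcal{B}$, we pick $D\in\mathcal{D}$ with $T\cup\{x\}\subseteq D$. The tail net $(x_\alpha)_{\alpha\ge\alpha_0}$ is then a net in $A\cap D$, it is a quasi-subnet of $(x_\alpha)$ and so still $\eta$-converges to $x$, and $x\in D$. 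The assumption that $A\cap D$ is $\eta$-closed in $D$ therefore forces $x\in A\cap D\subseteq A$.

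The main obstacle is essentially bookkeeping rather than a deep step: having identified a base element $D'$ containing the tail $T$, we cannot directly apply the subspace closedness in $D'$ because a priori $x$ need not lie in $D'$. The resolution is to enlarge the chosen base element by absorbing the singleton $\{x\}$ into the bornology before invoking the base property. This is the only place where the standard axiom that a bornology covers $X$ (equivalently, contains all singletons) is used, and without it the backward implication would in general fail.
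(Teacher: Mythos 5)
Your proof is correct and follows essentially the same route as the paper: the backward direction is the paper's argument verbatim (choose $D\in\mathcal{D}$ containing the tail together with $\{x\}$, then apply closedness of $A\cap D$ in $D$), and your forward direction just spells out the paper's one-line observation that $\eta$ and $\eta_{\mathcal{B}}$ agree on members of $\mathcal{B}$. The extra bookkeeping you do (absorbing the singleton via the covering and finite-union properties of the bornology) is exactly what the paper leaves implicit.
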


\begin{proof}
Assume that $A\cap D$ is $\eta$-closed in $D$ for every $D\in \mathcal{D}$.  Let $(x_\alpha )$ be a net in $A$ so that $x_\alpha \goesetaB x$.  Then there exists $\alpha_0\in A$ so that $\{x_\alpha\}_{\alpha\geq \alpha_0}\in \mathcal{B}$.  Therefore, there exists $D\in\mathcal{D}$ so that $\{x\}\cup \{x_\alpha\}_{\alpha\geq \alpha_0}\subseteq D$. By assumption, $A\cap D$ is $\eta$-closed in $D$.  Since $x_\alpha \goeseta x\in D$ it therefore follows that $x\in A$, hence $A$ is $\eta_\mathcal{B}$-closed.

The converse follows immediately from the fact that $\eta$ and $\eta_\mathcal{B}$ agree on members of $\mathcal{B}$.
\end{proof}

\begin{prop}\label{Prop: Bdmod-firs-ctb}
Let $(X,\eta)$ be a convergence space and $\mathcal{B}$ a bornology on $X$.  The following statements are true. \begin{enumerate}[(i)]

    \item\label{Prop: Bdmod-firs-ctb 1st c} If $\eta$ is first countable then so is $\eta_\mathcal{B}$.

    \item\label{Prop: Bdmod-weak-firs-ctb 1st c} If $\eta$ is weakly first countable then so is $\eta_\mathcal{B}$.

    \item\label{Prop: Bdmod-Frechet-Urosohn-Frechet-Urysohn} If $\eta$ is Fr\'{e}chet-Urysohn then so is $\eta_\mathcal{B}$.

    \item\label{Prop: Bdmod-Sequential-Sequential} If $\eta$ is sequential and $\mathcal{B}$ has a base consisting of $\eta$-closed sets, then $\eta_\mathcal{B}$ is sequential.

    \item\label{Prop: Bdmod-firs-ctb str 1st c} If $\eta$ is strongly first countable and $\mathcal{B}$ has a countable base, then $\eta_\mathcal{B}$ is strongly first countable.
\end{enumerate}
\end{prop}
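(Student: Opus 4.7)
The proof handles each part separately, with the common theme that a net or filter $\goesetaB$-converges iff it $\goeseta$-converges and carries a $\mathcal{B}$-tail (or contains a set in $\mathcal{B}$). So each countability witness for $\eta$ can be upgraded to one for $\eta_\mathcal{B}$ by attaching a bornological set.

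For (i), given $\mathcal{F}\goesetaB x$, first countability of $\eta$ produces a countably based $\mathcal{G}\subseteq\mathcal{F}$ with $\mathcal{G}\goeseta x$; pick $B\in\mathcal{F}\cap\mathcal{B}$, then $[\mathcal{G}\cup\{B\}]\subseteq\mathcal{F}$ is still countably based, refines $\mathcal{G}$ (hence still $\eta$-converges), and contains $B\in\mathcal{B}$, so it $\goesetaB$-converges to $x$. For (ii), given $x_\alpha\goesetaB x$ with $\{x_\alpha\}_{\alpha\ge\alpha_0}\in\mathcal{B}$, take the witness $(\alpha_n')$ from weak first countability of $\eta$ and recursively choose $\alpha_n''$ above $\alpha_n'$, $\alpha_{n-1}''$, and $\alpha_0$; any $\alpha_n\ge\alpha_n''$ then lies in the bounded tail and still $\eta$-converges, so $\goesetaB$-converges. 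For (iii), replace $(x_\alpha)$ by its tail $(x_\alpha)_{\alpha\ge\alpha_0}$, whose range lies in $\mathcal{B}$ and which still $\eta$-converges to $x$; Fr\'echet--Urysohn of $\eta$ extracts from this range a sequence $\eta$-converging to $x$, and by downward closedness of the bornology its entire range lies in $\mathcal{B}$, giving $\goesetaB$-convergence.

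For (iv), by Proposition \ref{Prop:  Closed sets wrt bounded mod} it suffices to check, for any $\eta_\mathcal{B}$-sequentially closed $A$ and each $D$ in the given $\eta$-closed base $\mathcal{D}$, that $A\cap D$ is $\eta$-closed in $D$. Any $\eta$-convergent sequence in $A\cap D$ has range inside $D\in\mathcal{B}$, so it $\goesetaB$-converges; its limit therefore lies in $A$ by hypothesis and in $D$ because $D$ is $\eta$-closed. Hence $A\cap D$ is $\eta$-sequentially closed, and sequentiality of $\eta$ forces it to be $\eta$-closed.

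The main work is (v). Take the fixed intersection-closed local basis $\mathcal{H}_x$ for $\eta$ at $x$ from Proposition \ref{prop:  Existence of convergent directed fixed local basis} and a countable base $\{B_n\}_{n\in\N}$ of $\mathcal{B}$, and set $\mathcal{H}'_x\defeq\{H\cap B_n\mid H\in\mathcal{H}_x,\ n\in\N\}$, which is countable. Given $\mathcal{F}\goesetaB x$, fix $n_0$ with $B_{n_0}\in\mathcal{F}$; then $\{H\cap B_{n_0}\mid H\in\mathcal{H}_x\cap\mathcal{F}\}$ is a filter base contained in $\mathcal{F}\cap\mathcal{H}'_x$ (using intersection-closedness of $\mathcal{H}_x$), and the filter it generates refines $[\mathcal{H}_x\cap\mathcal{F}]$, which $\eta$-converges to $x$ by Proposition \ref{prop:  Existence of convergent directed fixed local basis}(iv); since $B_{n_0}\in\mathcal{B}$ belongs to this filter, $\goesetaB$-convergence follows. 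The obstacle here is exactly this coordination: a naive $\mathcal{H}\cup\{B_n\}$ does not suffice, because we must produce a single filter base inside $\mathcal{F}\cap\mathcal{H}'_x$ which is simultaneously $\eta$-convergent and bornological, and intersecting the fixed intersection-closed $\eta$-local basis with a bornological set is what reconciles all three demands.
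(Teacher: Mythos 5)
Your proofs of (i)--(iv) are correct and essentially identical to the paper's: (i) adjoins the bounded set $B$ to the countably based subfilter, (ii)--(iii) pass to the bounded tail and use heredity of the bornology, and (iv) combines Proposition \ref{Prop:  Closed sets wrt bounded mod} with the $\eta$-closedness of the base sets and sequentiality of $\eta$, exactly as in the paper. Part (v) is where you genuinely diverge: the paper disposes of it in one line by observing that $\eta_\mathcal{B}$ is the final convergence structure with respect to the countably many inclusions $j_n\colon B_n\to X$ and then invoking the cited permanence result for final structures of countably many injective maps from strongly first countable spaces (each $B_n$ being a subspace of $(X,\eta)$). You instead give a direct construction of a countable local basis $\mathcal{H}'_x=\{H\cap B_n\mid H\in\mathcal{H}_x,\ n\in\N\}$ from the fixed intersection-closed basis of Proposition \ref{prop:  Existence of convergent directed fixed local basis}, verify that $\{H\cap B_{n_0}\mid H\in\mathcal{H}_x\cap\mathcal{F}\}$ is a filter base inside $\mathcal{F}\cap\mathcal{H}'_x$ whose generated filter refines $[\mathcal{H}_x\cap\mathcal{F}]$ and contains $B_{n_0}\in\mathcal{B}$; this checks out (intersection-closedness gives directedness, membership in $\mathcal{F}$ gives nonemptiness, and finer filters inherit $\eta$-convergence). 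Your route is longer but self-contained and makes explicit which countable family witnesses strong first countability of $\eta_\mathcal{B}$, whereas the paper's route is shorter at the cost of leaning on the external permanence machinery from Beattie--Butzmann; both are valid proofs.
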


\begin{proof}
(i):~ Assume that $\eta$ is first countable, and let $\mathcal{F}\goesetaB 0$.  Then $\mathcal{F}\goeseta 0$ and there exists a set $B\in\mathcal{B}\cap \mathcal{F}$.  By assumption, there exists a filter $\mathcal{G}\subseteq \mathcal{F}$ with a countable base $\{G_n\}_{n\in\N}$ so that $\mathcal{G}\goeseta 0$.  Let $\mathcal{D}=[\{B\cap G_n \mid n\in\N\}]$.  Then $\mathcal{G}\subseteq \mathcal{D}\subseteq \mathcal{F}$ and $B\in\mathcal{D}\cap\mathcal{B}$ so that, in particular, $\mathcal{D}\goesetaB 0$.  Hence $\eta_\mathcal{B}$ is first countable.

(ii):~ Assume that $\eta$ is weakly first countable.  Let $x_\alpha \goesetaB 0$.  Then $x_\alpha \goeseta 0$, and there exists $\alpha'_0\in A$ so that $\{x_\alpha\}_{\alpha \geq \alpha'_0}\in \mathcal{B}$.  Since $(x_\alpha)_{\alpha \geq \alpha'_0}\goeseta 0$, there exists an increasing sequence $(\alpha'_n)$ in $A$ so that $\alpha'_1 \geq \alpha'_0$ for every $n\in \N$, and, if $\alpha_n\geq \alpha_n'$ for every $n\in \N$ then $x_{\alpha_n}\goeseta 0$. For any such sequence $(\alpha_n)$ we also have  $\{x_{\alpha_n}\}_{n\in \N}\subseteq \{x_{\alpha}\}_{\alpha \geq \alpha'_0}\in \mathcal{B}$ so that $\{x_{\alpha_n}\}_{n\in \N}\in \mathcal{B}$. Therefore $x_{\alpha_n}\goesetaB 0$ whenever $\alpha_n \geq \alpha_n'$ for all $n\in \N$.

(iii):~ A similar argument as in (ii) shows that $\eta_\mathcal{B}$ is Fr\'{e}chet-Urysohn whenever $\eta$ is.

(iv):~ Let $\eta$ be sequential and assume that $\mathcal{B}$ has a base $\mathcal{D}$ consisting of $\eta$-closed sets.  Suppose that $A\subseteq X$ is sequentially $\eta_\mathcal{B}$-closed.  Let $D \in\mathcal{D}$.  Since $\eta$ and $\eta_\mathcal{B}$ agree on members of $\mathcal{B}$, it follows that $A\cap D$ is sequentially $\eta$-closed in $D$.  But $D$ is $\eta$-closed, so $A\cap D$ is sequentially $\eta$-closed, hence $\eta$-closed, in $X$.  Consequently, $A\cap D$ is $\eta$-closed in $D$ so that $A$ is $\eta_\mathcal{B}$-closed by Proposition \ref{Prop:  Closed sets wrt bounded mod}.

(v):~ Suppose that $\eta$ is strongly first countable and $\mathcal{B}$ has a countable base, $\{B_n\}_{n\in\N}$.  It is easy to see that $\eta_\mathcal{B}$ is the final convergence structure on $X$ with respect to the inclusions $j_n:B_n\to X$, $n\in\N$.  Therefore $\eta_\mathcal{B}$ is strongly first countable.
\end{proof}

We now assume that $X$ is a vector space, that $\eta$ is a linear convergence structure on $X$, and that $\mathcal{B}$ is a linear bornology. In this case $\eta_{\mathcal{B}}$ is a linear convergence.  Denote by $\mathcal{N}_0$ the usual base of zero neighbourhoods in the scalar field.  A subset $B$ of $X$ is \term{bounded} if $[\mathcal{N}_0 B]\to 0$.  The collection of all bounded sets in $X$ is a vector bornology on $X$. We call $X$ \term{locally bounded} if it is locally $\mathcal{B}$-bounded, where $\mathcal{B}$ is the collection of all $\eta$-bounded sets. This means that every convergent net $x_\alpha \to 0$ has a bounded tail; equivalently, if every filter $\mathcal{F}\to 0$ contains a bounded set.

\begin{prop}\label{muB-fc0}
Let $\eta$ be a locally bounded linear convergence structure on a vector space $X$ and let $\mathcal{B}$ be the bornology of all $\eta$-bounded sets.  Then $\eta$ is strongly first countable if and only if there exists a (increasing) sequence $(B_n)$ in $\mathcal{B}$ such that $\left\{mB_{n}\right\}_{m,n\in\N}$ is a base of $\mathcal{B}$, so in particular $X=\mathrm{span}\bigcup B_{n}$, and the restriction of $\eta$ to $\mathrm{span}B_{n}$ is strongly first countable for every $n\in \N$.
\end{prop}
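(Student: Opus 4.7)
The plan is to argue both directions by comparing $\eta$ with the final convergence structure on $X$ induced by the inclusions $j_n\colon Y_n\hookrightarrow X$, where $Y_n\defeq \spa B_n$ carries the restriction of $\eta$.

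For sufficiency, assume $(B_n)$ satisfies the stated properties and let $\mu$ be the final convergence structure on $X$ with respect to the inclusions $\{j_n\}_{n\in\N}$. Since the index set is countable, each $j_n$ is injective, and each $(Y_n, \eta|_{Y_n})$ is strongly first countable, \cite[Proposition 1.6.7(iii)]{Beattie:02} gives that $\mu$ is strongly first countable. Each $j_n$ is $\eta$-continuous by definition of the subspace convergence, so $\eta \le \mu$. For the reverse, take $x_\alpha \goeseta x$. Local boundedness yields $\alpha_0$ with $T\defeq\{x_\alpha\}_{\alpha\ge\alpha_0}\in \mathcal{B}$; as $\mathcal{B}$ is a vector bornology (closed under finite unions and containing singletons), $\{x\}\cup T$ is bounded and hence contained in some $mB_n\subseteq Y_n$. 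The tail thus converges to $x$ in $\eta|_{Y_n}$ and so in $\mu$ via $j_n$, and since a net and any of its tails share the same tail filter, $x_\alpha\goesmu x$. This gives $\eta\ge \mu$, proving $\eta=\mu$ is strongly first countable.

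For necessity, Proposition \ref{prop:  Existence of convergent directed fixed local basis} applied at $0$ yields a countable intersection-closed local basis $\mathcal{H}_0$ at $0$. Local $\mathcal{B}$-boundedness together with the observation in Section \ref{Sect:Bornological} gives that $\mathcal{H}_0\cap \mathcal{B}$ is again a local basis; it remains intersection-closed because $\mathcal{B}$ is closed under finite intersections. After this replacement we may take $\mathcal{H}_0=\{H_n\}_{n\in\N}$ to consist of bounded sets. The main technical step is to show that the integer-scaled family $\{mH_n\}_{m,n\in\N}$ is a base of $\mathcal{B}$: for any $D\in \mathcal{B}$ the filter $[\mathcal{N}_0 D]$ converges to $0$, so the local basis property produces $H_k\in [\mathcal{N}_0 D]$, hence $VD\subseteq H_k$ for some scalar zero-neighborhood $V$; since $V$ contains a ball $\{|\lambda|<r\}$ around $0$, taking $\lambda=1/t$ gives $D\subseteq tH_k$ for every real $t>1/r$, and in particular for some natural number $m$. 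Enumerating $\{mH_n\}_{m,n}$ as $\{C_j\}_{j\in\N}$ and setting $B_n\defeq\bigcup_{j\le n}C_j$ produces an increasing sequence in $\mathcal{B}$ whose natural-number scalings still form a base of $\mathcal{B}$. Finally, $X=\spa\bigcup_n B_n$ follows from the base property applied to each bounded singleton $\{x\}$, and each $\spa B_n$ is strongly first countable as a subspace of $(X,\eta)$ by \cite[Corollary 1.6.8]{Beattie:02}. The subtle point is the absorption argument in necessity: a priori the scalar $1/\varepsilon$ produced from $\varepsilon D\subseteq H_k$ is real, and the crux is that zero-neighborhoods in the scalar field always contain full open balls, which lets that scalar be absorbed into a natural number.
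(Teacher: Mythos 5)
Your proposal is correct and follows essentially the same route as the paper: for necessity, extract the bounded members of a countable intersection-closed local basis at $0$ and use the scalar-absorption argument to see that their integer multiples form a base of $\mathcal{B}$, with strong first countability of each $\mathrm{span}B_n$ inherited as a subspace; for sufficiency, identify $\eta$ with the final convergence structure induced by the countably many injective inclusions $j_n\colon\mathrm{span}B_n\to X$ and invoke the permanence result from \cite{Beattie:02}. The only cosmetic difference is that you assemble the $B_n$ as finite unions of the sets $mH_k$ rather than enumerating $\mathcal{B}\cap\mathcal{H}$ directly, which changes nothing of substance.
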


\begin{proof}
Suppose that $\eta$ is strongly first countable and let $\mathcal H$ be a countable, intersection-closed local basis for $\eta$ at $0$. Enumerate $\mathcal{B}\cap\mathcal H$ as $\{B_n\}_{n\in\N}$.  Replacing $B_{n}$ with $B_{1}+...+B_{n}$ if needed we may assume that $(B_n)$ is increasing.

We claim that $\left\{mB_{n}\right\}_{m,n\in\N}$ is a base for $\mathcal{B}$.  Fix $B\in\mathcal{B}$.  Since $\left[\mathcal{N}_{0}B\right]\goeseta 0$, $\left[\left[\mathcal{N}_{0}B\right]\cap\mathcal{H}\right]\goeseta 0$.  As $\eta$ is locally bounded there exists $A\in \left[\mathcal{N}_{0}B\right]\cap\mathcal{H}\cap \mathcal{B}$.  The fact that $A\in \mathcal{H}\cap \mathcal{B}$ implies that there exists $n\in\N$ so that $A\subseteq B_{n}$, and since $A\in \left[\mathcal{N}_{0}B\right]$, there exists $m\in\N$ so that $\frac{1}{m}B\subseteq A$.  We conclude that $B\subseteq mB_{n}$, so $\left\{mB_{n}\right\}_{m,n\in\N}$ is a base for $\mathcal{B}$.

Since a subspace of a strongly first countable convergence space is strongly first countable, it follows that the restriction of $\eta$ to $\mathrm{span}B_{n}$ is strongly first countable for every $n\in\N$.\medskip

We now prove the converse.  For every $n$, let $j_n\colon \mathrm{span}B_{n}\to X$ be the inclusion map; it is clearly $(\eta_{|\mathrm{span}B_{n}})$-to-$\eta$ continuous.  To complete the proof, it suffices to show that $\eta$ is the final convergence structure with respect to $\{j_n\}_{n\in\N}$.  To see that this is so, let $\lambda$ be a convergence structure on $X$ such that each $j_n$ is $(\eta_{|\mathrm{span}B_{n}})$-to-$\lambda$ continuous.  Let $x_\alpha\goeseta x$. Since $\eta$ is locally bounded we may assume, after passing to a tail of $(x_\alpha)$ if necessary, that $(x_\alpha)$ is bounded.  It follows that there exists $n\in\mathbb N$ such that $(x_\alpha)$ and $x$ are contained in $\mathrm{span}B_{n}$.  This yields $x_\alpha\xrightarrow{\eta_{|\mathrm{span}B_{n}}}x$ and, therefore, $x_\alpha\goesl x$. We conclude that $\eta\ge\lambda$; this proves the claim.
\end{proof}

Let $X$ be a vector space and $\mathcal{B}$ a vector bornology on $X$.  We define the \term{bornological convergence} structure induced by $\mathcal B$ on $X$ as follows:  $x_\alpha\goesmuB 0$ if there exists (a circled) set $B\in\mathcal B$ such that for every $\varepsilon>0$ the set $\varepsilon B$ contains a tail of the net $(x_\alpha)$; we define $x_\alpha\goesmuB x$ if $x_\alpha-x\goesmuB 0$.  In terms of filters, $\mathcal F\goesmuB 0$ if $\mathcal F\supseteq[\mathcal N_0 B]$ for some $B\in\mathcal B$.  Clearly, the $\mu_\mathcal{B}$-bounded subsets of $X$ are precisely the members of $\mathcal{B}$; therefore $\mu_{\mathcal{B}}$ is locally bounded.

\begin{prop}\label{muB-fc}
Let $\mathcal B$ be a linear bornology on a vector space $X$. Then $\mu_{\mathcal B}$ is first countable. It is strongly first countable if and only if $\mathcal B$ has a countable base.
\end{prop}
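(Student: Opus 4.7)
The statement has two parts: unconditional first countability of $\mu_{\mathcal{B}}$, and the characterisation of strong first countability by countability of a base for $\mathcal{B}$. My plan is to work directly from the defining description $\mathcal{F}\goesmuB 0\iff\mathcal{F}\supseteq[\mathcal{N}_{0}B]$ for some (circled) $B\in\mathcal{B}$, and to leverage the fact that $\mathcal{N}_{0}$ in the scalar field has a countable base $\{(-1/n,1/n)\}_{n\in\N}$, which gets pulled down to $X$ by scaling.

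For first countability, given an arbitrary $\mathcal{F}\goesmuB 0$ I would pick a circled $B\in\mathcal{B}$ with $[\mathcal{N}_{0}B]\subseteq\mathcal{F}$ and set $\mathcal{G}\defeq[\mathcal{N}_{0}B]$. Then $\{(1/n)B\}_{n\in\N}$ is a countable base for $\mathcal{G}$, and $\mathcal{G}\goesmuB 0$ by the very definition of $\mu_{\mathcal{B}}$; this is exactly the data needed for first countability.

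For the sufficiency in strong first countability, suppose $\mathcal{B}$ has a countable base $\{B_{n}\}_{n\in\N}$ of circled sets. I propose the countable collection $\mathcal{H}\defeq\{(1/m)B_{n}\mid m,n\in\N\}$ as a local basis at $0$. Given $\mathcal{F}\goesmuB 0$, choose a circled $B\in\mathcal{B}$ with $[\mathcal{N}_{0}B]\subseteq\mathcal{F}$ and $n$ with $B\subseteq B_{n}$; then every $(1/m)B_{n}\supseteq(1/m)B$ lies in $[\mathcal{N}_{0}B]\subseteq\mathcal{F}$, and circledness of $B_{n}$ makes $\{(1/m)B_{n}\}_{m\in\N}$ a filter base in $\mathcal{F}\cap\mathcal{H}$ generating $[\mathcal{N}_{0}B_{n}]\goesmuB 0$. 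For the necessity, I would invoke Proposition \ref{muB-fc0}: $\mu_{\mathcal{B}}$ is locally bounded with bornology precisely $\mathcal{B}$, as noted directly after the definition of $\mu_{\mathcal{B}}$, so strong first countability yields a sequence $(B_{n})\subseteq\mathcal{B}$ for which $\{mB_{n}\}_{m,n\in\N}$ is a base of $\mathcal{B}$; in particular $\mathcal{B}$ has a countable base.

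No step presents a serious obstacle; the one piece of care needed throughout is the consistent use of circledness, which is what ensures that the collections $\{(1/m)B_{n}\}_{m\in\N}$ are honestly downward-directed and that the filter they generate coincides with $[\mathcal{N}_{0}B_{n}]$.
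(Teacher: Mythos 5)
Your proposal is correct. The first countability argument and the necessity direction (via Proposition \ref{muB-fc0} and the remark that the $\mu_{\mathcal B}$-bounded sets are exactly the members of $\mathcal B$) are essentially identical to the paper's. Where you genuinely diverge is the sufficiency direction: the paper introduces auxiliary convergence structures $\lambda_n$ on $\Span B_n$, shows each is strongly first countable, proves that $\mu_{\mathcal B}$ is the final convergence structure with respect to the inclusions $j_n\colon(\Span B_n,\lambda_n)\to(X,\mu_{\mathcal B})$, and then invokes the permanence result \cite[Proposition 1.6.7(iii)]{Beattie:02} for countable families of injective maps. You instead verify the definition directly, exhibiting the countable collection $\mathcal H=\{\tfrac1m B_n\mid m,n\in\N\}$ and, for each $\mathcal F\goesmuB 0$, the filter base $\{\tfrac1m B_n\}_{m\in\N}\subseteq\mathcal F\cap\mathcal H$ generating $[\mathcal N_0 B_n]\goesmuB 0$; this uses the reduction to $x=0$ for linear convergences, which the paper records after the definition of strong first countability. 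Your route is more elementary and self-contained (no appeal to final structures or to the external permanence result), while the paper's route recycles the machinery already set up for Proposition \ref{muB-fc0} and illustrates the final-structure technique used repeatedly later. One small point to make explicit: you assume the countable base $\{B_n\}$ consists of circled sets; this is harmless, since a vector bornology is stable under circled hulls, so replacing each $B_n$ by its circled hull yields a countable circled base, but the reduction should be stated.
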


\begin{proof}
Suppose $\mathcal F\goesmuB 0$. Then there exists a circled set $A\in\mathcal B$ such that $\frac1n A\in\mathcal F$ for all $n\in\N$.  It follows that the filter generated by $\{\frac1n A\mid n\in\mathbb N\}$ is contained in $\mathcal F$ and $\mu_{\mathcal B}$-converges to zero.  Hence $\mu_{\mathcal B}$ is first countable.

If $\mu_{\mathcal B}$ is strongly first countable, then $\mathcal{B}$ has a countable base by Proposition \ref{muB-fc0}.  Suppose that $\mathcal B$ has a countable base, $\{B_n\mid n\in\mathbb N\}$. For every $n\in \N$ we define a convergence structure $\lambda_n$ on $\Span B_n$ as follows: $x_\alpha\xrightarrow{\lambda_n}x$ if for every $\varepsilon>0$ there exists $\alpha_0$ such that $x_\alpha-x\in\varepsilon B_n$ for all $\alpha\ge\alpha_0$.  Observe that $\lambda_n$ is strongly first countable for every $n$:  For every $x\in X$, the collection $\{x+\frac1m B_n\mid m\in\mathbb N\}$ is a local base for $\lambda_N$ at $x$.

A variation on the argument for the corresponding claim in the proof of Proposition \ref{muB-fc0} shows that $\mu_{\mathcal B}$ is the final convergence structure for the family of formal inclusions $j_n\colon(\Span B_n,\lambda_n)\to(X,\mu_{\mathcal B})$.  It follows that $(X,\mu_{\mathcal B})$ is strongly first countable.
\end{proof}

We refer the reader to \cite[Sections 3.7 \& 3.8]{Beattie:02} and \cite[Sections 6, 7 \& 8]{Bilokopytov:24} for more details on bornologies, bounded sets in convergence vector spaces, and the bounded modification of a convergence vector space.

\section{Locally solid convergence structures}\label{Sec:  Locally solid}

We now turn to (strong) first countability of general locally solid convergence spaces.  Throughout this section $X$ denotes an Archimedean vector lattice.  Recall that a subset $A$ of $X$ is \term{solid} if for all $x\in A$ and $y\in X$, if $|y|\le |x|$ then $y\in A$.  The \term{solid hull} of $A$ is the smallest solid subset $\Sol(A)$ of $A$ containing $A$.  The smallest solid linear subspace containing $A$ is called the \term{ideal} generated by $A$, and denoted $I_A$; if $u\in X$ we write $I_u$ or the ideal generated by $\{u\}$.  For a set $A\subseteq X$ we denote by $A^\wedge$ the set consisting of infima of finite subsets of $X$; likewise, $A^\vee$ consist of suprema of finite subsets of $A$.  We refer the reader to \cite{Aliprantis:03,Aliprantis:06,Luxemburg:71} for undeclared notation and terminology.

A \term{locally solid} convergence structure on $X$ is a linear convergence structure so that if $x_\alpha \to 0$ and $|y_\alpha|\le |x_\alpha|$ for all $\alpha$, then $y_\alpha \to 0$.  If $X$ is equipped with a locally solid convergence structure $\eta$, we call the pair $(X,\eta)$, or just $X$, a locally solid convergence space.  The following result from \cite{Bilokopytov:24} gives some useful characterisations of a locally solid convergence space.

\begin{thm}\label{def-ls}
Let $\lambda$ be a linear convergence structure on $X$.  The following are equivalent. \begin{enumerate}[(i)]

    \item\label{def-ls-ls} $\lambda$ is locally solid;

    \item\label{def-ls-fil} If $\mathcal F\to 0$ then there is a filter $\mathcal G$ with a base of solid sets such that $\mathcal G\subseteq\mathcal F$ and $\mathcal G\to 0$;

    \item\label{def-ls-fil-hull} If $\mathcal F\to 0$ then $\Sol(\mathcal F)\defeq [\{\Sol(F) \mid F\in\mathcal{F}\}]\to 0$;

    \item\label{def-ls-net-dom} If $y_\gamma\to 0$ and for every $\gamma_0$ there exists $\alpha_0$ such that for every $\alpha\ge\alpha_0$ there exists $\gamma\ge\gamma_0$ with $\abs{x_\alpha}\le\abs{y_\gamma}$, then $x_\alpha\to 0$.
  \end{enumerate}
\end{thm}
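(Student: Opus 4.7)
My plan is to establish the equivalences via the chain (i) $\Rightarrow$ (iii) $\Rightarrow$ (ii) $\Rightarrow$ (i), combined with (iii) $\Leftrightarrow$ (iv). Throughout I exploit the fact that filter convergence in a convergence space is upward closed: if $\mathcal{G} \to x$ and $\mathcal{G} \subseteq \mathcal{F}$ then $\mathcal{F} \to x$ (the filter analogue of the quasi-subnet axiom). This reduces the easier implications to comparing filters by inclusion.

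The implication (iii) $\Rightarrow$ (ii) is immediate, since $\Sol(\mathcal{F})$ has a base of solid sets by definition and $\Sol(\mathcal{F}) \subseteq \mathcal{F}$ (because $F \subseteq \Sol(F)$, so each $\Sol(F)$ already lies in $\mathcal{F}$). For (ii) $\Rightarrow$ (i), given $x_\alpha \to 0$ and $\abs{y_\alpha} \le \abs{x_\alpha}$, I would apply (ii) to $[x_\alpha]$ to get a solid-based $\mathcal{G} \subseteq [x_\alpha]$ with $\mathcal{G} \to 0$; any solid $G \in \mathcal{G}$ contains a tail $\{x_\alpha\}_{\alpha \ge \alpha_0}$, whence solidity forces $\{y_\alpha\}_{\alpha \ge \alpha_0} \subseteq G$, so $\mathcal{G} \subseteq [y_\alpha]$ and upward closure gives $y_\alpha \to 0$. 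For (iii) $\Leftrightarrow$ (iv) I would rewrite the hypothesis of (iv) in filter language: every tail $T_{\gamma_0}$ of $(y_\gamma)$ has solid hull $\Sol(T_{\gamma_0})$ containing some tail of $(x_\alpha)$, which is exactly the assertion $\Sol([y_\gamma]) \subseteq [x_\alpha]$. Given (iii), $\Sol([y_\gamma]) \to 0$ and upward closure yields $x_\alpha \to 0$. Conversely, (iv) $\Rightarrow$ (i) follows by taking $\Gamma = A$ in (iv) and choosing $\gamma = \alpha$, which matches (i)'s hypothesis verbatim.

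The substantive step is (i) $\Rightarrow$ (iii). Given $\mathcal{F} \to 0$, I would pick a representing net $(x_\alpha)$ and build a companion net on the index set $B \defeq \{(\alpha, z) \mid \alpha \in A,\ z \in X,\ \abs{z} \le \abs{x_\alpha}\}$, ordered by $(\alpha, z) \le (\alpha', z')$ iff $\alpha \le \alpha'$, setting $z_{(\alpha, z)} \defeq z$ and $y_{(\alpha, z)} \defeq x_\alpha$. The tails of $(z_\beta)$ coincide with the solid hulls $\Sol(\{x_\alpha\}_{\alpha \ge \alpha_0})$, so $[z_\beta] = \Sol(\mathcal{F})$; meanwhile the tails of $(y_\beta)$ are precisely the tails of $(x_\alpha)$ (using that $z = 0$ is always an admissible second coordinate), so $y_\beta \to 0$. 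Since $\abs{z_\beta} \le \abs{y_\beta}$ by construction, local solidity yields $z_\beta \to 0$, i.e., $\Sol(\mathcal{F}) \to 0$. The main points to verify are that $B$ is upward directed and that the tail filter of $(z_\beta)$ really equals $\Sol(\mathcal{F})$ (rather than something smaller); the principal obstacle is simply keeping the net/filter translation straight across the four parallel formulations.
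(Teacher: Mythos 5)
Your proposal is correct. Note that this paper does not actually prove Theorem \ref{def-ls} — it is quoted from the reference on locally solid convergence structures — so there is no internal proof to compare against; judged on its own, your argument is sound. The easy implications (iii)$\Rightarrow$(ii), (ii)$\Rightarrow$(i), (iii)$\Leftrightarrow$(iv) are handled exactly as they should be via upward closedness of filter convergence (i.e.\ the quasi-subnet axiom), and your key construction for (i)$\Rightarrow$(iii) — the doubled index set $B=\{(\alpha,z)\mid \abs{z}\le\abs{x_\alpha}\}$ ordered by the first coordinate, with directedness secured by admitting $z=0$ and with the tails of $(z_\beta)$ being precisely the solid hulls $\Sol(\{x_\alpha\}_{\alpha\ge\alpha_0})$, so that $[z_\beta]=\Sol(\mathcal F)$ while $[y_\beta]=[x_\alpha]$ — is exactly the device needed to apply the definition of local solidity to two nets over a common index set.
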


In \cite{Aydin:21}, the concept of a full lattice convergence was introduced, using their definition of a convergence:  A linear convergence structure $\eta$ on a vector lattice $X$ is \term{locally full} if $0\le y_\alpha \le x_\alpha$ for every $\alpha$ and $x_\alpha \goeseta 0$ imply $y_\alpha \goeseta 0$.  As is shown in \cite[Proposition 3.3]{Bilokopytov:24}, a linear convergence structure is locally solid if and only if it is locally full and the modulus operation is continuous.  Therefore, a linear convergence structure on $X$ is locally solid if and only if it is a full lattice convergence as defined in \cite[Definition 2.1]{Aydin:21}.

\begin{rem}\label{Rem: Locally full}
We note that a linear convergence structure $\eta$ is locally full if and only if the following holds:  For all nets $(x_\alpha)$ and $y_\gamma \goeseta 0$ in $X_+$, if for every $\gamma_0$ there exists $\alpha_0$ such that for every $\alpha\ge\alpha_0$ there exists $\gamma\ge\gamma_0$ with $x_\alpha\le y_\gamma$, then $x_\alpha\to 0$.  This follows from a similar argument to that in the proof of \cite[Theorem 3.1]{Bilokopytov:24}. \qed
\end{rem}

Note that if $\eta$ is a locally solid convergence structure on $X$ then, as a consequence of Theorem \ref{def-ls},  if $\mathcal{H}$ is a local basis for $\eta$ at $0$, then $\mathcal{H}\cap \mathcal{S}$ is also a local basis, where $\mathcal{S}$ is the collection of all solid sets in $X$.  We may therefore always choose a local basis for $\eta$ at $0$ which consists of solid sets.\medskip

We call a locally solid convergence structure on $X$ \term{locally order bounded} if every convergent net has an order bounded tail; equivalently, every convergent filter contains an order bounded set.  For such a convergence structure, every bounded set is in fact order bounded.  Using a filter argument and the continuity of scalar multiplication, it is easy to see that converse is also true.  Hence, in a locally order bounded locally solid convergence space, the bounded sets are precisely the order bounded sets.  As every order bounded set is contained in an interval of the form $\left[-u,u\right]$ for some $u\ge0$ and $\mathrm{span}\left[-u,u\right]=I_{u}$, we arrive at the following consequence of Proposition \ref{muB-fc0}.

\begin{cor}\label{first-c-lemma}
Let $(X,\eta)$ be a locally solid, locally order bounded convergence space.  Then $X$ is strongly first countable if and only if there exists an increasing sequence $(v_n)$ in $X_+$ such that $X=I_{\{v_n\}}$ and the restriction of $\eta$ to $I_{v_n}$ is strongly first countable for every $n\in\N$.
\end{cor}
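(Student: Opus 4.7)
The plan is to derive this corollary as a specialization of Proposition~\ref{muB-fc0} to the bornology $\mathcal{B}$ of all $\eta$-bounded sets. The paragraph preceding the statement already notes that in a locally solid, locally order bounded convergence space the bounded sets coincide with the order bounded sets; in particular $\eta$ is locally bounded, so Proposition~\ref{muB-fc0} applies, and every $B\in\mathcal{B}$ sits inside some order interval $[-u,u]$ with $u\in X_+$. The task is to translate the bornological criterion of Proposition~\ref{muB-fc0}, namely the existence of an increasing sequence $(B_n)$ in $\mathcal{B}$ with $\{mB_n\}_{m,n\in\N}$ a base of $\mathcal{B}$ and each $\eta|_{\mathrm{span}\,B_n}$ strongly first countable, into the order-theoretic statement involving a sequence $(v_n)$ in $X_+$.

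For necessity, I would apply Proposition~\ref{muB-fc0} to obtain a sequence $(B_n)$ as above. For each $n$ choose $v_n\in X_+$ with $B_n\subseteq[-v_n,v_n]$, and after replacing $v_n$ by $v_1\vee\cdots\vee v_n$ assume $(v_n)$ is increasing. Then $\mathrm{span}\,B_n\subseteq I_{v_n}$, and since $(v_n)$ is increasing, $\bigcup_n I_{v_n}=I_{\{v_n\}}$, yielding
\[
X=\bigcup_n\mathrm{span}\,B_n\subseteq\bigcup_n I_{v_n}=I_{\{v_n\}}\subseteq X,
\]
so $X=I_{\{v_n\}}$. Each $I_{v_n}$ is a linear subspace, so by \cite[Corollary 1.6.8]{Beattie:02} the restriction $\eta|_{I_{v_n}}$ is strongly first countable for every $n\in\N$.

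For sufficiency, given an increasing sequence $(v_n)$ with the stated properties, set $B_n\defeq[-v_n,v_n]$. Each $B_n$ is order bounded, hence bounded, and $\mathrm{span}\,B_n=I_{v_n}$, so $\eta|_{\mathrm{span}\,B_n}$ is strongly first countable by hypothesis. To invoke Proposition~\ref{muB-fc0} it remains to verify that $\{mB_n\}_{m,n\in\N}$ is a base for $\mathcal{B}$. Any $B\in\mathcal{B}$ is order bounded, so $B\subseteq[-u,u]$ for some $u\in X_+=(I_{\{v_n\}})_+$; expressing $u$ as being dominated by a positive linear combination of the $v_n$'s and exploiting that $(v_n)$ is increasing, one finds $m,n\in\N$ with $u\le mv_n$, whence $B\subseteq[-u,u]\subseteq mB_n$. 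The main (if modest) obstacle is exactly this last step of collapsing a finite positive combination of $v_n$'s into a single multiple of some $v_n$, and the monotonicity of $(v_n)$ is precisely what makes this possible; everything else is bookkeeping between the bornological and order-theoretic formulations.
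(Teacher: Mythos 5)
Your proposal is correct and follows exactly the paper's route: the paper derives the corollary from Proposition~\ref{muB-fc0} via the observations in the preceding paragraph (in a locally solid, locally order bounded space the bounded sets are precisely the order bounded sets, each contained in some $[-u,u]$ with $\mathrm{span}[-u,u]=I_u$), which is precisely the translation you carry out. Your bookkeeping in both directions (choosing $v_n$ dominating $B_n$, respectively taking $B_n=[-v_n,v_n]$ and collapsing a finite combination of the $v_n$'s into a single multiple $mv_N$ using monotonicity) is sound.
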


\begin{rem}\label{rem-strong}
Suppose that $X$ admits a completely metrizable locally solid topology.  Then there exists an increasing sequence $(v_n)$ in $X_+$ so that $X=I_{\{v_n\}}=\bigcup_{i=1}^\infty I_{v_n}$ if and only if $X$ has a strong unit.  Indeed, by appropriately scaling the sequence $(v_n)$ if necessary, we may assume that $\sum v_n$ is convergent with respect to the metric topology.  Then $e\defeq \sum v_n$ is a strong unit since $v_n\le e$ for every $n\in \N$.  The converse is trivial. \qed
\end{rem}

With any locally solid convergence structure $\eta$ on $X$ we associate its \term{unbounded modification} $\mathrm{u}\eta$ as follows:  $x_\alpha \goesueta x$ if $y\wedge|x-x_\alpha| \goeseta 0$ for all $y\ge 0$.  More generally, if $I$ is an ideal in $X$ we can unbound $\eta$ with respect to $I$:  $x_\alpha \goesuIe x$ if $y\wedge|x-x_\alpha| \goeseta 0$ for all $0\le y\in I$.  For any ideal $I$, the unbound modification ${\rm u}_I\eta$ is a locally solid convergence structure on $X$, see \cite[Section 4]{Bilokopytov:24}.  Next, we present a condition that guarantees that (strong) first countability passes to the unbounded modification.

\begin{prop}\label{fsc-unbdd}
Let $\eta$ be a locally solid convergence on $X$, $I$ an ideal in $X$, and $(v_n)$ a sequence in $I_+$ such that $I=I_{\{v_n\}}$.  \begin{enumerate}[(i)]

    \item If $\eta$ is first countable or strongly first countable then so is ${\rm u}_I\eta$.

    \item If $\eta$ is weakly first countable, then so is ${\rm u}_I\eta$.


\end{enumerate}

\end{prop}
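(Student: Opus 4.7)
Our plan hinges on the following preliminary reduction. Since $I=I_{\{v_n\}}$, every $y\in I_+$ satisfies $y\le c(v_1+\cdots+v_N)$ for suitable $c>0$ and $N\in\N$; combining this with the standard lattice inequality $(v_1+\cdots+v_N)\wedge|x_\alpha|\le\sum_{i=1}^N(v_i\wedge|x_\alpha|)$, linearity of $\eta$, and local fullness (Theorem~\ref{def-ls} and Remark~\ref{Rem: Locally full}), we obtain
\[
x_\alpha\goesuIe 0 \iff v_n\wedge|x_\alpha|\goeseta 0 \text{ for every }n\in\N.
\]
Writing $T_n(z)\defeq v_n\wedge|z|$, the filter version reads: $\mathcal F\goesuIe 0$ if and only if $T_n(\mathcal F)\goeseta 0$ in $\eta$ for every $n$. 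This is the main conceptual point; the remainder is a routine exercise in pull-backs and diagonalisation.

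For (i), assume first that $\eta$ is first countable and let $\mathcal F\goesuIe 0$. For each $n$, use first countability of $\eta$ to choose $\mathcal G_n\subseteq T_n(\mathcal F)$ with countable base $\{G_n^k\}_{k\in\N}$ satisfying $\mathcal G_n\goeseta 0$, and for each pair $(n,k)$ pick $F_n^k\in\mathcal F$ with $T_n(F_n^k)\subseteq G_n^k$. The filter $\mathcal G$ on $X$ generated by the countable collection $\{F_n^k:n,k\in\N\}$ is contained in $\mathcal F$, has a countable base, and satisfies $T_n(\mathcal G)\supseteq\mathcal G_n\goeseta 0$ for every $n$, hence $\mathcal G\goesuIe 0$ by the reduction. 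If $\eta$ is strongly first countable, fix a countable intersection-closed local basis $\mathcal H$ for $\eta$ at $0$ (Proposition~\ref{prop:  Existence of convergent directed fixed local basis}) and set
\[
\mathcal H'_0\defeq\Bigl\{\bigcap_{i=1}^k T_{n_i}^{-1}(H_i)~:~k,n_i\in\N,\ H_i\in\mathcal H\Bigr\}.
\]
This is a countable, intersection-closed family of subsets of $X$ each containing $0$. The same pull-back argument shows that for any $\mathcal F\goesuIe 0$ and any $n$, each $H\in\mathcal H\cap T_n(\mathcal F)$ yields $T_n^{-1}(H)\in\mathcal F\cap\mathcal H'_0$, whence $T_n([\mathcal H'_0\cap\mathcal F])\supseteq[\mathcal H\cap T_n(\mathcal F)]\goeseta 0$. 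Thus $\mathcal H'_0$ satisfies all four conditions of Proposition~\ref{prop:  Existence of convergent directed fixed local basis} relative to $u_I\eta$, so $u_I\eta$ is strongly first countable.

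For (ii), suppose $\eta$ is weakly first countable and $x_\alpha\goesuIe 0$. Then $v_n\wedge|x_\alpha|\goeseta 0$ for each $n$, so weak first countability of $\eta$ furnishes an increasing sequence $(\alpha_m^n)_m$ in $A$ with the property that $\alpha_m\ge\alpha_m^n$ for every $m\in\N$ implies $v_n\wedge|x_{\alpha_m}|\goeseta 0$. Using upward directedness of $A$, pick an increasing sequence $(\beta_m)_m$ with $\beta_m\ge\alpha_m^k$ for all $k\le m$. Suppose $\alpha_m\ge\beta_m$ for every $m$, and fix $n\in\N$. Then $\alpha_m\ge\alpha_m^n$ holds for all $m\ge n$. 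Setting $\alpha_m'\defeq\alpha_m$ for $m\ge n$ and $\alpha_m'\defeq\alpha_m^n$ for $m<n$, we have $\alpha_m'\ge\alpha_m^n$ for every $m$, so $v_n\wedge|x_{\alpha_m'}|\goeseta 0$; since $(x_{\alpha_m})$ and $(x_{\alpha_m'})$ coincide from index $n$ onward they share the same tail filter, so $v_n\wedge|x_{\alpha_m}|\goeseta 0$ as well. As $n$ was arbitrary, $x_{\alpha_m}\goesuIe 0$, proving that $u_I\eta$ is weakly first countable.

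The main obstacle is the lattice-theoretic reduction in the first paragraph; once it is established, the remaining arguments mirror the familiar proofs for initial convergence structures and the permanence results of Section~\ref{Sec:  Sequential}.
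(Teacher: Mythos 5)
Your proof is correct and follows essentially the same route as the paper's: your reduction $x_\alpha\goesuIe 0 \iff v_n\wedge|x_\alpha|\goeseta 0$ for all $n$ is exactly the fact the paper invokes from its reference (Remark 4.6 there), and your pulled-back sets $T_n^{-1}(H)$ together with their finite intersections coincide with the paper's sets $\widetilde{H}^k$ and the filters it builds from them, while your diagonalisation in (ii) matches the paper's argument (with the harmless finite-modification of indices made explicit). The only cosmetic difference is that the paper additionally passes to solid hulls/solid basis sets, which your image-filter formulation renders unnecessary.
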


\begin{proof}
(i):~ Suppose first that $\eta$ is first countable, and let
  \begin{math}
    \mathcal F\goesuIe 0.
  \end{math}
We need to show that there exists a filter $\mathcal G$ with a countable base such that $\mathcal G\subseteq\mathcal F$ and
  \begin{math}
    \mathcal G\goesuIe 0.
  \end{math}
Since $\mathrm{u}_I\eta$ is locally solid we may assume by Theorem~\ref{def-ls}\eqref{def-ls-fil-hull}, replacing $\mathcal F$ with $\Sol(\mathcal F)$ if needed, that $\mathcal F$ has a base of solid sets. Let $n\in\mathbb N$. Since $\abs{\mathcal F}\wedge v_n\goeseta 0$ by the definition of ${\rm u}_I\eta$, there exists a countable filter base $\mathcal B_n\subseteq\abs{\mathcal F}\wedge v_n$ such that $[\mathcal B_n]\goeseta 0$.  Enumerate $\mathcal B_n$ as $\{B_{n,k}\mid k\in\mathbb N\}$. For every $k$, it follows from $B_{n,k}\in\abs{\mathcal F}\wedge v_n$ that there exists a solid set $A_{n,k}\in\mathcal F$ such that $\abs{A_{n,k}}\wedge v_n\subseteq B_{n,k}$.  Consider the set $\mathcal A\subseteq\mathcal F$ of all finite intersections of sets in $\{A_{n,k}\mid n,k\in\mathbb N\}$. Clearly, $\mathcal A$ is countable, consists of solid sets, is contained in $\mathcal F$, and is closed under finite intersections. Hence, it is a filter base.  Let $\mathcal G \defeq [\mathcal A]$. Then $\mathcal G\subseteq\mathcal F$.

For every $n\in\N$, $[\mathcal B_n]\subseteq\abs{\mathcal G}\wedge v_n$ so that $\abs{\mathcal G}\wedge v_n\goeseta 0$. By \cite[Remark 4.6]{Bilokopytov:24}, we conclude that
  \begin{math}
    \mathcal G\goesuIe 0.
  \end{math}
Hence ${\rm u}_I\eta$ is first countable.\medskip

Assume that $\eta$ is strongly first countable. Let $\mathcal H$ be a countable intersection-closed local basis for $\eta$ at $0$.  By the comment after Theorem \ref{def-ls} we may assume that $\mathcal H$ consists of solid sets.  For $A\in\mathcal H$ and $k\in\mathbb N$, define
  \begin{math}
\widetilde{A}^k\defeq \bigl\{x\in X\mid \abs{x}\wedge v_k\in A\bigr\}.
  \end{math}
Let $\widetilde{\mathcal H}_0\defeq \{\widetilde{A}^k \mid A\in\mathcal{H}, k\in\N\}$, and let $\widetilde{\mathcal H}$ be the set of all finite intersections of sets in $\widetilde{H}_0$.  Clearly, $\widetilde{\mathcal H}$ is a countable collection of solid sets.  We show that it is a local basis for $\mathrm{u}_I\eta$.

Let $\mathcal F\goesuIe 0$. Fix $k\in\mathbb N$ and define
  \begin{math}
    \mathcal B_k\defeq \mathcal H\cap
     \bigl(\abs{\mathcal F}\wedge v_k\bigr).
  \end{math}
Since $\abs{\mathcal F}\wedge v_k\goeseta 0$, also $[\mathcal B_k]\goeseta 0$. We claim that $\widetilde{B}^k\in\mathcal F$ for every $B\in\mathcal B_k$.  Indeed, it follows from $B\in\abs{\mathcal F}\wedge v_k$ that there exists $A\in\mathcal F$ such that $\abs{A}\wedge v_k\subseteq B$; this yields $A\subseteq\widetilde{B}^k$ and, therefore, $\widetilde{B}^k\in\mathcal F$.

Let $\mathcal C\defeq \bigl\{\widetilde{B}^k\mid k\in\mathbb N, B\in\mathcal B_k\bigr\}$. It follows from $\mathcal C\subseteq\mathcal F$ that finite intersections of members of $\mathcal C$ are non-empty. Hence, the family $\mathcal D$ of all such intersections is a filter base and $\mathcal D\subseteq\mathcal F$. For every $k\in\mathbb N$ and $B\in\mathcal B_k$, we have $B\in\mathcal H$, hence $\widetilde{B}^k\in\widetilde{\mathcal H}_0$. It follows that $\mathcal D\subseteq\widetilde{\mathcal H}$. It remains to show that
  \begin{math}
    [\mathcal D]\goesuIe 0.
  \end{math}
As in the proof for first countability, it suffices to prove that $\bigabs{[\mathcal D]}\wedge v_k\goeseta 0$ for every $k\in N$.  This follows from the fact that $[\mathcal B_k]\goeseta 0$ and $[\mathcal B_k]\subseteq\bigabs{[\mathcal D]}\wedge v_k$.  Indeed, for every $B\in\mathcal B_k$, we have $\widetilde{B}^k\in\mathcal D$ and
  \begin{math}
    \abs{\widetilde{B}^k}\wedge v_k\subseteq B,
  \end{math}
so that $B\in\bigabs{[\mathcal D]}\wedge v_k$.

(ii):~ Suppose that $\eta$ is weakly first countable.  Let $x_\alpha \goesuIe 0$ so that $|x_\alpha|\wedge v_k \goeseta 0$ for every $k\in\N$.  For every $k\in \N$ there exists an increasing sequence $(\alpha_n^k)$ in $A$ so that if $\alpha_n\geq \alpha_n^k$ for every $n\in \N$, then $|x_{\alpha_n}|\wedge v_k \goeseta 0$.  Let $(\alpha_n')$ be an increasing sequence in $A$ so that for every $n\in \N$ and $k\leq n$, $\alpha_n^k \leq \alpha_n'$.  Consider a sequence $(\alpha_n)$ in $A$ so that $\alpha_n'\leq \alpha_n$ for all $n\in \N$.  For $k \in \N$, we have $\alpha_n^k\leq \alpha_n' \leq \alpha_n$ whenever $n\geq k$, so that $|x_{\alpha_n}|\wedge v_k \goeseta 0$.  Therefore, by \cite[Remark 4.6]{Bilokopytov:24}, $x_{\alpha_n}\goesuIe 0$; hence, ${\rm u}_I \eta$ is weakly first countable.
\end{proof}

\begin{rem}
If $\eta$ is Frech\'et-Urysohn and $I=I_{v}$ for some $v\in X_{+}$, then $x_\alpha \goesuIe x$ iff $v\wedge|x-x_\alpha| \goeseta 0$. It is then straightforward to show that ${\rm u}_I\eta$ is Frech\'et-Urysohn.

Assume that $\eta$ is idempotent, see \cite{Bilokopytov:23}, \cite{Bilokopytov:24}. Then, according to \cite[Proposition 3.1(v)]{Bilokopytov:23}, ${\rm u}_I\eta={\rm u}_{\overline{I}}\eta$.  In this case, if $\eta$ is Frech\'et-Urysohn and $I=\overline{I_{v}}$ for some $v\in X_+$, then ${\rm u}_I\eta$ is Frech\'et-Urysohn.

Moreover, under the assumption that $\eta$ is idempotent, Proposition \ref{fsc-unbdd} remains valid under the weaker assumption that $I=\overline{I_{\{v_n\}}}$.
\qed\end{rem}

The following result is a partial converse of Proposition \ref{fsc-unbdd}.

\begin{prop}\label{Porp: Ueta 1st countable implies eta 1st countable}
Let $\eta$ be a locally order bounded and locally solid convergence structure on $X$.  \begin{enumerate}[(i)]

    \item\label{first-ctb} If ${\rm u}\eta$ is first countable, then so is $\eta$.
    \item\label{Weakly-first-ctb} If ${\rm u}\eta$ is weakly first countable, then so is $\eta$.
    \item\label{Frechet-Urysohn} If ${\rm u}\eta$ is Fr\'{e}chet-Urysohn, then so is $\eta$.
    \item\label{Sequential} If ${\rm u}\eta$ is sequential and Hausdorff, then $\eta$ is sequential.
    \item\label{str-first-ctb} If ${\rm u}\eta$ is strongly first countable and there exists an increasing sequence $(v_n)$ in $X_+$ so that $X=I_{\{v_n\}}$, then $\eta$ is strongly first countable.

\end{enumerate}
\end{prop}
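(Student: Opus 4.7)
The plan is to reduce all five statements to the corresponding parts of Proposition \ref{Prop: Bdmod-firs-ctb}, applied to the convergence space $(X,{\rm u}\eta)$ together with the bornology $\mathcal{B}$ of order bounded subsets of $X$. The key identification driving everything is
\[
\eta=({\rm u}\eta)_\mathcal{B}.
\]
One inclusion is immediate: local order boundedness of $\eta$ gives every $\eta$-convergent net an order bounded tail, and ${\rm u}\eta\le\eta$ supplies the ${\rm u}\eta$-convergence. For the reverse inclusion, I would take a net $x_\alpha\goesueta x$ with a tail in $\mathcal{B}$; then a tail of $(x_\alpha-x)$ lies in some interval $[-v,v]$, so $|x_\alpha-x|\wedge v=|x_\alpha-x|$ eventually. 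The defining property of ${\rm u}\eta$ gives $|x_\alpha-x|\wedge v\goeseta 0$, hence $|x_\alpha-x|\goeseta 0$, and local solidness of $\eta$ concludes $x_\alpha\goeseta x$. Establishing this identification is the main conceptual step in the proof.

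Once the identification is in hand, parts \eqref{first-ctb}, \eqref{Weakly-first-ctb}, and \eqref{Frechet-Urysohn} follow instantly from the corresponding parts of Proposition \ref{Prop: Bdmod-firs-ctb}, with nothing further to check.

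For part \eqref{Sequential}, Proposition \ref{Prop: Bdmod-firs-ctb}\eqref{Prop: Bdmod-Sequential-Sequential} additionally requires a base of $\mathcal{B}$ consisting of ${\rm u}\eta$-closed sets; the natural candidate is $\{[-v,v] : v\in X_+\}$. The hard part here is to verify this closedness, and this is where Hausdorffness enters: if $y_\beta\goesueta y$ with $y_\beta\in[-v,v]$ for every $\beta$, then the constant net $|y_\beta|\vee v=v$ also ${\rm u}\eta$-converges to $|y|\vee v$ by continuity of the modulus and the lattice operations, and Hausdorffness of ${\rm u}\eta$ forces $|y|\vee v=v$, so $y\in[-v,v]$.

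Finally, for part \eqref{str-first-ctb}, Proposition \ref{Prop: Bdmod-firs-ctb}\eqref{Prop: Bdmod-firs-ctb str 1st c} needs $\mathcal{B}$ to have a countable base. Since $(v_n)$ is increasing with $X=I_{\{v_n\}}$, every $u\in X_+$ satisfies $u\le cv_n$ for some $c>0$ and $n\in\N$, so every order interval is contained in some $m[-v_n,v_n]$; hence $\{m[-v_n,v_n]\}_{m,n\in\N}$ is the desired countable base, and Proposition \ref{Prop: Bdmod-firs-ctb}\eqref{Prop: Bdmod-firs-ctb str 1st c} applies.
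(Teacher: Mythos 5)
Your proposal is correct and follows essentially the same route as the paper: the paper's proof is exactly the reduction $\eta=({\rm u}\eta)_{\mathcal B}$ (cited from the locally solid convergence structures paper) followed by an appeal to Proposition \ref{Prop: Bdmod-firs-ctb}, with Hausdorffness used to get a base of ${\rm u}\eta$-closed order intervals for part \eqref{Sequential}. The only difference is that you verify in-line the identification $\eta=({\rm u}\eta)_{\mathcal B}$, the ${\rm u}\eta$-closedness of the intervals $[-v,v]$, and the countable base $\{m[-v_n,v_n]\}$ for part \eqref{str-first-ctb}, all of which the paper delegates to citations or leaves implicit; your verifications are sound.
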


\begin{proof}
Let $\mathcal{B}$ be the bornology of all order bounded sets in $X$.  According to \cite[Proposition 8.28]{Bilokopytov:24}, $({\rm u}\eta)_{\mathcal{B}}=\eta$.  Therefore the result follows immediately from Proposition \ref{Prop: Bdmod-firs-ctb}.  For (\ref{Sequential}), it follows from \cite[Proposition 3.4]{Bilokopytov:24} and the assumption that ${\rm u}\eta$ is Hausdorff that $\mathcal{B}$ has a base of ${\rm u}\eta$-closed sets.
\end{proof}

Let $\eta$ be a linear convergence structure on $X$.  The \term{locally solid modification} $\mathrm{s}\eta$ of $\eta$ is defined as follows:  We declare $\mathcal{F}\goesseta 0$ if there exists $\mathcal{G}\goeseta 0$ with $X_+\in \mathcal{G}$ so that $\Sol(\mathcal{G})\subseteq \mathcal{F}$; in general, $\mathcal{F}\goesseta x$ if $\mathcal{F}-x\goesseta 0$.  In \cite[Definition 2.2]{Aydin:21} the so-called `fullification' of $\eta$ is defined, and in Definition 4.2 of the same paper, the authors introduce `absolute $\eta$ convergence\footnote{This is done in the context of their slightly different notion of a `convergence'.}.  The following result encompasses and refines \cite[Theorems 1 \& 5]{Aydin:21}.

\begin{prop}\label{Prop:  Locally solid mod properties}
Let $\eta$ be a linear convergence structure on $X$.  \begin{enumerate}[(i)]

    \item Let $(x_\alpha)$ in $X$ be a net in $X$ and $x\in X$. Then $x_\alpha \goesseta x$ if and only if there exists a net $0\leq y_\beta \goeseta 0$ so that for every $\beta_0\in B$ there exists $\alpha_0\in A$ so that for every $\alpha \ge \alpha_0$ there exists $\beta\ge \beta_0$ so that $|x-x_\alpha|\le y_\beta$.

    \item $\mathrm{s}\eta$ is the strongest locally solid convergence structure on $X$ so that $0\le x_\alpha \goeseta 0$ implies $x_\alpha \xrightarrow{} 0$.

    \item Assume that the absolute value operation is $\eta$-to-$\eta$ continuous at $0$.  Then $\mathrm{s}\eta$ is the strongest locally solid convergence structure which is weaker than $\eta$.

    \item Assume that $\eta$ is locally full.  For a net $(x_\alpha)$ in $X$ and $x\in X$, $x_\alpha \goesseta x$ if and only if $|x-x_\alpha|\goeseta 0$.  If $\eta$ is Hausdorff, then so is $\mathrm{s}\eta$.
\end{enumerate}
\end{prop}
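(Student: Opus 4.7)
The plan is to handle the four parts in order, using the net--filter correspondence from Section \ref{sec:prelim} throughout; once (i) is established, the remaining parts reduce mostly to manipulations of a dominating net $(y_\beta)$. For (i), I unpack the definition of $\mathrm{s}\eta$: given $x_\alpha \goesseta x$, there is a witness filter $\mathcal G \goeseta 0$ with $X_+ \in \mathcal G$ and $\Sol(\mathcal G) \subseteq [x_\alpha - x]$. Since $X_+ \in \mathcal G$, the trace $\{G \cap X_+ \mid G \in \mathcal G\}$ is a filter base, so $\mathcal G$ can be represented as the tail filter of a net $(y_\beta)$ with $y_\beta \in X_+$ and $y_\beta \goeseta 0$. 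The inclusion $\Sol(\mathcal G) \subseteq [x_\alpha - x]$ then unwinds precisely into the stated domination condition (for every $\beta_0$ the solid hull of $\{y_\beta\}_{\beta \ge \beta_0}$ contains a tail of $(x_\alpha - x)$), and the reverse direction packages the same data back into the witness $\mathcal G = [y_\beta]$.

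For (ii), I first verify that $\mathrm{s}\eta$ is a locally solid linear convergence structure. Local solidness follows from Theorem \ref{def-ls}\eqref{def-ls-fil-hull}: given $\mathcal F \goesseta 0$ with witness $\mathcal G$, the filter $\Sol(\mathcal G) \subseteq \mathcal F$ has a solid base and is itself $\mathrm{s}\eta$-null via the same $\mathcal G$. Linearity uses the net characterization from (i): additivity follows from the Riesz-decomposition inclusion $\Sol(\mathcal G_1 + \mathcal G_2) \subseteq \Sol(\mathcal G_1) + \Sol(\mathcal G_2)$ together with $X_+ + X_+ = X_+$, while scalar multiplication is handled by dominating nets of the form $|c_\alpha| y_\beta$. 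The property $0 \le x_\alpha \goeseta 0 \Rightarrow x_\alpha \goesseta 0$ is immediate by taking $\mathcal G = [x_\alpha]$. For maximality, if $\lambda$ is locally solid satisfying that property and $\mathcal F \goesseta 0$ with witness $\mathcal G$, then a positive-net representative of $\mathcal G$ is $\eta$-null hence $\lambda$-null; local solidness of $\lambda$ and Theorem \ref{def-ls}\eqref{def-ls-fil-hull} give $\Sol(\mathcal G) \goesl 0$, and $\Sol(\mathcal G) \subseteq \mathcal F$ yields $\mathcal F \goesl 0$.

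For (iii), continuity of $|\cdot|$ at $0$ means $|\mathcal F| \goeseta 0$ whenever $\mathcal F \goeseta 0$. Since every $|F| \subseteq X_+$ we have $X_+ \in |\mathcal F|$, and since $F \subseteq \Sol(|F|)$ for every $F \in \mathcal F$ we have $\Sol(|\mathcal F|) \subseteq \mathcal F$; hence $\mathcal F \goesseta 0$ with witness $|\mathcal F|$, so $\mathrm{s}\eta \le \eta$. Maximality is now an immediate consequence of (ii): any locally solid $\lambda \le \eta$ satisfies $0 \le x_\alpha \goeseta 0 \Rightarrow x_\alpha \goesl 0$ and hence $\lambda \le \mathrm{s}\eta$ by (ii).

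For (iv), under local fullness the characterization from (i) collapses to $|x - x_\alpha| \goeseta 0$: the reverse direction uses $y_\alpha = |x - x_\alpha|$ itself as witness, while the forward direction is Remark \ref{Rem: Locally full} applied to $z_\alpha = |x - x_\alpha|$ with the witness $(y_\beta)$ from (i). For Hausdorffness of $\mathrm{s}\eta$, if $x_\alpha \goesseta x$ and $x_\alpha \goesseta x'$, then $|x - x_\alpha| + |x' - x_\alpha| \goeseta 0$ by linearity of $\eta$, and the constant net $|x - x'|$ is termwise dominated, so local fullness forces the constant net $|x - x'|$ to be $\eta$-null; Hausdorffness of $\eta$ then yields $|x - x'| = 0$. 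The main technical hurdle is the linearity check in (ii)---specifically continuity of scalar multiplication, which requires building a dominating net combining the scalar data $c_\alpha \to 0$ and the witness $(y_\beta)$ from (i); this is the reason for establishing (i) first, after which everything else is a matter of routine filter manipulation.
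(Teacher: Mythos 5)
Your proposal is correct and follows essentially the same route as the paper's proof: the filter/net unwinding of the witness $\mathcal G$ for (i), the Riesz-decomposition identity $\Sol(A+B)=\Sol(A)+\Sol(B)$ on $X_+$ for the linearity of $\mathrm{s}\eta$, the domination argument for maximality in (ii), and Remark \ref{Rem: Locally full} for the first claim in (iv). The only cosmetic deviations are that you obtain $\mathrm{s}\eta\le\eta$ in (iii) by exhibiting the witness $\abs{\mathcal F}$ directly rather than passing through (ii) and local solidity of $\mathrm{s}\eta$, and you prove Hausdorffness in (iv) by checking uniqueness of limits directly instead of showing $\{0\}$ is $\mathrm{s}\eta$-closed and citing \cite[Proposition 3.1.2]{Beattie:02}; both variants are equivalent in substance.
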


\begin{proof}
(i):~ Replacing $(x_\alpha)$ with $(x_\alpha-x)$ if necessary, we may suppose that $x=0$.  Assume that $x_\alpha \goesseta 0$.  There exists a filter base $\mathcal{B}$ consisting of subsets of $X_+$ so that $[\mathcal{B}]\goeseta 0$ and $[\Sol(\mathcal{B})]\subseteq [x_\alpha]$.  Using the correspondence between filters and nets we find a net $(y_\beta)$ in $X_+$ so that $[y_\beta]=[\mathcal{B}]\goeseta 0$, hence $y_\beta\goeseta 0$.  For every $\beta_0\in B$ there exists $\alpha_0\in A$ so that $\{x_\alpha\}_{\alpha\ge \alpha_0}\subseteq \Sol(\{y_\beta\}_{\beta\ge\beta_0})$.  Let $\alpha \ge \alpha_0$.  There exists $\beta\ge \beta_0$ so that $|x_\alpha|\le |y_\beta|=y_\beta$.

Conversely, suppose that there exists a net $0\leq y_\beta \goeseta 0$ so that for every $\beta_0\in B$ there exists $\alpha_0\in A$ so that for every $\alpha \ge \alpha_0$ there exists $\beta\ge \beta_0$ so that $|x_\alpha|\le y_\beta$.  Then $[y_\beta]\goeseta 0$ and contains $X_+$.  By assumption, for every $\beta_0\in B$ there exists $\alpha_0 \in A$ so that $\{x_\alpha\}_{\alpha\ge \alpha_0}\subseteq \Sol(\{y_\beta\}_{\beta \ge \beta_0})$.  Therefore $\Sol([y_\beta])\subseteq  [x_\alpha]$.\medskip

(ii):~ Using \cite[Proposition 3.2.3]{Beattie:02} and Theorem \ref{def-ls} (\ref{def-ls-fil}), it is easy to see that $\mathrm{s}\eta$ defines a locally solid convergence structure on $X$.  We remark that condition (iii) in \cite[Proposition 3.2.3]{Beattie:02} follows from the fact that $\Sol(A+B)=\Sol(A)+\Sol(B)$ for all $A,B\subseteq X_+$.

It follows immediately from (i) that $0\le x_\alpha \goeseta 0$ implies $0\le x_\alpha \goesseta 0$. Assume that $\to$ is a locally solid convergence with this property and that $x_\alpha \goesseta 0$. Let $(y_{\beta})$ be the net from (i). Since $0\le y_\beta \goeseta 0$, it follows that $y_\beta\to0$. Due to local solidity of $\to$ and Theorem \ref{def-ls} we conclude that $x_\alpha \to 0$. Thus, $\mathrm{s}\eta$ is stronger than $\to$.\medskip

(iii):  In the light of (ii) it is enough to show that $\mathrm{s}\eta\le\eta$. Assume that $x_\alpha \goeseta 0$, and so by our assumption $\left|x_\alpha\right| \goeseta 0$. It follows from (ii) that $\left|x_\alpha\right| \goesseta 0$, and since $\mathrm{s}\eta$ is locally solid, we conclude that $x_\alpha \goesseta 0$.\medskip

(iv):  Necessity in the first statement follows immediate from (i) and Remark \ref{Rem: Locally full}, and sufficiency follows from (ii).  For the second statement, let $(x_\alpha)$ be a net such that $x_\alpha =0$ for every $\alpha$.  Suppose that $x_\alpha \goesseta x$.  Then $|x-x_\alpha|\goeseta 0$.  Since $\eta$ is Hausdorff and $|x-x_\alpha|=|x|$ for every $\alpha$, it follows that $0=|x|=x$.  Hence $\{0\}$ is $\mathrm{s}\eta$-closed so that $\mathrm{s}\eta$ is Hausdorff by \cite[Proposition 3.1.2]{Beattie:02}.
\end{proof}

\begin{prop}\label{Prop:  Locally solid mod countability}
Let $\eta$ be a linear convergence structure on $X$.  If $\eta$ is (weakly or strongly) first countable, then so is $\mathrm{s}\eta$.
\end{prop}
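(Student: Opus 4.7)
The plan is to handle each of the three countability properties separately, in each case unpacking the definition of $\mathrm{s}\eta$-convergence and threading the corresponding countability hypothesis on $\eta$ through it.

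For first countability, suppose $\mathcal{F} \goesseta 0$, and fix a witness $\mathcal{G}\goeseta 0$ with $X_+\in \mathcal{G}$ and $\Sol(\mathcal{G})\subseteq \mathcal{F}$. First countability of $\eta$ furnishes a subfilter $\mathcal{G}' \subseteq \mathcal{G}$ with countable base $\{G_n\}_{n\in \N}$ that still $\eta$-converges to $0$. I replace $\mathcal{G}'$ by the filter $\mathcal{G}''$ generated by $\{G_n \cap X_+\}_{n\in \N}$; then $\mathcal{G}'\subseteq \mathcal{G}''\subseteq \mathcal{G}$, so $\mathcal{G}''\goeseta 0$ by axiom (ii), and $X_+\in \mathcal{G}''$. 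The filter $[\Sol(\mathcal{G}'')]$ has countable base $\{\Sol(G_n\cap X_+)\}_{n\in \N}$, is contained in $\Sol(\mathcal{G})\subseteq \mathcal{F}$, and $\mathrm{s}\eta$-converges to $0$ by definition, so $\mathrm{s}\eta$ is first countable.

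For strong first countability, take a countable fixed intersection-closed local basis $\mathcal{H}_0$ for $\eta$ at $0$ supplied by Proposition \ref{prop:  Existence of convergent directed fixed local basis}, and propose $\mathcal{K} \defeq \{\Sol(H\cap X_+) \mid H\in \mathcal{H}_0\}$ as a countable local basis for $\mathrm{s}\eta$. Given $\mathcal{F}\goesseta 0$, unpack via $\mathcal{G}$ as above, and apply strong first countability of $\eta$ to $\mathcal{G}$ to obtain a filter base $\mathcal{B}\subseteq \mathcal{G}\cap \mathcal{H}_0$ with $[\mathcal{B}]\goeseta 0$. I would then verify that $\{\Sol(B\cap X_+) \mid B\in \mathcal{B}\}$ is a filter base (using the elementary inclusion $\Sol((B_1\cap B_2)\cap X_+) \subseteq \Sol(B_1\cap X_+) \cap \Sol(B_2\cap X_+)$ together with the filter-base property of $\mathcal{B}$), sits inside $\mathcal{F}\cap \mathcal{K}$, and generates a filter that $\mathrm{s}\eta$-converges to $0$ (apply the definition with witness the filter generated by $\{B\cap X_+\}_{B\in \mathcal{B}}$, which converges to $0$ under $\eta$ by axiom (ii) and contains $X_+$).

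The weak first countable case is the least mechanical. Starting from $x_\alpha \goesseta 0$, Proposition \ref{Prop:  Locally solid mod properties}(i) produces a dominating net $0\le y_\beta\goeseta 0$. I apply weak first countability of $\eta$ to $(y_\beta)$ to obtain an increasing sequence $(\beta_n')$ in $B$; for each $n$, the domination condition at $\beta_n'$ yields some $\alpha_n''\in A$ such that every $\alpha\ge \alpha_n''$ is dominated by some $y_\beta$ with $\beta\ge\beta_n'$, and I choose an increasing sequence $(\alpha_n')$ in $A$ with $\alpha_n'\ge \alpha_n''$. Given $\alpha_n\ge \alpha_n'$, I select $\beta_n\ge \beta_n'$ with $\abs{x_{\alpha_n}}\le y_{\beta_n}$; weak first countability of $\eta$ then gives $y_{\beta_n}\goeseta 0$, and the sequence $(y_{\beta_n})$ itself serves as a dominating sequence for $(x_{\alpha_n})$ in a second application of Proposition \ref{Prop:  Locally solid mod properties}(i), yielding $x_{\alpha_n}\goesseta 0$. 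The main subtlety I anticipate is that the induced sequence $(\beta_n)$ need not be monotone; this is harmless because weak first countability demands only termwise domination $\beta_n \ge \beta_n'$, and Proposition \ref{Prop:  Locally solid mod properties}(i) admits arbitrary nets (in particular $\N$-indexed ones) as the dominating object. The other two cases reduce essentially to bookkeeping that inserts $X_+$ into the countable base and then takes solid hulls.
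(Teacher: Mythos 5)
Your proposal is correct and takes essentially the same route as the paper's proof: the candidate countable basis $\{\Sol(H\cap X_+)\mid H\in\mathcal{H}\}$ in the strong case, the intersect-with-$X_+$ refinement of a countable base in the first countable case, and the dominating-net argument via Proposition \ref{Prop:  Locally solid mod properties}(i) in the weak case all match the paper. The only cosmetic difference is that you finish the weak case by reapplying Proposition \ref{Prop:  Locally solid mod properties}(i) with the countably indexed dominating net, whereas the paper invokes part (ii) of that proposition together with local solidity of $\mathrm{s}\eta$; both conclusions are equally valid.
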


\begin{proof}
Assume that $\eta$ is weakly first countable, let $x_\alpha \goesseta 0$, and let $(y_{\beta})$ be given by part (i) of Proposition \ref{Prop:  Locally solid mod properties}. Since $\eta$ is weakly first countable, there is $\left(\beta'_{n}\right)$ such that $y_{\beta_{n}}\goeseta 0$ whenever $\beta_{n}\ge\beta'_{n}$, for every $n\in\N$. Let $(\alpha'_{n})$ be an increasing sequence in $A$ such that for every $\alpha \ge \alpha'_n$ there exists $\beta\ge \beta'_n$ so that $|x_\alpha|\le y_\beta$. Assume that $(\alpha_{n})$ is such that $\alpha_{n}\ge\alpha'_{n}$ for every $n\in\N$. For each $n\in \N$, let $\beta_{n}\ge\beta'_{n}$ be such that $|x_{\alpha_{n}}|\le y_{\beta_{n}}$. By assumption we have $0\le y_{\beta_{n}}\goeseta 0$, hence, by part (ii) of Proposition \ref{Prop:  Locally solid mod properties} we have $y_{\beta_{n}}\goesseta 0$, and since $\mathrm{s}\eta$ is locally solid, we conclude that $x_{\alpha_{n}}\goeseta 0$.

Suppose that $\eta$ is first countable, and let $\mathcal{F}\goesseta 0$.  By definition, there exists $X_+\in\mathcal{G}\goeseta 0$ so that $\Sol(\mathcal{G})\subseteq \mathcal{F}$.  By the first countability of $\eta$, there exists a countable filter base $\mathcal{B}=\{B_{n}\}_{n\in\N}\subseteq \mathcal{G}$ so that $[\mathcal{B}]\goeseta 0$. Replacing each $B_n$ with $B_n\cap X_+$ if necessary, we may suppose that $X_+\in [\mathcal{B}]$.  Then $\Sol([\mathcal{B}])\goesseta 0$ and so, since $\Sol([\mathcal{B}]) \subseteq \Sol(\mathcal{G})\subseteq \mathcal{F}$, it follows that $\mathrm{s}\eta$ is first countable.

Assume that $\eta$ is strongly first countable, and let $\mathcal{H}$ be a countable fixed intersection-closed local basis for $\eta$ at $0$.  Let $\mathcal{H}'\defeq \{\Sol(H\cap X_+) \mid H\in\mathcal{H}\}$.  We show that $\mathcal{H}'$ is a local basis for $\mathrm{s}\eta$ at $0$.  Let $\mathcal{F}\goesseta 0$. There exists a filter $X_+\in\mathcal{G}\goeseta 0$ so that $\Sol(\mathcal{G})\subseteq \mathcal{F}$.  It follows from $\mathcal{G}\goeseta 0$ that $[\mathcal{H}\cap \mathcal{G}]\goeseta 0$.  Since $X_+\in \mathcal{G}$, it follows that $\mathcal{D}\defeq \{H\cap X_+ \mid H\in\mathcal{G}\cap \mathcal{H}\}$ is a filter base. Note that if $H\in\mathcal{G}$, then $H\cap X_+\in \mathcal{G}$, hence $\Sol \left(H\cap X_+\right)\in \mathcal{F}$. Thus, $\Sol(\mathcal{D})\subseteq \mathcal{F}$. At the same time, $\Sol(\mathcal{D})\subseteq \mathcal{H}'$, hence $\Sol\left([\mathcal{D}]\right)\subseteq\left[\Sol(\mathcal{D})\right]\subseteq[\mathcal{H}'\cap \mathcal{F}]$. Finally, $[\mathcal{H}\cap\mathcal{G}]\subseteq [\mathcal{D}]$ implies $X_+\in [\mathcal{D}]\goeseta 0$. We conclude that $[\mathcal{H}'\cap \mathcal{F}]\goesseta 0$.
\end{proof}

\section{First countability of order and related convergences}\label{Sec: order}

Throughout this section we denote by $X$ an Archimedean vector lattice.  One of the most important examples of a locally solid convergence is \term{order convergence}: a net $(x_\alpha)$ in $X$ \term{converges in order} to $x$ (written $x_\alpha\goeso x$) if there exists a set $D$ such that $\bigvee D= 0$ and
for every $d\in D$ there exists $\alpha_d\in A$ such that $\abs{x_\alpha-x}\le d$ for all $\alpha\ge\alpha_d$.  By replacing $D$ with $D^\wedge$ if necessary, one may suppose that $D\downarrow 0$.  In the language of filters, $\mathcal F\goeso x$ if $\mathcal F$ contains a collection of order intervals whose intersection is~$\{x\}$.\medskip

Closely related to order convergence are \term{$\sigma$-order convergence} and \term{relatively uniform convergence}.  A net $(x_\alpha)$ in $X$ \term{$\sigma$-order converges} to $x$, written $x_\alpha\goesso x$, if there exists a sequence $(u_n)$ in $X_+$ such that $\bigwedge\limits_{n\in\N} u_n= 0$ and for every $n\in N$ there exists $\alpha_n$ such that $\abs{x_\alpha-x}\le u_n$ whenever $\alpha\ge\alpha_n$.  We will call $(u_n)$ a \term{dominating sequence} for the net $(x_\alpha)$.  Replacing $(u_n)$ with $(u_{1}\wedge ...\wedge u_{n})$ we may assume that $(u_n)$ is decreasing.  In filter language, $\mathcal F\goesso x$ if $\mathcal F$ contains a sequence of order intervals whose intersection is $\{x\}$.  It is easy to see that $\sigma{\rm o}$ is a first countable Hausdorff and locally order bounded locally solid convergence structure and it is stronger than order convergence. The convergence structure $\sigma\mathrm{o}$ was investigated in \cite{Anguelov:05}.  A net $(x_\alpha)$ in $x$ converges \term{relatively uniformly} to $x$, written $x_\alpha \goesu x$, if there exists $u\in X_+$ so that for every $\varepsilon>0$ there exists $\alpha_\varepsilon\in A$ so that $|x-x_\alpha|\le \varepsilon u$ for all $\alpha \ge \alpha_\varepsilon$.  Relative uniform convergence is a locally solid convergence structure, see \cite[Example 2.3 \& Proposition 2.4]{VanderWalt:13} and \cite[Section 5]{OBrien:23}, which is stronger than $\sigma$-order convergence.  All three introduced convergences are locally order bounded.

This section contains the main results of the paper, namely, characterisations of $X$ for which ${\rm o}$, $\sigma{\rm o}$, ${\rm ru}$ and their unbounded modifications are (strongly) first countable.  We first consider ${\rm ru}$ and ${\rm uru}$, which are the easiest cases to deal with.  The following result follows immediate from Corollary \ref{first-c-lemma} and Proposition \ref{fsc-unbdd}, see also \cite{VanderWalt:16}.

\begin{prop}\label{ru-fc}
Relative uniform convergence is first countable. Furthermore, the following statements are equivalent. \begin{enumerate}[(i)]

    \item There exists a countable set $C\subseteq X_+$ so that $X=I_C$;

    \item ${\rm ru}$ is strongly first countable.

\end{enumerate}

If either (i) or (ii), and hence both, is true, then ${\rm uru}$ is strongly first countable.

\end{prop}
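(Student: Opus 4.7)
The plan is to break the statement into three pieces: unconditional first countability of ${\rm ru}$, the equivalence $(i)\Leftrightarrow(ii)$ using Corollary~\ref{first-c-lemma}, and a final application of Proposition~\ref{fsc-unbdd} for ${\rm uru}$.

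First I would show that ${\rm ru}$ is first countable: given $\mathcal F\goesu 0$ with dominator $u\in X_+$, the countable filter base $\{\tfrac{1}{n}[-u,u]\}_{n\in\N}$ is contained in $\mathcal F$ and generates an ${\rm ru}$-null filter with the same dominator. Along the way I would record that ${\rm ru}$ is locally order bounded (taking $\varepsilon=1$ in the definition yields an order-bounded tail), so that ${\rm ru}$ is a locally solid, locally order bounded convergence and Corollary~\ref{first-c-lemma} applies to it.

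The direction $(ii)\Rightarrow(i)$ then falls straight out of Corollary~\ref{first-c-lemma}, which supplies an increasing sequence $(v_n)\subseteq X_+$ with $X=I_{\{v_n\}}$; setting $C\defeq\{v_n\mid n\in\N\}$ gives (i). For $(i)\Rightarrow(ii)$, I would enumerate $C=\{c_n\mid n\in\N\}$ and set $v_n\defeq c_1+\cdots+c_n$, so that $(v_n)$ is increasing in $X_+$ and $X=I_C=I_{\{v_n\}}$. By Corollary~\ref{first-c-lemma} it then suffices to prove that the restriction of ${\rm ru}$ to each $I_{v_n}$ is strongly first countable. My candidate countable local basis at~$0$ on $I_{v_n}$ is
\begin{equation*}
\mathcal H_n\defeq\bigl\{\tfrac{1}{m}[-v_j,v_j]\cap I_{v_n}\mid m,j\in\N\bigr\}.
\end{equation*}
Verifying this goes as follows: if $\mathcal F$ is a filter on $I_{v_n}$ with $\mathcal F\goesu 0$ in the restricted sense, pick a dominator $u\in X_+$; since $X=I_{\{v_n\}}$, there exist $j,M\in\N$ with $u\le Mv_j$, whence the inclusion $\tfrac{1}{mM}[-u,u]\subseteq\tfrac{1}{m}[-v_j,v_j]$ forces $\tfrac{1}{m}[-v_j,v_j]\cap I_{v_n}\in\mathcal F$ for every~$m$. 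The resulting decreasing filter base lies in $\mathcal F\cap\mathcal H_n$ and is itself ${\rm ru}$-null with dominator~$v_j$.

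The main obstacle is exactly this restriction step: the dominator $u$ of a convergent filter in $I_{v_n}$ need not belong to $I_{v_n}$, so a naive countable basis drawn only from $I_{v_n}$ (say, using only $v_n$ itself as dominator) will not work. The hypothesis $X=I_{\{v_n\}}$ rescues the argument by letting us absorb any $u\in X_+$ into a multiple of some $v_j$, so the countable pool $\{v_j\}_{j\in\N}$ of dominators suffices. Finally, for the concluding claim, Proposition~\ref{fsc-unbdd} applied to the strongly first countable locally solid convergence ${\rm ru}$ with the ideal $I=X$ and the sequence $(v_n)$ satisfying $X=I_{\{v_n\}}$ yields that ${\rm uru}={\rm u}_X{\rm ru}$ is strongly first countable.
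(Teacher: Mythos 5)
Your proposal is correct and follows essentially the same route as the paper, which deduces the proposition directly from Corollary~\ref{first-c-lemma} (for the equivalence of (i) and (ii)) and Proposition~\ref{fsc-unbdd} (for ${\rm uru}$); you simply make explicit the details the paper leaves implicit, namely the countable base $\{\tfrac1n[-u,u]\}$ for first countability (the same argument as in Proposition~\ref{muB-fc}) and the verification that the restriction of ${\rm ru}$ to each $I_{v_n}$ is strongly first countable via the countable pool of dominators $\{v_j\}$. The attention you pay to the fact that a dominator for a filter in $I_{v_n}$ need not lie in $I_{v_n}$ is exactly the right point to address, and your fix using $X=I_{\{v_n\}}$ is sound.
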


Note that the converse to the last claim is false. Indeed, on $X=c_0$ ${\rm uru}$ coincides with the pointwise topology which is metrizable, but $X$ does not satisfy the condition (i) of the Proposition, since it does not have a strong unit (see Remark \ref{rem-strong}).\medskip

We now turn to order- and $\sigma$-order convergence.  Our first result characterises when order convergence on $X$ is first countable.  Recall that $\sigma$-order convergence is always first countable.  We say that $X$ has the \term{countable supremum property (CSP)} if for every set $A\subseteq X$ such that $\bigwedge A=0$, there is a countable set $B\subseteq A$ such that $\bigwedge B=0$.  This property is equivalent to the fact that every order bounded, disjoint subset of $X$ is countable, see \cite[Theorem 29.3]{Luxemburg:71} or \cite[Theorem 2.23]{Deng:25}. In the literature, this property is sometimes referred to as `order separability', see for instance \cite[Definition 23.2 (iii)]{Luxemburg:71}.  However, in the context of topological spaces and, more generally, convergence spaces, a separable space is one that contains a countable dense set; hence `order separable' may be confused with `separable with respect to the order convergence structure' but, in general, these notions are distinct.  Indeed, if $T$ is an uncountable set then $c_{00}(T)$ has the CSP, but if $A\subseteq c_{00}(T)$ is countable, there exists $t\in T$ so that $u(t)=0$ for every $u\in A$; then $\left\{v\in c_{00}(T),~ v(t)=0\right\}$ is a pointwise closed, hence order closed proper subset of $c_{00}(T)$, which contains $A$.  See Remarks \ref{sep1}, \ref{sep2} and \ref{sep3} for more details on the connection between CSP and various separability conditions.


\begin{prop}\label{csp}
The following are equivalent:
\item[(i)] $X$ has CSP;
\item[(ii)] $\mathrm{o}=\sigma \mathrm{o}$ on $X$;
\item[(iii)] Order convergence is first countable on $X$;
\item[(iv)] Order convergence is weakly first countable on $X$;
\item[(v)] Order convergence is Fr\'{e}chet-Urysohn on $X$;
\item[(vi)] Order convergence is sequential on $X$.
\end{prop}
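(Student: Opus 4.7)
The plan is to establish the cycle $(i)\Rightarrow(ii)\Rightarrow(iii)\Rightarrow(iv)\Rightarrow(v)\Rightarrow(vi)\Rightarrow(i)$. Several of the links are essentially formal. For $(i)\Rightarrow(ii)$, given $x_\alpha\goeso x$ and a witnessing family $D\downarrow 0$, the CSP produces a countable $\{d_k\}\subseteq D$ with $\bigwedge_k d_k=0$; then $u_k\defeq d_1\wedge\cdots\wedge d_k\downarrow 0$ is a dominating sequence witnessing $x_\alpha\goesso x$, while the converse $\sigma\mathrm{o}\ge\mathrm{o}$ is automatic. The step $(ii)\Rightarrow(iii)$ is immediate since $\sigma\mathrm{o}$ is first countable, and $(iii)\Rightarrow(iv)\Rightarrow(v)\Rightarrow(vi)$ follows from Proposition~\ref{Prop:  First countable char} together with the chain ``weakly first countable $\Rightarrow$ monotone Fr\'echet--Urysohn $\Rightarrow$ Fr\'echet--Urysohn $\Rightarrow$ sequential'' recorded in the text.

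The closure $(vi)\Rightarrow(i)$ is the substantive step, which I argue contrapositively. Suppose CSP fails; after replacing with $D^\wedge$ if necessary, there is a downward directed $D\subseteq X_+$ with $\bigwedge D=0$ such that no countable subset of $D$ has infimum zero. Directing $D$ by the reverse order, the net $d\mapsto d$ $\mathrm{o}$-converges to $0$, so $0\in\overline{D}$. Since $\mathrm{o}$ is sequential, the argument of Proposition~\ref{conv} applied to iterated sequential order-adherences $\overline{D}^{\mathrm{s},m}$ shows that the $\omega_1$-iterate $\overline{D}^{\mathrm{s},\omega_1}$ is sequentially closed, hence by sequentiality closed, and therefore coincides with $\overline{D}$; thus $0\in\overline{D}^{\mathrm{s},m}$ for some countable ordinal $m$. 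I then prove by transfinite induction on $m$ the claim: every $x\in\overline{D}^{\mathrm{s},m}$ admits a countable $E_x\subseteq D$ such that every lower bound of $E_x$ in $X$ is $\le x$. The base $m=0$ is trivial. For a successor, write $x$ as an order limit of a sequence $x_n\in\overline{D}^{\mathrm{s},m-1}$; the induction hypothesis yields countable $E_n\subseteq D$, and for $E\defeq\bigcup_n E_n$ any lower bound $\ell$ of $E$ satisfies $\ell\le x_n$ for every $n$. Choosing a witnessing $D'\downarrow 0$ with $x_n\le x+d$ eventually for each $d\in D'$ yields $\ell\le x+d$ for every such $d$, whence $\ell-x$ is a lower bound for $D'$ and so $\ell\le x$. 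Limit ordinals reduce to a smaller level. Specialising to $x=0$ gives a countable $E\subseteq D$ all of whose lower bounds are $\le 0$; since $D\subseteq X_+$ the element $0$ is itself a lower bound, so $\bigwedge E=0$, contradicting the choice of $D$.

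The main obstacle is running this transfinite induction. It is essential to use iterated sequential adherence rather than net-adherence, because the inductive step assembles $E=\bigcup_n E_{x_n}$, which is countable only when the $x_n$ are indexed by $\N$; this is precisely where the identification $\overline{D}=\overline{D}^{\mathrm{s},\omega_1}$, valid in a sequential space, is used. The Archimedean structure of $X$ enters in the successor step, allowing one to pass a lower bound $\ell$ through the approximation $x_n\le x+d$ (eventually) and deduce $\ell\le x$ from $\bigwedge D'=0$.
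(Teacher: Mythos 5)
Your proposal is correct, and the interesting difference lies in the implication (vi)$\Rightarrow$(i): the easy links (i)$\Rightarrow$(ii)$\Rightarrow$(iii)$\Rightarrow$(iv)$\Rightarrow$(v)$\Rightarrow$(vi) are handled exactly as in the paper (CSP extracts a countable dominating family, $\sigma\mathrm{o}$ is always first countable, and the standard hierarchy of countability properties), but where the paper simply cites \cite[Theorem 3.8 and Remark 3.9]{Deng:25} for (vi)$\Rightarrow$(i) --- a proof which, per Remark \ref{bcsp}, runs through $\sigma$-ideals and only needs that $[0,x]\cap Y$ is sequentially order closed for every $\sigma$-ideal $Y$ --- you give a self-contained argument inside the paper's own machinery: from a witness $D$ to the failure of CSP you place $0$ in the order closure of $D$, use sequentiality to reach $0$ at a countable stage of the iterated \emph{sequential} order-adherence (the adaptation of Proposition \ref{conv} you invoke goes through verbatim), and then run a transfinite induction producing, for each point of stage $m$, a countable subset of $D$ whose lower bounds all lie below that point; specialising to $0$ gives a countable subset of $D$ with infimum $0$, a contradiction. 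I checked the induction: the successor step ($\ell\le x_n$ eventually combined with $x_n\le x+d$ eventually, so $\ell-x$ is a lower bound of $D'$ and $\bigwedge D'=0$ forces $\ell\le x$) and the reduction of $A$ to the downward directed $A^\wedge$ both work, though you should note explicitly that passing to $D^\wedge$ preserves the absence of countable subsets with infimum zero (routine), and that the successor step uses only $\bigwedge D'=0$, not the Archimedean property as you claim; likewise the specific bound $\omega_1$ is a convenience, the essential point being that sequentiality makes the transfinite sequential adherence exhaust the closure. What the two routes buy: the paper's is shorter and isolates the minimal hypothesis (sequential order closedness of intervals of $\sigma$-ideals, which powers Remark \ref{bcsp}), while yours keeps the paper self-contained and avoids reliance on a reference still in preparation.
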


\begin{proof}
(i)$\Rightarrow$(ii) is straightforward, and (ii)$\Rightarrow$(iii) follows from the fact that $\sigma \mathrm{o}$ is always first countable.  Since every first countable space is weakly first countable, every weakly first countable space is is Fr\'{e}chet-Urysohn and every Fr\'{e}chet-Urysohn space is sequential, (iii)$\Rightarrow$(iv)$\Rightarrow$(v)$\Rightarrow$(vi) also hold. For (vi)$\Rightarrow$(i) see \cite[Theorem 3.8 and Remark 3.9]{Deng:25}.
\end{proof}

\begin{rem}\label{bcsp}
In the conditions of Proposition \ref{csp} the clause ``(on) $X$'' can be replaced with ``(on) every order interval of $X$''.  This apparently weakens each of the conditions, but it is easy to see that $X$ has the CSP if and only if each of its order intervals does.  Furthermore, the proof of (vi)$\Rightarrow$(i) only utilizes the fact that if $Y$ is a $\sigma$-ideal, then $\left[0,x\right]\cap Y$ is sequentially order closed for every $x\in X$.
\qed\end{rem}

In the remainder of this section we investigate strong first countability of order- and $\sigma$-order convergence on vector lattices.  We call $X$ \term{strongly first countable} if order convergence is strongly first countable on $X$, and \term{$\sigma$-strongly first countable} if $\sigma$-order convergence is strongly first countable.  The interrelationship between these concepts is described as follows.

\begin{cor}\label{main0}
$X$ is strongly first countable if and only if it is $\sigma$-strongly first countable and has the CSP.
\end{cor}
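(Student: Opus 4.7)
The plan is to observe that this corollary is essentially an immediate consequence of Proposition \ref{csp}, together with the definitions of (strong) first countability for order and $\sigma$-order convergence. The key fact driving the proof in both directions is the equivalence CSP $\Leftrightarrow$ $\mathrm{o}=\sigma\mathrm{o}$ established in Proposition \ref{csp}.

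For the forward direction, I would assume $X$ is strongly first countable, meaning order convergence $\mathrm{o}$ on $X$ is strongly first countable. Since strong first countability implies first countability, Proposition \ref{csp} (condition (iii) implies (i) and (ii)) gives that $X$ has the CSP and $\mathrm{o}=\sigma\mathrm{o}$. The latter equality means the strongly first countable structure $\mathrm{o}$ is literally the same as $\sigma\mathrm{o}$, so $X$ is $\sigma$-strongly first countable.

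For the reverse direction, I would assume $X$ has the CSP and is $\sigma$-strongly first countable. Again by Proposition \ref{csp}, CSP gives $\mathrm{o}=\sigma\mathrm{o}$, so $\mathrm{o}$ inherits strong first countability from $\sigma\mathrm{o}$, showing that $X$ is strongly first countable.

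No real obstacle is anticipated: the argument collapses to citing Proposition \ref{csp} twice, since strong first countability is preserved trivially under equality of convergence structures, and the weaker implication (strong first countability $\Rightarrow$ first countability) is what allows Proposition \ref{csp} to be applied in the forward direction.
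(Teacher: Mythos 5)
Your proposal is correct and follows essentially the same route as the paper's own proof: both directions reduce to Proposition \ref{csp}, using the implication strong first countability $\Rightarrow$ first countability to obtain the CSP, and the equivalence CSP $\Leftrightarrow$ $\mathrm{o}=\sigma\mathrm{o}$ to transfer strong first countability between the two convergence structures.
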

\begin{proof}
It follows immediately from Proposition \ref{csp} that if $X$ has the CSP, then strong first countability and $\sigma$-strong first countability are equivalent for $X$.  Since strong first countability  implies first countability, it follows from Proposition \ref{csp} that if $X$ is strongly first countable, then it has the CSP.
\end{proof}

\begin{rem}\label{Rem:  Strong order conv}
The following alternative definition of order convergence used to be dominant in the literature (see e.g. \cite{Aliprantis:03} and \cite{Aliprantis:06}). We say that a net $(x_{\alpha})\subseteq X$ \term{strongly order converges} to $x\in X$ if there are $\alpha_{0}$ and a net $(y_\alpha)_{\alpha\ge\alpha_{0}}$ such that $y_\alpha \downarrow 0$ and $|x_\alpha-x|\le y_\alpha$, for every $\alpha\ge\alpha_{0}$. It is easy to see that this implies $x_\alpha\goeso x$. Moreover, for sequences strong order convergence agrees with $\sigma$-order convergence.

One can prove that strong order convergence satisfies the requirements for a convergence considered in \cite{Aydin:21}. Moreover, it satisfies requirements (i) and (iii) of a convergence structure. Let us show that it does not satisfy condition (ii), by providing an example of a net which strongly order converges to $0$, but has a quasi-subnet that is a sequence which does not $\sigma$-order converge to $0$.

Let $X:=c\left(\R\right)$ and let $A$ be the collection of finite subsets of $\R$ which contain $1$. For $\alpha\in A$ define $y_{\alpha}:=\1_{\R\backslash\alpha}$ and $x_{\alpha}:=\delta_{\bigvee\left(\alpha\cap\N\right)+1}$. It is easy to see that $x_{\alpha}\le y_{\alpha}\downarrow \0$. It is well-known that $(\delta_{n})$ is not $\sigma$-order null. However, this sequence is a quasi-subnet of $(x_\alpha)$. Indeed, for any $\alpha_0$ let $n_0:=\bigvee\left(\alpha\cap\N\right)+1$; if $n\ge n_0$, then for $\alpha:=\alpha_0\cup\left\{n-1\right\}$ we have $x_\alpha=\delta_n$, and so $\left\{\delta_{n}\right\}_{n\ge n_{0}}\subseteq\left\{x_{\alpha}\right\}_{\alpha\ge \alpha_{0}}$.
\qed\end{rem}

\subsection{Passing to and from sublattices}

Many locally solid convergence structures $\eta$, such as order convergence, can be defined on an arbitrary Archimedean vector lattice $X$. If we wish to emphasise the underlying space, we write $\eta_{X}$. If $Y$ is a sublattice of $X$, we write $\eta_{X}|Y$ to denote the convergence structure $\eta$ on $X$ restricted to $Y$.  In general, $\eta_{X}|Y$ is distinct from $\eta_{Y}$.\medskip

We show next that ($\sigma$-)strong first countability is a ``local'' property:  it passes down to principal ideals, and can be lifted to the whole space from a suitable sequence of principal ideals.  The proofs will be given for the ``$\sigma$'' case, as the other case is proven similarly.

\begin{lem}\label{so-Iu-int}
Let $u\in X_+$.  Then $\sigma \mathrm{o}_X|_{[-u,u]}=\sigma \mathrm{o}_{I_u}|_{[-u,u]}$ and $\mathrm{o}_X|_{[-u,u]} = \mathrm{o}_{I_u}|_{[-u,u]}$.
\end{lem}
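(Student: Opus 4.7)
The plan is to prove the two equalities simultaneously, since the arguments for $\mathrm{o}$ and $\sigma\mathrm{o}$ run in parallel; the sequence of positive elements used in the definition of $\sigma\mathrm{o}$ is simply replaced by a downward directed set in the case of $\mathrm{o}$. Fix $u \in X_+$, let $(x_\alpha)$ be a net in $[-u,u]$ and let $x \in [-u,u]$. The central observation is that $|x_\alpha - x| \le 2u$ for every $\alpha$, so all dominating data may, without loss of generality, be taken within $[0,2u] \subseteq I_u$. The other key point is a ``regularity'' property of the ideal $I_u$: if $S \subseteq (I_u)_+$, then $\bigwedge_X S = 0$ if and only if $\bigwedge_{I_u} S = 0$. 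Indeed, any positive lower bound $v$ of $S$ in $X$ satisfies $v \le s$ for some $s \in S \subseteq I_u$, so by the ideal property $v \in I_u$; hence a lower bound in $X$ is a lower bound in $I_u$, and the converse is trivial.

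For the inclusion $\sigma\mathrm{o}_{I_u}|_{[-u,u]} \subseteq \sigma\mathrm{o}_X|_{[-u,u]}$ (and similarly for $\mathrm{o}$), suppose $x_\alpha \goesso x$ in $I_u$, witnessed by a decreasing sequence $(u_n) \subseteq (I_u)_+$ with $\bigwedge_{I_u} u_n = 0$. By the regularity observation, $\bigwedge_X u_n = 0$ as well, so the same sequence witnesses $\sigma\mathrm{o}_X$-convergence. For the reverse inclusion, suppose $x_\alpha \goesso x$ in $X$ with dominating sequence $(u_n) \subseteq X_+$ satisfying $\bigwedge_X u_n = 0$. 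Define $u_n' \defeq u_n \wedge 2u$, so that $u_n' \in I_u$ and $0 \le u_n' \le u_n$. Whenever $|x_\alpha - x| \le u_n$, we have $|x_\alpha - x| \le u_n \wedge 2u = u_n'$ (using $|x_\alpha - x| \le 2u$), so $(u_n')$ still dominates $(x_\alpha - x)$ eventually. Moreover $0 \le \bigwedge u_n' \le \bigwedge u_n = 0$ in $X$, so $\bigwedge_X u_n' = 0$, and again by regularity $\bigwedge_{I_u} u_n' = 0$. Thus $x_\alpha \goesso x$ in $I_u$.

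The argument for the order convergence equality is identical in form, simply replacing the decreasing sequence $(u_n)$ by a downward directed set $D \subseteq X_+$ with $\bigwedge D = 0$; truncating to $D' \defeq \{d \wedge 2u \mid d \in D\} \subseteq [0,2u] \subseteq I_u$ preserves both the domination property of the net and the infimum being zero. There is no real obstacle here: the only subtle point is the regularity of $I_u$ as a sublattice of $X$, and that follows immediately from the ideal property. The two equalities then fall out by combining the truncation trick with this regularity observation.
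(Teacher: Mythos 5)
Your proof is correct and follows essentially the same route as the paper: truncate the dominating data at $2u$ using $|x_\alpha-x|\le 2u$, and use the fact that for subsets of $(I_u)_+$ the infimum being zero is the same computed in $X$ or in $I_u$ (which the paper treats as clear, and you spell out via the ideal property, modulo the routine reduction to positive lower bounds by passing to positive parts).
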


\begin{proof}
  It is clear that $\sigma \mathrm{o}_X\le\sigma \mathrm{o}_{I_u}$ on $I_u$ and therefore on $A$.  To prove the converse, suppose that $x_\alpha\xrightarrow{\sigma \mathrm{o}_X}x$ for some $(x_\alpha)$ and $x$ in $[-u,u]$. Then there exists a dominating sequence $(y_n)$ in $X$ for this convergence. It follows from $\abs{x_\alpha-x}\le 2u$ for all $\alpha$ that the sequence  $y_n\wedge(2u)$ is still dominating. Since it is contained in $I_u$, we conclude that $x_\alpha\xrightarrow{\sigma \mathrm{o}_{I_u}}x$.
\end{proof}

It now follows that ($\sigma$-)strong first countability passes down to principal ideals.

\begin{cor}\label{so-Iu-sfc}
Every principal ideal of a ($\sigma$-)strongly first countable vector lattice is ($\sigma$-)strongly first countable.
\end{cor}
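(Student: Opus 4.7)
The plan is to lift strong first countability from the order intervals $[-nu,nu]$ to all of $I_u$ via the bounded modification machinery of Section \ref{Sect:Bornological}, using Lemma \ref{so-Iu-int} as the bridge between $\mathrm{o}_{I_u}$ and $\mathrm{o}_X$. I treat the order convergence case in detail; the $\sigma$-order case is entirely analogous, with $\mathrm{o}$ replaced by $\sigma\mathrm{o}$ throughout.

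Fix $u\in X_+$ and assume $(X,\mathrm{o}_X)$ is strongly first countable. Order convergence on any Archimedean vector lattice is locally order bounded, and because $u$ is a strong unit in $I_u$, the bornology $\mathcal B$ of order bounded subsets of $I_u$ admits the countable base $\{[-nu,nu]\}_{n\in\N}$. Consequently $\mathrm{o}_{I_u}$ coincides with its own bounded modification $(\mathrm{o}_{I_u})_{\mathcal B}$, and the same observation that underlies the proof of Proposition \ref{Prop: Bdmod-firs-ctb}(v) then identifies $\mathrm{o}_{I_u}$ with the final convergence structure on $I_u$ with respect to the inclusions $j_n\colon[-nu,nu]\to I_u$.

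Next I would apply Lemma \ref{so-Iu-int} (with $nu$ in place of $u$, noting that $I_{nu}=I_u$) to obtain $\mathrm{o}_{I_u}|_{[-nu,nu]}=\mathrm{o}_X|_{[-nu,nu]}$ for every $n\in\N$. Since strong first countability passes to subspaces, each $\mathrm{o}_X|_{[-nu,nu]}$, and therefore each $\mathrm{o}_{I_u}|_{[-nu,nu]}$, is strongly first countable. The permanence result recorded in Section \ref{Sec:  Sequential} — a final convergence structure with respect to a countable family of injective maps from strongly first countable spaces is itself strongly first countable — now yields that $\mathrm{o}_{I_u}$ is strongly first countable on $I_u$.

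The main conceptual obstacle is that $\mathrm{o}_{I_u}$ and $\mathrm{o}_X|_{I_u}$ are in general \emph{distinct} convergence structures on $I_u$; they are guaranteed to coincide only on order bounded subsets. Thus one cannot simply invoke the subspace permanence of strong first countability on $I_u$. The bounded modification together with Lemma \ref{so-Iu-int} is precisely what circumvents this, by reconstructing $\mathrm{o}_{I_u}$ from its restrictions to a countable base of order intervals, on which the two convergences do agree.
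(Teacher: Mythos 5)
Your proposal is correct and follows essentially the same route as the paper: both identify $\mathrm{o}_{I_u}$ (resp.\ $\sigma\mathrm{o}_{I_u}$) as the final convergence structure with respect to the inclusions $j_n\colon[-nu,nu]\to I_u$ (using that $u$ is a strong unit of $I_u$ and the convergence is locally order bounded), use Lemma \ref{so-Iu-int} plus subspace permanence to get strong first countability on each $[-nu,nu]$, and conclude via the permanence of strong first countability under countable final constructions. Your detour through the bounded modification and Proposition \ref{Prop: Bdmod-firs-ctb}(v) is just a packaged form of the paper's direct ``pass to an order bounded tail'' verification of the final-structure claim.
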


\begin{proof}
Fix $u\ge 0$ and let $n\in\mathbb N$.  It follows from Lemma \ref{so-Iu-int} that $\sigma \mathrm{o}_X$ and $\sigma \mathrm{o}_{I_u}$ agree on $[-nu,nu]$. Since $\sigma \mathrm{o}_X$ is strongly first countable on $X$, it is also strongly first countable on $[-nu,nu]$.  Hence $\sigma \mathrm{o}_{I_u}$ is strongly first countable on $[-nu,nu]$.\medskip

Let $j_n\colon[-nu,nu]\to I_u$ be the inclusion map, where each $[-nu,nu]$ is equipped with the restriction of $\sigma \mathrm{o}_{I_u}$ or, equivalently, of $\sigma \mathrm{o}_X$.  We claim that $\sigma \mathrm{o}_{I_u}$ is the final convergence structure with respect to $\{j_n\}_{n\in\N}$, hence also strongly first countable.  Indeed, for every $n\in\N$, the map $j_n$ is continuous by definition.  Let $\eta$ be a convergence structure on $I_u$ which makes all the $j_n$'s continuous.  It suffices to show that $\sigma \mathrm{o}_{Iu}\ge\eta$.  Suppose that $x_\alpha\xrightarrow{\sigma \mathrm{o}_{I_u}}x$ for some $(x_\alpha)$ and $x$ in $I_u$.  Passing to a tail, we may assume that $(x_\alpha)$ is order bounded in $I_u$, hence there exists $n\in \N$ such that $(x_\alpha)$ and $x$ are in $[-nu,nu]$.  Since $j_n$ is continuous, we conclude that $x_\alpha\goeseta x$.
\end{proof}

\begin{prop}\label{sos1c}
$X$ is ($\sigma$-)strongly first countable if and only if there exists an increasing sequence $(v_n)$ in $X_+$ such that $X=I_{\{v_n\}}$ and $I_{v_n}$ is ($\sigma$-)strongly first countable for every $n\in \N$.
\end{prop}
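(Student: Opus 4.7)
The plan is to reduce both directions to the machinery already established, treating the $\sigma$-case and the order case simultaneously by writing $\eta_X$ for either $\sigma\mathrm{o}_X$ or $\mathrm{o}_X$ (every tool invoked below applies to either).

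For necessity, assume $\eta_X$ is strongly first countable on $X$. Since $\eta_X$ is locally solid and locally order bounded, Corollary \ref{first-c-lemma} furnishes an increasing sequence $(v_n)\subseteq X_+$ with $X=I_{\{v_n\}}$. For each $n\in\N$, Corollary \ref{so-Iu-sfc} then yields that the principal ideal $I_{v_n}$ is ($\sigma$-)strongly first countable, i.e.\ $\eta_{I_{v_n}}$ is strongly first countable on $I_{v_n}$.

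For sufficiency, suppose such a sequence $(v_n)$ exists with each $(I_{v_n},\eta_{I_{v_n}})$ strongly first countable. I will show that $\eta_X$ is the final convergence structure on $X$ associated with the countable family of inclusions $j_n\colon (I_{v_n},\eta_{I_{v_n}})\to X$. Since each $j_n$ is injective and each $(I_{v_n},\eta_{I_{v_n}})$ is strongly first countable, \cite[Proposition 1.6.7(iii)]{Beattie:02} then delivers strong first countability of $\eta_X$. Continuity of each $j_n$ is straightforward: a dominating family (a decreasing sequence, or a set $D\downarrow 0$) in $(I_{v_n})_+$ with infimum zero in $I_{v_n}$ still has infimum zero in $X$, because $I_{v_n}$ is an ideal. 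For the universal property, let $\lambda$ be any convergence structure on $X$ making every $j_n$ continuous and let $x_\alpha\xrightarrow{\eta_X}x$. Local order boundedness confines a tail of $(x_\alpha)$ to an order interval $[-U,U]$; since $X=I_{\{v_n\}}$, both $U$ and $x$ lie in some $I_{v_N}$. Imitating the proof of Lemma \ref{so-Iu-int}, replacing each term $y$ of a dominating family for the $\eta_X$-convergence of the tail by $y\wedge 2U\in I_{v_N}$ shows that the tail $\eta_{I_{v_N}}$-converges to $x$; continuity of $j_N$ then gives $\lambda$-convergence of the tail, and axiom~(ii), applied to the original net as a quasi-subnet of its tail, transfers this to $(x_\alpha)$.

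The main obstacle I foresee is the universal property step in the sufficiency direction. The difficulty is that the restriction $\eta_X|_{I_{v_n}}$ and the intrinsic $\eta_{I_{v_n}}$ need not coincide on all of $I_{v_n}$ — the former may even fail to be locally order bounded, as $x_k=ke_k$ viewed in $\ell_\infty\subseteq\R^{\N}$ shows — so Corollary \ref{first-c-lemma} cannot simply be applied to the restriction. Lemma \ref{so-Iu-int} supplies the indispensable bridge on order intervals, and its combination with local order boundedness is what makes the final-convergence-structure argument work.
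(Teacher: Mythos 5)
Your proposal is correct and follows essentially the same route as the paper: necessity via Corollary \ref{first-c-lemma} together with Corollary \ref{so-Iu-sfc}, and sufficiency by identifying $\eta_X$ as the final convergence structure for the countable family of injective inclusions $j_n\colon I_{v_n}\to X$ and invoking \cite[Proposition 1.6.7(iii)]{Beattie:02}. The only cosmetic difference is in confining the tail: the paper uses the first term of a dominating sequence (which, being decreasing, already lies in some $I_{v_n}$), whereas you use local order boundedness plus truncation of the dominating family as in Lemma \ref{so-Iu-int}; both work, noting that the limit $x$ can be absorbed into the bound (e.g.\ replace $U$ by $U\vee\abs{x}$) so that $\abs{x_\alpha-x}\le 2U$ on the tail.
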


\begin{proof}
Suppose that $X$ is $\sigma$-strongly first countable. By Proposition \ref{first-c-lemma}, there exists an increasing sequence $(v_n)$ in $X_+$ such that $X=I_{\{v_n\}}$. By Corollary \ref{so-Iu-sfc}, $\sigma \mathrm{o}_{I_{v_n}}$ is strongly first countable for every $n\in \N$.\medskip

To prove the converse, equip each $I_{v_n}$ with $\sigma \mathrm{o}_{I_{v_n}}$, which is strongly first countable by assumption, and let $j_n\colon I_{v_n}\to X$ be the inclusion map.  We claim that $\sigma \mathrm{o}_X$ is the final convergence structure with respect to $\{j_n\}_{n\in\N}$; this yields that $\sigma \mathrm{o}_X$ is strongly first countable.

It is clear that if $x_\alpha\xrightarrow{\sigma \mathrm{o}_{I_{v_n}}}x$ in $I_{v_n}$ then $x_\alpha\xrightarrow{\sigma \mathrm{o}_{X}}x$ in $X$, hence every map $j_n$ is $\sigma \mathrm{o}_{I_{v_n}}$-to-$\sigma \mathrm{o}_X$ continuous.  Now let $\eta$ be a convergence structure on $X$ which makes all the $j_n$'s continuous. Suppose that $x_\alpha\xrightarrow{\sigma \mathrm{o}_X}x$ for some $(x_\alpha)$ and $x$ in $X$.  it suffices to show that $x_\alpha\goeseta x$. Let $(y_m)$ be a dominating sequence for $(x_\alpha)$.  Find $n\in\N$ such that $x,y_1\in I_{v_n}$.  It follows that a tail of $(x_\alpha)$ is contained in $I_{v_n}$ and converges there to $x$ with respect to $\sigma o_{I_{v_n}}$.  By the continuity of $j_n$, we conclude that $x_\alpha\goeseta x$.
\end{proof}

\begin{cor}
If $X$ is ($\sigma$-)strongly first countable and admits a locally solid completely metrizable topology, then it has a strong unit.
\end{cor}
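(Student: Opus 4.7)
The plan is to combine Proposition \ref{sos1c} with Remark \ref{rem-strong} essentially directly, so this corollary should only require a brief argument.

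First I would apply Proposition \ref{sos1c}: ($\sigma$-)strong first countability of $X$ yields an increasing sequence $(v_n)$ in $X_+$ with $X = I_{\{v_n\}} = \bigcup_{n\in\N}I_{v_n}$. (For the strongly first countable case this also follows from Corollary \ref{first-c-lemma}, since order convergence is locally order bounded.) This is the only structural consequence of ($\sigma$-)strong first countability that is needed here.

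Next, I would invoke Remark \ref{rem-strong}: once $X$ admits a completely metrizable locally solid topology, the existence of an increasing sequence $(v_n)\subseteq X_+$ with $X=\bigcup_{n\in\N}I_{v_n}$ is equivalent to $X$ possessing a strong unit. Concretely, after rescaling $(v_n)$ so that $\sum v_n$ converges in the metric topology, the element $e\defeq \sum v_n$ dominates each $v_n$, hence dominates every element of $X = \bigcup_n I_{v_n}$, and is therefore a strong unit.

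There is no real obstacle; the content of the result lies entirely in Proposition \ref{sos1c} and Remark \ref{rem-strong}, and the corollary is simply their composition. The only point that deserves a moment's care is that Proposition \ref{sos1c} is phrased for both order and $\sigma$-order convergence, so the same argument applies uniformly to both cases of the ``($\sigma$-)'' qualifier.
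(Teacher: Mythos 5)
Your argument is correct and is exactly the intended one: the paper states this as an immediate consequence of Proposition \ref{sos1c} (which supplies the increasing sequence $(v_n)$ with $X=I_{\{v_n\}}=\bigcup_n I_{v_n}$) combined with Remark \ref{rem-strong} (rescale so that $e=\sum v_n$ converges in the complete metric, whence $v_n\le e$ and $X=I_e$). Nothing further is needed.
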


We now investigate how ($\sigma$-)strong first countability is transferred to variously dense sublattices.  Recall that a sublattice $Y$ of $X$ is \term{order dense} in $X$ if for every $0<x\in X$ there exists $A\subseteq Y_+$ so that $x=\bigvee A$.  We call $Y$ \term{super order dense} in $X$ if for every $0<x\in X$ there exists $\left\{y_{n}\right\}_{n\in\N}\subseteq Y_{+}$ such that $x=\bigvee\limits_{n\in\N} y_{n}$.

\begin{prop}\label{dense}
Let $Y\subseteq X$ be a majorizing sublattice. \begin{enumerate}[(i)]

    \item[(i)] If $Y$ is order dense in $X$, then $\mathrm{o}_{Y}=\mathrm{o}_{X}|_{Y}$. Moreover, $X$ is strongly first countable if and only if $Y$ is strongly first countable.

    \item[(ii)] If $Y$ is super order dense in $X$, then $\sigma\mathrm{o}_{Y}=\sigma\mathrm{o}_{X}|_{Y}$ and, moreover, $X$ is $\sigma$-strongly first countable if and only if $Y$ is $\sigma$-strongly first countable.

\end{enumerate}
\end{prop}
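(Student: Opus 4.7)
The plan is to prove both parts in parallel: establish the convergence-structure equality in each part first, then the $(\sigma$-$)$strong first countability equivalence, bootstrapping part (i)'s strong first countability through part (ii) via Corollary~\ref{main0}.

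The technical heart is a key lemma: if $Y$ is majorizing and order dense in $X$, then for every $x \in X_+$,
\[
x = \textstyle\bigwedge_X \{y \in Y_+ \mid y \ge x\}.
\]
I would prove this by contradiction. Suppose $w \ge 0$ is a lower bound of the set above with $w \not\le x$, so $u \defeq (w-x)^+ > 0$. By order density there is $0 < z \in Y$ with $z \le u$, and using the Riesz identity $x + (w-x)^+ = x \vee w$, any $y$ in the set satisfies $y \ge x \vee w \ge x + z$; hence $y - z \in Y_+$ with $y - z \ge x$, so $y - z$ is again in the set, forcing $y \ge w + z$. Iterating yields $y \ge w + nz$ for every $n$, contradicting the Archimedean property of $X$.

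The direction ``$Y$-convergence implies $X$-convergence'' in both parts is immediate from the regularity of order dense sublattices. For the reverse in part (i), a witness $D \downarrow 0$ in $X_+$ for $\mathrm{o}_X$-convergence is replaced by $D' \defeq \{y \in Y_+ \mid y \ge d \text{ for some } d \in D\}$; this set still dominates the net, and the key lemma together with regularity gives $\bigwedge_Y D' = 0$. For part (ii) the same idea must be executed \emph{sequentially}: from an $X$-dominating sequence $(u_n) \downarrow 0$, super order density supplies approximations $z_{n,k} \uparrow u_n$ in $Y_+$, and one inductively builds a decreasing $(w_n) \subseteq Y_+$ with $w_n \ge u_n$ using the fact that if $w \in Y_+$ dominates $u$ and $z \in Y_+$ satisfies $z \le w - u$, then $w - z \in Y_+$ still dominates $u$; a diagonal choice of $z$'s across these approximations forces $\bigwedge_X w_n = 0$.

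For the $(\sigma$-$)$strong first countability, the direction from $X$ to $Y$ is immediate from the equality of convergences and subspace preservation. For the converse in part (ii), Proposition~\ref{sos1c} supplies an increasing $(v_n) \subseteq Y_+$ with $Y = I_{\{v_n\}}^Y$ and each $\sigma\mathrm{o}_{I_{v_n}^Y}$ strongly first countable; by majorizing $X = I_{\{v_n\}}^X$ as well, each pair $I_{v_n}^Y \subseteq I_{v_n}^X$ is still super order dense and majorizing (both with strong unit $v_n$), and a countable local basis for $\sigma\mathrm{o}_{I_{v_n}^X}$ is obtained from a fixed intersection-closed local basis for $\sigma\mathrm{o}_{I_{v_n}^Y}$ by taking $X$-solid hulls, with the defining properties of Proposition~\ref{prop:  Existence of convergent directed fixed local basis} verified via the already-proved equality. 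A second application of Proposition~\ref{sos1c} then gives $\sigma\mathrm{o}_X$ strongly first countable. For the converse in part (i) I bootstrap through part (ii): $Y$ strongly first countable implies by Corollary~\ref{main0} that $Y$ has CSP, which transfers to $X$ because order density sends any uncountable order-bounded disjoint family in $X_+$ to a corresponding family in $Y_+$; CSP of $X$ combined with order density then upgrades $Y$ to super order dense in $X$, since for each $x \in X_+$ the set $\{x - y \mid y \in Y_+,\ y \le x\}$ has infimum $0$ and, by CSP, admits a countable subfamily with infimum $0$; part (ii) now gives $X$ $\sigma$-strongly first countable, and Corollary~\ref{main0} concludes. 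The principal obstacle is the inductive sequence construction in part (ii)'s equality, where monotonicity, $Y$-membership, domination, and $\bigwedge_X = 0$ must all be arranged simultaneously; this is the only point where super order density (as opposed to mere order density) is genuinely needed.
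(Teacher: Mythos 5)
Your architecture is essentially correct and genuinely different from the paper's: the paper proves (ii) directly on the whole space (and notes that (i) is proved by the same argument), whereas you bootstrap (i) through (ii) by transferring the CSP from $Y$ to $X$ via order density, using the CSP to upgrade order density of $Y$ to super order density, and then invoking Corollary~\ref{main0}; that bootstrap is correct and avoids redoing the argument with uncountable dominating sets. Your treatment of the equality $\sigma\mathrm{o}_Y=\sigma\mathrm{o}_X|_Y$ is also sound in substance, with two caveats: the approximants you need increase to $w_{n-1}-u_n$ (or to $y-u_n$ for a fixed majorant $y\in Y$), not to $u_n$ as written; and the insistence on a \emph{decreasing} dominating sequence is unnecessary, since the definition of $\sigma$-order convergence only asks for a countable family, each member dominating a tail, with infimum $0$ --- for each $n$ majorization plus super order density give a countable $A_n\subseteq Y$ with $\bigwedge A_n=u_n$, and $\bigcup_n A_n$ already does the job (this is exactly what the paper does), which dissolves what you call the principal obstacle.

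The genuine gap is in the converse first-countability transfer. The detour through Proposition~\ref{sos1c} and principal ideals is harmless but buys nothing, since the same solid-hull construction works on all of $X$; the problem is that you only \emph{assert} that the $X$-solid hulls $\mathcal H'=\{\Sol(A)\mid A\in\mathcal H\}$ of a fixed intersection-closed local basis $\mathcal H$ for $\sigma\mathrm{o}_Y$ form a local basis for $\sigma\mathrm{o}_X$, ``verified via the already-proved equality''. The equality of the two convergences on $Y$ does not deliver this: a convergent filter $\mathcal F$ lives on $X$, its trace on $Y$ need not be (a base of) a $\sigma\mathrm{o}_Y$-convergent filter, and condition (iv) of Proposition~\ref{prop:  Existence of convergent directed fixed local basis} must be checked for $[\mathcal F\cap\mathcal H']$ in $X$. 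The paper's proof consists precisely of this missing work: from an $X$-dominating sequence $(u_n)$ with $[-u_n,u_n]\in\mathcal F$ it builds countable sets $B_n\subseteq Y$ with $\bigwedge B_n=u_n$, forms the auxiliary filter on $Y$ generated by the intervals $[-b,b]_Y$ with $b\in B^\wedge$, applies the $Y$-basis property there to obtain sets $A_k\in\mathcal H$ squeezed between some $[-b,b]_Y$ and $[-z_k,z_k]_Y$ with $z_k\downarrow 0$, and only then concludes $[-u_n,u_n]\subseteq\Sol(A_k)\subseteq[-z_k,z_k]$, so that $\Sol(A_k)\in\mathcal F\cap\mathcal H'$ and $[\mathcal F\cap\mathcal H']\goesso 0$. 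Until you supply this sandwiching argument (or an equivalent), the heart of the converse direction is missing.
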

\begin{proof}
Since the proofs of (i) and (ii) follow similar arguments, we only prove (ii).  Suppose that $Y$ is super order dense in $X$.  Observe that since $Y$ is majorizing in $X$, for every $x\in X$ there exists $\left\{y_{n}\right\}_{n\in\N}\subseteq Y$ such that $x=\bigwedge\limits_{n\in\N} y_{n}$.  Indeed, let $x\in X$ and $x<y\in Y$.  Then $0<y-x$ so there exists $\{z_n\}_{n\in\N}\subseteq Y$ so that $y-x=\bigvee\limits_{n\in\N} z_{n}$.  Hence $x=\bigwedge\limits_{n\in\N} y-z_{n}$.

Let $(y_\alpha)$ be a net in $Y$.  Suppose that $y_\alpha\xrightarrow{\sigma \mathrm{o}_Y}0$.  Let $(u_n)$ be a dominating sequence in $Y$.  Since $Y$ is order dense in $X$, we have $u_n\downarrow 0$ in $X$, hence $(u_n)$ is a dominating sequence in $X$ so that $y_\alpha\xrightarrow{\sigma \mathrm{o}_{X}}0$.

Conversely, suppose that $y_\alpha\xrightarrow{\sigma \mathrm{o}_{X}}0$.  Let $(v_n)$ be a dominating sequence in $X$.  For every $n\in\mathbb N$ there exists a countable set $A_n\subseteq Y$ so that $\bigwedge A_n=v_n$.  Then $\bigwedge \bigcup\limits_{n=1}^{\8} A_n=0$, and this set witnesses $y_\alpha\xrightarrow{\sigma \mathrm{o}_{Y}}0$.

It now follows immediately that if $X$ is $\sigma$-strongly first countable then $Y$ is $\sigma$-strongly first countable.  Let us prove the converse.\medskip

Assume that $Y$ is strongly first countable.  Let $\mathcal H$ be a countable intersection-closed local base for $\sigma\mathrm{o}_Y$ at $0$.  Let
  \begin{math}
    \mathcal H'=\bigl\{\Sol(A)\mid A\in\mathcal H\bigr\},
  \end{math}
where $\Sol(A)$ is the solid hull of $A$ in $X$.  We claim that $\mathcal H'$ is a countable local basis for $\sigma\mathrm{o}_{X}$.

Suppose that $\mathcal F\xrightarrow{\sigma\mathrm{o}_{X}}0$.  Then there exists a sequence $u_n\downarrow 0$ in $X$ such that $[-u_n,u_n]\in\mathcal F$ for every $n\in \N$.  For each $n$ find a  countable set $B_{n}\subseteq Y$ such that $\bigwedge B_{n}= u_n$, and let $B\defeq\bigcup\limits_{n=1}^{\8} B_n$.  It follows that $\bigwedge B=0$ in $X$ and, therefore, in $Y$.  Let $\mathcal G$ be the filter on $Y$ generated by $\{\bigl[-b,b\bigr]_Y \mid b\in B^\wedge\}$, the subscript $Y$ indicates that the intervals are taken in $Y$.  Since $B^\wedge$ is countable and $\bigwedge B^\wedge = \bigwedge B = 0$, it follows that $\mathcal G\xrightarrow{\sigma\mathrm{o}_{Y}}0$.  By assumption, $[\mathcal G\cap\mathcal H]\xrightarrow{\sigma\mathrm{o}_{Y}}0$.  Hence there exists a sequence $(z_k)$ in $Y$ such that $z_k\downarrow 0$ in $Y$ and, therefore, in $X$, and $[-z_k,z_k]_Y\in[\mathcal G\cap\mathcal H]$ for all $k$.  Therefore, for every $k\in\N$, there exists $A_k\in\mathcal G\cap\mathcal H$ such that $A_k\subseteq[-z_k,z_k]_Y$.

Let $k\in \N$.  It follows from $A_k\in\mathcal G$ that $A_k$ contains $\bigl[-b,b\bigr]_Y$ for some $b\in B^\wedge$.  But there exists $n\in\N$ so that $u_n\le b$, so
  \begin{displaymath}
    [-u_n,u_n]\subseteq\bigl[-b,b\bigr]
    \subseteq\Sol(A_k)\subseteq[-z_k,z_k],
  \end{displaymath}
where the intervals are in $X$.  It follows that $\Sol(A_k)$ is in $\mathcal F$ and, therefore, in $\mathcal F\cap\mathcal H'$.  We conclude that $[-z_k,z_k]\in[\mathcal F\cap\mathcal H']$.  Thus, $[\mathcal F\cap\mathcal H']\xrightarrow{\sigma\mathrm{o}_{X}}0$.
\end{proof}

The following two results will be used in Section \ref{Section:  The main characterisation}.  The latter is an immediate consequence of Proposition \ref{dense}.

\begin{lem}\label{cor}
If $X$ is a unital norm dense sublattice of $C(K)$, then it is super order dense.
\end{lem}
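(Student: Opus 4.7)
The plan is as follows. Given $0<x\in C(K)$, I aim to produce a sequence $(y_n)\subseteq X_+$ with $x=\bigvee_n y_n$ in $C(K)$ by approximating $x$ from below via slightly shrunken norm-approximants.

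Using norm density, choose $z_n\in X$ with $\norm{z_n-x}_\infty<\tfrac1n$. Since $X$ contains the unit and is a sublattice, the element
\[
y_n\defeq \Bigl(z_n-\tfrac1n\cdot\mathbf{1}\Bigr)^+
\]
lies in $X_+$. Two pointwise bounds do the work. First, from $z_n(k)-\tfrac1n\le x(k)$ and $x(k)\ge 0$ we get $y_n(k)=\max(z_n(k)-\tfrac1n,0)\le x(k)$, so $y_n\le x$ in $C(K)$. Second, using the Lipschitz bound $\bigabs{a^+-b^+}\le\abs{a-b}$ together with $x(k)=x(k)^+$, we obtain $\norm{y_n-x}_\infty\le \tfrac{2}{n}$, so $y_n\to x$ in norm and in particular pointwise.

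Now I verify $x=\bigvee_n y_n$ in $C(K)$. Since $y_n\le x$ for all $n$, $x$ is an upper bound. If $g\in C(K)$ is any upper bound, then for every $k\in K$ we have $g(k)\ge y_n(k)$ for all $n$; passing to the limit, $g(k)\ge x(k)$, so $g\ge x$. Hence $x$ is the least upper bound in $C(K)$, which is exactly the super order density requirement for an arbitrary $0<x\in C(K)$.

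There is no real obstacle; the only point requiring a tiny bit of care is that the supremum must be computed in the overlying lattice $C(K)$ (not merely pointwise), but this is handled by the preceding argument since any continuous upper bound dominates $x$ pointwise, hence as elements of $C(K)$. The use of the unit $\mathbf{1}$ is essential in order to form $z_n-\tfrac{1}{n}\cdot \mathbf{1}\in X$, which is where the hypothesis of unitality enters.
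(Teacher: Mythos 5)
Your proof is correct and follows essentially the same route as the paper: take a norm approximant $z_n\in X$, subtract a small multiple of the unit, and pass to the positive part, so that $y_n\in X_+$ satisfies $y_n\le x$ and $y_n\to x$ uniformly, whence $x=\bigvee_n y_n$ in $C(K)$. The only cosmetic difference is the choice of tolerances ($1/n$ versus the paper's $1/(2n)$) and that you verify the supremum via an upper-bound argument rather than the paper's two-sided bound $f-\frac1n\1\le y_n\le f$; both are fine.
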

\begin{proof}
Let $f\in C(K)$ and $n\in\mathbb N$. Find $v_n\in X$ such that $\|v_n-f\|<\frac{1}{2n}$. Let $y_n=(v_n-\frac{1}{2n}\1)^+$, then $y_n\in X_+$, and $f-\frac1n\1\le y_n\le f$ yields $\bigvee\limits_{n\in\N}y_{n}= f$.
\end{proof}

\begin{cor}\label{corr}
If $X$ is a unital norm dense sublattice of $C(K)$, then it is ($\sigma$-)strongly first countable if and only if $C(K)$ is ($\sigma$-)strongly first countable.
\end{cor}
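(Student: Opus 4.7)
The plan is to invoke Proposition \ref{dense} directly, with $Y=X$ and the ambient space taken to be $C(K)$. To do so, I need to verify three sublattice hypotheses: that $X$ is a sublattice of $C(K)$ (given), that $X$ is majorizing in $C(K)$, and that $X$ is (super) order dense in $C(K)$.

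First I would check that $X$ is majorizing in $C(K)$. Because $X$ contains the unit $\1$ of $C(K)$, given any $f\in C(K)$, the element $\lceil\|f\|_\infty\rceil\,\1$ lies in $X$ and dominates $f$; hence $X$ is majorizing. Next, the super order density of $X$ in $C(K)$ is precisely the content of Lemma \ref{cor}. Since super order density implies order density (any supremum of a countable set is in particular a supremum of a set in $Y_+$), $X$ is also order dense in $C(K)$.

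With these hypotheses in hand, both parts of Proposition \ref{dense} apply to the pair $X\subseteq C(K)$. Part (i) yields that $X$ is strongly first countable if and only if $C(K)$ is, while part (ii) yields the analogous equivalence for $\sigma$-strong first countability. This establishes the corollary. There is no real obstacle here: the work has been done in Proposition \ref{dense} and Lemma \ref{cor}, and the corollary is just the combination of the two, with the trivial verification that the unit element makes $X$ majorizing.
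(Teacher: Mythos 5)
Your proof is correct and follows exactly the paper's route: the corollary is an immediate consequence of Proposition \ref{dense} applied to $X\subseteq C(K)$, with Lemma \ref{cor} supplying super order density and the unit $\1\in X$ giving the majorizing hypothesis. Nothing further is needed.
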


\subsection{A characterisation of $\sigma$-strong first countability}

We introduce a condition on $X$ that characterises $\sigma$-strong first countability.  Here we denote by $X^\delta$ the Dedekind completion of $X$.

\begin{thm}\label{sigmastrong}
$X$ is $\sigma$-strongly first countable if and only if there exists a countable set $C$ in $X_+^\delta$ such that for every $x_n\downarrow 0$ in $X$ there exists $D\subseteq C$ so that $\bigwedge_{X^{\delta}} D=0$ and for every $d\in D$ there exists $n\in\mathbb N$ such that $x_n\le d$.
\end{thm}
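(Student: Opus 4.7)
For necessity, Proposition \ref{prop:  Existence of convergent directed fixed local basis} supplies a countable fixed intersection-closed local base $\mathcal{H}$ for $\sigma\mathrm{o}$ at $0$. Since $\sigma\mathrm{o}$ is locally solid (Theorem \ref{def-ls}) and locally order bounded, a short argument (replace each $H\in\mathcal{H}$ with its solid hull, and discard any $H$ that fails to be contained in an order interval of some element of the base already in $\mathcal{H}$) shows that we may assume every $H\in\mathcal{H}$ is solid and order bounded in $X$. Consequently $c_H:=\sup_{X^\delta}H$ exists in $X^\delta_+$, and $C:=\{c_H:H\in\mathcal{H}\}$ is countable. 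Given $x_n\downarrow 0$ in $X$, the filter $\mathcal{F}:=[\{[-x_n,x_n]:n\in\N\}]$ is $\sigma\mathrm{o}$-null, so $[\mathcal{H}\cap\mathcal{F}]\goesso 0$, yielding $u_k\in X_+$ with $\bigwedge u_k=0$ and $[-u_k,u_k]\supseteq H_k$ for some $H_k\in\mathcal{H}\cap\mathcal{F}$. From $H_k\in\mathcal{F}$ we obtain $m_k$ with $[-x_{m_k},x_{m_k}]\subseteq H_k$, giving $x_{m_k}\le c_{H_k}\le u_k$. The set $D:=\{c_{H_k}:k\in\N\}\subseteq C$ then satisfies $\bigwedge_{X^\delta}D\le\bigwedge u_k=0$, and each $d\in D$ dominates some $x_n$.

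For sufficiency, I first reduce to the case that $X$ has a strong unit. Since $X$ is majorizing in $X^\delta$, pick $w_c\in X_+$ with $w_c\ge c$ for each $c\in C$; enumerating $C=\{c_n\}$ and setting $v_n:=w_{c_1}\vee\ldots\vee w_{c_n}$, an application of the hypothesis to the sequence $(x/k)_{k\ge 1}\downarrow 0$ (for arbitrary $x\in X_+$) produces $d\in C$ and $k\in\N$ with $x\le kd\le kw_d\le kv_m$, so $X=I_{\{v_n\}}$. By Proposition \ref{sos1c} it suffices to prove each $I_{v_n}$ is $\sigma$-strongly first countable, and the hypothesis transfers to $I_{v_n}$ via $C_n:=\{c\wedge kv_n:c\in C,k\in\N\}\subseteq(I_{v_n})^\delta$, where the inclusion $(I_{v_n})^\delta\subseteq X^\delta$ and the bound $y_m\le Kv_n$ for any $y_m\downarrow 0$ in $I_{v_n}$ let us produce the required $D'=\{d\wedge Kv_n:d\in D\}$ with $\bigwedge_{(I_{v_n})^\delta}D'=0$. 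We may therefore assume $X$ has a strong unit~$e$.

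With a strong unit in hand, the natural candidate local base is $\mathcal{H}:=\{\{x\in X:|x|\le c\}:c\in C\}$ closed under finite intersections, and for any $\mathcal{F}\goesso 0$ with witness $x_n\downarrow 0$ the hypothesis guarantees that $H_d:=\{x\in X:|x|\le d\}$ lies in $\mathcal{H}\cap\mathcal{F}$ for every $d\in D$. \textbf{The principal obstacle} is that elements of $C$ generally lie in $X^\delta\setminus X$, so the sets $H_d$ are solid but not order intervals of~$X$, while $\sigma\mathrm{o}$-convergence of $[\mathcal{H}\cap\mathcal{F}]$ requires genuine $X$-intervals $[-u_k,u_k]$ with $\bigwedge u_k=0$ to lie in the filter. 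To overcome this I would invoke Kakutani's representation theorem to realise $X$ as a norm-dense unital sublattice of some $C(K)$, use Lemma \ref{cor} and Corollary \ref{corr} to reduce the problem to $C(K)$, translate the hypothesis from $X$ to $C(K)$ by constructing for each $f_n\downarrow 0$ in $C(K)$ a decreasing sequence $(w_n)$ in $X_+$ with $w_n\ge f_n$ and $w_n\downarrow 0$ (using Urysohn and the unital norm-density to build $w_n$ as the meet of $f$-approximants from above shifted by $1/n$), and finally exploit the Dedekind completeness of $C(K)^\delta=X^\delta$ together with compactness of $K$ to extract the required $C(K)$-intervals from $[\mathcal{H}\cap\mathcal{F}]$.
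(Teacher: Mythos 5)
Your necessity argument is essentially the paper's: restrict a countable fixed intersection-closed local basis at $0$ to its order bounded (and solid) members, let $C$ consist of the suprema in $X^\delta$ of these sets, and test the basis against the filter generated by the intervals $[-x_n,x_n]$. That half is fine, with one quibble: one should keep only the solid/order bounded members of the basis (as the paper does, via the remark before Proposition \ref{muB-fc0} and the comment after Theorem \ref{def-ls}) rather than ``replace each $H$ by its solid hull'', since the solid hull of a basis set need not belong to the filter one is testing.

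The sufficiency half has a genuine gap, and it is exactly the ``principal obstacle'' you flag yourself. The reductions you perform (to $X=I_{\{v_n\}}$ and then, via Proposition \ref{sos1c}, Lemma \ref{cor} and Corollary \ref{corr}, to a unital norm-dense sublattice of $C(K)$, together with the transfer of the hypothesis) are workable, but they leave the real difficulty untouched: the sets $H_d=\{x\in X \mid \abs{x}\le d\}$ have bounds $d\in X_+^\delta\setminus X$, and to conclude $[\mathcal H\cap\mathcal F]\goesso 0$ one must exhibit a sequence $u_k\downarrow 0$ \emph{in the lattice itself} whose intervals contain finite intersections of the $H_d$, i.e.\ $u_k$ must dominate finite infima of elements of $D$. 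Your last sentence merely asserts that Dedekind completeness of $X^\delta$ and compactness of $K$ allow such an ``extraction''; no mechanism is offered, and no general domination principle is available: a sequence decreasing to $0$ in $X^\delta$ need not be dominated termwise by a sequence in $X$ decreasing to $0$ --- this is precisely why the theorem is stated with $C\subseteq X_+^\delta$ rather than with $C\subseteq X_+$ as in \eqref{so-sfc-CD}. (Concretely, for $X=C(\beta\N\backslash\N)$, where $x_k\downarrow 0$ forces $\norm{x_k}\to 0$, one can produce components $e_k$ of $\1$ in $X^\delta$ with $e_k\downarrow 0$ and $\norm{e_k}=1$, so no dominating sequence from $X$ decreasing to $0$ exists.) So any completion of your outline would have to make genuine use of the hypothesis at this point, and none is indicated. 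The paper does not overcome this obstacle inside $X$; it sidesteps it by working in $X^\delta$: define a locally solid convergence $\eta$ on $X^\delta$ by declaring $\mathcal F\goeseta 0$ when $\mathcal F$ contains $[-x_n,x_n]_{X^\delta}$ for some $x_n\downarrow 0$ in $X$, note that $\eta|_X=\sigma\mathrm{o}$, and show that the countable family $\{[-c,c]_{X^\delta}\mid c\in C^\wedge\}$ is a local basis for $\eta$ at $0$ --- in $X^\delta$ the elements of $C$ belong to the space, so these are honest order intervals --- and then use the fact that strong first countability passes to subspaces. Without a device of this kind, your proposal does not close.
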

\begin{proof}
Necessity: Let $\mathcal H=\{H_{n}\}$ be a countable local basis at $0$ for the $\sigma$-order convergence on $X$. According to the comment before Proposition \ref{muB-fc0} we may assume that $H_{n}$ is order bounded, for every $n\in\N$. Put
  \begin{displaymath}
    C:=\bigl\{\bigvee\nolimits_{X^{\delta}}\abs{H}\mid H\in\mathcal H\bigr\},
  \end{displaymath}
  where $\abs{H}=\bigl\{\abs{h}\mid h\in H\bigr\}$. Let $x_n\downarrow 0$ in $X$. Let $\mathcal F$ be the filter generated by
  \begin{math}
    \bigl\{[-x_n,x_n]_{X}\mid n\in\mathbb N\bigr\}.
  \end{math}
  Clearly, $\mathcal F\goesso 0$ and, therefore, $[\mathcal F\cap\mathcal H]\goesso 0$. Then there is a sequence $(y_m)$ in $X$ such that $y_m\downarrow 0$ and $[-y_m,y_m]\in[\mathcal F\cap\mathcal H]$ for every $m$. For every $m$, fix some $n_{m}$ such that $H_{n_{m}}\in\Fo$ and $H_{n_{m}}\subseteq[-y_m,y_m]$; put $d_m:=\bigvee_{X^{\delta}}\abs{H_{n_{m}}}$. Let $D:=\{d_m\mid m\in\mathbb N\}\subseteq C$. For every $m$, $H_{n_{m}}\in\mathcal F$ yields $[-x_n,x_n]\subseteq H_{n_{m}}$ for some $n$. It now follows from
  \begin{math}
    [-x_n,x_n]\subseteq H_{n_{m}}\subseteq[-y_m,y_m]
  \end{math}
  that $x_n\le d_m\le y_m$. As $y_m\downarrow 0$ conclude that $\bigwedge_{X^{\delta}} D=0$.\medskip

Sufficiency: Define a convergence structure $\eta$ on $X^\delta$ as follows:  $\mathcal{F}\goeseta 0$ if there exists a sequence $x_n\downarrow 0$ in $X$ so that $[-x_n,x_n]_{X^\delta}\in\mathcal{F}$ for every $n\in \N$; equivalently, $x_\alpha \goeseta 0$ if there exists a sequence $x_n\downarrow 0$ in $X$ so that $[-x_n,x_n]_{X^\delta}$ contains a tail of $(x_\alpha)$ for every $n\in \N$.  For $x\in X^\delta\setminus \{0\}$ we let $\mathcal{F}\goeseta x$ if $|\mathcal{F}-x|\goeseta 0$; equivalently, $x_\alpha \goeseta x$ if $|x_\alpha-x|\goeseta 0$.  Using \cite[Theorem 2.1]{Bilokopytov:23} it is easy to see that $\eta$ is a locally solid convergence structure on $X^\delta$.

Clearly, $\eta|X=\sigma{\rm o}$, so it suffices to show that that $\eta$ is strongly first countable.  Let $C$ be as in the statement of the theorem, and let $C^\wedge$ be the set of all finite infima of elements of $C$. Put
\begin{math}
    \mathcal H=\bigl\{[-c,c]_{X^\delta}\mid c\in C^\wedge\bigr\}.
  \end{math}
  Then $\mathcal H$ is countable and closed under finite intersections. Suppose $\mathcal F\goeseta 0$. There exists a sequence $(x_n)$ such that $x_n\downarrow 0$ in $X$ and $[-x_n,x_n]_{X^\delta} \in\mathcal F$ for every $n\in \N$.  Let $D$ be as in the statement of the theorem. For every $d\in D$, there exists $n\in\N$ such that $x_n\le d$, and, therefore, $[-d,d]\in\mathcal F$. Clearly, $[-d,d]\in\mathcal H$.  It follows from $\bigwedge D=0$ that $[\mathcal F\cap\mathcal H]\goeseta 0$.
\end{proof}

We proceed with a slight modification of the condition from the preceding result which is a convenient sufficient condition for $\sigma$-strong first countability.  This condition, when applied to a suitable $C(K)$ space, enables us to show that $\sigma$-strong first countability does not imply strong first countability.
\[
\parbox[c]{9.5cm}{There exists a countable set $C$ in $X_+$ such that for every $x_n\downarrow 0$ there exists $D\subseteq C$ so that $\bigwedge D=0$ and for every $d\in D$ there exists $n\in\mathbb N$ such that $x_n\le d$.} \label{so-sfc-CD} \tag{$\star$}
\]

In other words $C$ is such that for every sequence $x_n\downarrow 0$ in $X$ we have \linebreak $\bigwedge\bigcup\limits_{n\in\N}\left(C\cap\left[x_{n},+\8\right)\right)=0$. It follows immediately from Theorem \ref{sigmastrong} that \eqref{so-sfc-CD} implies $\sigma$-strong first countability of $X$, and the two conditions are equivalent if $X$ is order complete. As we show below, \eqref{so-sfc-CD} is also equivalent with $\sigma$-strong first countability if $X$ is \term{almost $\sigma$-complete}; that is, there exists a $\sigma$-order complete space $Y$ containing $X$ is a super order dense sublattice.

In order to verify the claims made above, we require the following results.

\begin{prop}\label{odmstar}
Let $Y$ be a super order dense and majorizing sublattice of $X$.  Then $X$ satisfies \eqref{so-sfc-CD} if and only if $Y$ does.
\end{prop}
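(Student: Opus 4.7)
The plan is to prove both implications by transferring witnessing sets between $X$ and $Y$, exploiting the fact (already established inside the proof of Proposition~\ref{dense}) that the hypotheses on $Y$ guarantee that every $x\in X$ is the infimum (in $X$) of a decreasing sequence from $Y$. Combined with order density, this gives two useful facts we shall freely invoke: for $D\subseteq Y$ one has $\bigwedge_X D = 0$ if and only if $\bigwedge_Y D = 0$, and likewise a sequence decreases to $0$ in $Y$ if and only if it decreases to $0$ in $X$.

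For the implication ``$Y$ satisfies \eqref{so-sfc-CD}'' $\Rightarrow$ ``$X$ satisfies \eqref{so-sfc-CD}'', I claim that any countable witness $C\subseteq Y_+$ in $Y$ also serves (via $Y_+\subseteq X_+$) as a witness in $X$. The key constructive step, given $x_n\downarrow 0$ in $X$, is to build a sequence $w_n\downarrow 0$ in $Y$ with $w_n\ge x_n$: for each $n$, pick $(y_{n,k})_k\subseteq Y$ with $y_{n,k}\downarrow x_n$ (available by the Proposition~\ref{dense} argument), and set $w_n\defeq \bigwedge_{i,j\le n} y_{i,j}$. Then $(w_n)\subseteq Y_+$ is decreasing, satisfies $w_n\ge x_n$, and a short diagonal check gives $\bigwedge w_n = 0$. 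Applying \eqref{so-sfc-CD} in $Y$ to $(w_n)$ yields $D\subseteq C$ with $\bigwedge_Y D=0$ and some $w_{n(d)}\le d$ for each $d\in D$; order density promotes this to $\bigwedge_X D=0$, while $x_n\le w_n\le d$ preserves the domination.

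For the reverse direction, I will start from a countable witness $C\subseteq X_+$ for $X$ and enlarge it into a witness in $Y$. For each $c\in C$, fix a sequence $(c_k)\subseteq Y$ with $c_k\downarrow c$, and let
\[
C'\defeq \bigl\{c_k \mid c\in C,\ k\in\N\bigr\}\subseteq Y_+,
\]
which is countable. Given $y_n\downarrow 0$ in $Y$, the same holds in $X$, so \eqref{so-sfc-CD} in $X$ yields $D\subseteq C$ with $\bigwedge_X D=0$ and $y_{n(d)}\le d$ for each $d\in D$. Setting $D'\defeq \{d_k\mid d\in D,\ k\in\N\}\subseteq C'$, the inequalities $\bigwedge_X D'\le \bigwedge_k d_k = d$ give $\bigwedge_X D'=0$, hence $\bigwedge_Y D'=0$ by order density. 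The domination $y_{n(d)}\le d\le d_k$ for each $d_k\in D'$ closes the argument.

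The main obstacle is the diagonal construction of the dominating sequence $(w_n)$ in the first direction, since super order density as stated gives suprema rather than decreasing sequences and it is not immediately clear that one can dominate an $X$-null decreasing sequence by a $Y$-null one; once that small lemma is isolated, the rest of the argument is routine bookkeeping with order density and the observation that a countable union of countable sets remains countable.
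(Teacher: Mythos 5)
Your proof is correct and takes essentially the same route as the paper's: your diagonal sequence $w_n=\bigwedge_{i,j\le n}y_{i,j}$ is exactly the paper's $z_m$, and your enlarged witness $C'$ (replacing each $c\in C$ by a countable family in $Y$ with infimum $c$) is the paper's $A=\bigcup_{c\in C}A_c$, with the same transfer of infima via order density in both directions.
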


\begin{proof}
Assume that $Y$ satisfies \eqref{so-sfc-CD}, and let $C\subseteq Y_+$ be as given by the condition.  Let $x_n\downarrow 0$ in $X$.  As shown in the proof of Proposition \ref{dense}, for every $n\in \N$ there exists a sequence $y_k^{(n)} \downarrow x_n$ in $Y$. For $m\in\N$ let $z_{m}:=\bigwedge\limits_{n,k\le m} y_k^{(n)}\ge x_m$. Then, $z_m\downarrow 0$. Let $D$ be as in \eqref{so-sfc-CD}; we have $\bigwedge D=0$ in $Y$, hence in $X$.  For every $d\in D$ there is $z_m$ such that $x_m\le z_m\le d$.  Therefore $X$ satisfies \eqref{so-sfc-CD}.

Now suppose that $X$ satisfies \eqref{so-sfc-CD}, and let $C\subseteq X_+$ be as in the condition.  For every $c\in C$ there exists a countable set $A_c\subseteq Y$ so that $c= \bigwedge A_c$.  Define $A \defeq \bigcup\limits_{c\in C}^{} A_c$.  Clearly, $A\subseteq Y_+$ is countable.  Let $y_n\downarrow 0$ in $Y$.  Since $Y$ is super order dense in $X$ we also have $y_n\downarrow 0$ in $X$.  Therefore there exists $D\subseteq C$ so that $\bigwedge D=0$ and for every $n\in \N$ there exists $d_n\in D$ so that $y_n\le d_n$.  Let $B \defeq \bigcup\limits_{d\in D}^{} A_d$. Then $B\subseteq A$, $\bigwedge B=0$, and $y_n \le d_n\le b$ for every $n\in\N$ and $b\in A_{d_n}\subseteq B$.  Therefore $Y$ satisfies \eqref{so-sfc-CD}.
\end{proof}

\begin{lem}\label{countcard}
Let $X$ be a set endowed with a strongly first countable Hausdorff convergence structure.  Assume that $x_{0}\in X$ is such that any countably-indexed non-terminating net in $Y:=X\backslash\left\{x_{0}\right\}$ consisting of mutually distinct elements converges to $x_{0}$. Then $X$ is countable.
\end{lem}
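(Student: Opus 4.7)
The plan is a proof by contradiction: assume $X$ is uncountable and construct a sequence of mutually distinct elements of $Y$ that cannot converge to $x_0$, violating the standing hypothesis.

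First I would invoke Proposition \ref{prop:  Existence of convergent directed fixed local basis} to extract from strong first countability at $x_0$ a countable fixed intersection-closed local basis $\mathcal H_{x_0}$. Enumerating its elements and successively taking finite intersections, I may assume without loss of generality that $\mathcal H_{x_0}=\{H_n\}_{n\in\N}$ with $x_0\in H_n$ for every $n$ and $H_1\supseteq H_2\supseteq\cdots$.

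Next, I would verify that every $y\in Y$ lies outside some $H_n$. Otherwise $y\in H_n$ for all $n$, so $\mathcal H_{x_0}\subseteq[y]$, where $[y]$ denotes the principal filter at $y$; hence $[\mathcal H_{x_0}]\subseteq[y]$, and since $[\mathcal H_{x_0}]\to x_0$ by Proposition \ref{prop:  Existence of convergent directed fixed local basis}(iii), the finer filter $[y]$ also converges to $x_0$. Combined with $[y]\to y$ (from the constant net) and Hausdorffness, this forces $y=x_0$, a contradiction. Hence $n(y):=\min\{n\ge 1\mid y\notin H_n\}$ is well defined on $Y$, and setting $E_n:=\{y\in Y\mid n(y)=n\}$ yields the partition $Y=\bigsqcup_{n\ge 1}E_n$.

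The heart of the argument is to show that every $E_n$ is finite. Suppose some $E_n$ is infinite and choose an injective sequence $(y_k)_{k\in\N}$ in $E_n$. By the standing hypothesis, $y_k\to x_0$, so the tail filter $\mathcal F:=[y_k]$ converges to $x_0$, and strong first countability produces a filter base $\mathcal B\subseteq\mathcal F\cap\mathcal H_{x_0}$ with $[\mathcal B]\to x_0$. Since $y_k\notin H_n$ for every $k$ and the chain is decreasing, for every $m\ge n$ the set $H_m\subseteq H_n$ contains no $y_k$ and hence no tail of $(y_k)$; therefore $\mathcal B\subseteq\{H_1,\dots,H_{n-1}\}$. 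If $n=1$ this forces $\mathcal B=\varnothing$, contradicting the nonemptiness of a filter base. If $n\ge 2$, then $\mathcal B$ is finite and by the decreasing property $[\mathcal B]=[\{H_M\}]$ for some $M\le n-1$; since $y_1\in H_{n-1}\subseteq H_M$, the principal filter $[y_1]$ refines $[\{H_M\}]$ and thus converges to $x_0$. Combined with $[y_1]\to y_1$ and Hausdorffness, this yields $y_1=x_0$, contradicting $y_1\in Y$.

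Therefore each $E_n$ is finite, so $Y=\bigsqcup_{n\ge 1}E_n$ is countable, and $X=\{x_0\}\cup Y$ is countable. The main technical point is the reduction to a decreasing local basis, which guarantees that every $H_m$ with $m\ge n$ simultaneously misses every term of the chosen sequence; the $n=1$ case must be singled out because there the obstruction is the outright failure of strong first countability to produce a nonempty filter base, rather than a Hausdorff argument of the form used for $n\ge 2$.
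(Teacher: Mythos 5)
There is a genuine gap, and it occurs at the very first step: the reduction ``WLOG $\mathcal H_{x_0}=\{H_n\}_{n\in\N}$ is decreasing.'' Intersection\--closedness guarantees that the initial-segment intersections $H_1\cap\dots\cap H_n$ again belong to $\mathcal H_{x_0}$, but the nested subfamily they form need \emph{not} be a local basis at $x_0$: if $\mathcal F\to x_0$, strong first countability gives a filter base $\mathcal B\subseteq\mathcal F\cap\mathcal H_{x_0}$, and $\mathcal B$ may consist of sets $H_m$ for which the intersections $H_1\cap\dots\cap H_m$ do not lie in $\mathcal F$ (membership of $H_m$ in $\mathcal F$ says nothing about $H_1,\dots,H_{m-1}$). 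In fact a nested local basis is a much stronger property than strong first countability: if $\{H_n\}$ is decreasing, then for every $\mathcal F\to x_0$ the base $\mathcal B$ is either infinite, in which case $\mathcal F$ contains \emph{every} $H_n$, or finite, in which case $\mathcal F\supseteq[H_M]$ with $[H_M]\to x_0$; for a Hausdorff locally solid convergence such as order convergence the latter forces $H_M=\{0\}$, so convergence to $0$ would be governed by a single countably based filter, i.e.\ every order-null net would be dominated by one fixed sequence $u_k\downarrow 0$. This fails, for example, for order convergence on $\ell_\8$, which Proposition \ref{so-clinfty} shows \emph{is} strongly first countable --- and this is exactly the setting in which Lemma \ref{countcard} is applied in Proposition \ref{so-sfc}. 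So the ``WLOG'' is not a harmless normalisation but an unjustified strengthening of the hypothesis.

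The rest of your argument leans essentially on nestedness in two places: the claim that $H_m$ misses every $y_k$ for all $m\ge n$ (you only know $y_k\notin H_n$; without $H_m\subseteq H_n$ a tail of $(y_k)$ could well lie in some other $H_m$, making $\mathcal B$ infinite), and the collapse of a finite $\mathcal B$ to a principal filter $[H_M]$. Hence the gap is not repairable by a local fix; one must argue with an arbitrary intersection-closed basis. This is precisely why the paper's proof takes a different combinatorial route: it sets $Y_n:=Y\setminus H_n$, builds pairwise disjoint countable sets $Z_m$ each meeting every nonempty $Y_n\setminus\bigcup_{k<m}Z_k$, and feeds the single countably-indexed ``diagonal'' net over $\bigcup_m Z_m$ into the hypothesis; the resulting tail filter selects a subfamily $\{H_n\}_{n\in N}$ with $\bigcap_{n\in N}H_n=\{x_0\}$ and forces each $Y_n$, $n\in N$, to be countable, whence $Y$ is countable. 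Your partition of $Y$ into the sets $E_n$ and the termwise finiteness argument do not survive once the basis is no longer a chain.
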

\begin{proof}
For the sake of obtaining a contradiction, suppose that $X$ is not countable.  Let $\mathcal{H}=\left\{H_{n}\right\}_{n\in\N}$ be a fixed intersection-closed local basis at $x_{0}$.  Let $Y_{n}:=Y\backslash H_{n}$, $n\in\N$.  Since $X$ is Hausdorff, $\mathcal{H}$ is fixed, and $[\mathcal{H}]\to x_0$ by Proposition \ref{prop:  Existence of convergent directed fixed local basis}, it follows that $\displaystyle\bigcap \mathcal{H}=\{x_0\}$.  We may therefore suppose that $Y\supseteq Y_n\neq \varnothing$ for every $n\in\N$.  In addition, it follows that $Y=\bigcup\limits_{n\in \N}Y_{n}$.  There exists a countable set $Z_1\subseteq Y$ so that $Z_1\cap Y_n\neq \varnothing$ for every $n\in\N$.  We inductively construct a disjoint sequence $\left(Z_{m}\right)_{m\in\N}$ of countable, nonempty subsets of $Y$ with the following property:  For all $m,n\in\N$, $Z_m$ intersects $Y_{n}\backslash\bigcup\limits_{k=1}^{m-1}Z_{k}$, provided that the latter is not empty.  Assume that $Z_{1},\ldots,Z_{m-1}$ have been constructed.  Since $Z_1,\dots,Z_{m-1}$ are countable and $Y=\bigcup\limits_{n\in \N}Y_{n}$ is uncountable, the existence of $Z_m$ follows from the axiom of countable choice.

Let $\Gamma:=\left\{\left(n,z\right),~ n\in\N,~ z\in Z_{n}\right\}$ be ordered by the first component, and for every $\gamma=(n,z)\in\Gamma$, define $x_{\gamma}:=z$.  Then $\left(x_{\gamma}\right)$ is a countably-indexed non-terminating net in $Y$ consisting of mutually distinct elements.  By assumption, $x_{\gamma}\to x_{0}$, and so the tail filter $\Fo$ of $\left(x_{\gamma}\right)$ converges to $x_{0}$.  It is easy to see that $\Fo=\left[\left\{\bigcup\limits_{k\ge n}Z_{k} \mid n\in\N\right\}\right]$.  Let $N:=\left\{n\in\N,~ H_{n}\in \Fo\right\}$. By our assumption $\left[\{H_{n} \mid n\in N\}\right]=\left[\Fo\cap \mathcal{H}\right]\to x_{0}$.  Since $X$ is Hausdorff and $\mathcal{H}$ is fixed, $\bigcap\limits_{n\in N}H_{n}=\left\{x_{0}\right\}$.

For every $n\in N$ there exists $m\in\N$ such that $\bigcup\limits_{k\ge m}Z_{k}\subseteq H_{n}$, hence $Z_{m}\subseteq H_{n}$, so that $Z_{m}\cap Y_{n}=\varnothing$.  It therefore follows from the construction of $\left(Z_{m}\right)_{m\in\N}$ that $Y_{n}\backslash\bigcup\limits_{k=1}^{m-1}Z_{k}=\varnothing$ so that $Y_{n}$ is countable.  It now follows that $Y=Y\backslash \bigcap\limits_{n\in N}H_{n}=\bigcup\limits_{n\in N}\left(Y\backslash H_{n}\right)=\bigcup\limits_{n\in N}Y_{n}$ is countable, a contradiction.
\end{proof}

\begin{lem}\label{oso}
If $X$ is $\sigma$-order complete, then $\mathrm{o}$ and $\sigma\mathrm{o}$ agree on countably indexed nets.
\end{lem}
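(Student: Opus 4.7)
The plan is to establish the nontrivial direction: if $(x_\alpha)_{\alpha\in A}$ has countable index set and $x_\alpha\goeso x$, then $x_\alpha\goesso x$. The converse is immediate since $\sigma\mathrm{o}$ is always stronger than $\mathrm{o}$. Translating, I would assume $x=0$, and by the definition of order convergence fix a set $D\subseteq X_+$ with $D\downarrow 0$ such that each $d\in D$ eventually dominates $\abs{x_\alpha}$.

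The first step is to extract from the countable directed set $A$ an increasing cofinal sequence $(\beta_n)$, by enumerating $A=\{\gamma_n\}_{n\in\N}$ and inductively picking $\beta_n\in A$ with $\beta_n\ge\beta_{n-1},\gamma_1,\ldots,\gamma_n$. The second step is where the hypothesis of $\sigma$-order completeness enters: fixing any $d_0\in D$ and the corresponding $\alpha_{d_0}\in A$ from the definition of $\goeso$, and choosing $n_0$ with $\beta_{n_0}\ge\alpha_{d_0}$, for every $n\ge n_0$ the tail $T_n\defeq\{\abs{x_\alpha}\mid\alpha\ge\beta_n\}$ is countable (as $A$ is) and order bounded by $d_0$. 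Hence
\[
u_n\defeq\bigvee T_n
\]
exists in $X$ for every $n\ge n_0$; clearly $(u_n)$ is decreasing, and $\abs{x_\alpha}\le u_n$ for all $\alpha\ge\beta_n$.

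The third step is to verify $\bigwedge_{n\ge n_0}u_n=0$. For any lower bound $v$ and any $d\in D$, the definition of $\goeso$ together with cofinality of $(\beta_n)$ produces some $n\ge n_0$ with $\beta_n\ge\alpha_d$, and hence $u_n\le d$; thus $v\le d$ for every $d\in D$, and $\bigwedge D=0$ forces $v\le 0$. After reindexing $(u_n)_{n\ge n_0}$ as an $\N$-indexed sequence, this is precisely a dominating sequence witnessing $x_\alpha\goesso 0$.

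The main obstacle is ensuring existence of the suprema $u_n$; this is the single point at which countability of $A$ and $\sigma$-order completeness of $X$ have to combine with the order boundedness of an eventual tail of $(\abs{x_\alpha})$ supplied by order convergence. Everything else is a direct unpacking of the definitions.
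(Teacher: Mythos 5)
Your proof is correct and takes essentially the same approach as the paper: both exploit the fact that, after passing to an order bounded tail, the countable tails of the net have suprema/infima by $\sigma$-order completeness, and these tail bounds serve as the dominating sequence. The only cosmetic difference is that you extract an increasing cofinal sequence $(\beta_n)$ and use $u_n=\bigvee_{\alpha\ge\beta_n}\abs{x_\alpha}$, whereas the paper keeps the original countable index set and uses the oscillation $v_\gamma-w_\gamma$ of the tail suprema and infima.
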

\begin{proof}
Since $\mathrm{o}\le\sigma\mathrm{o}$ we only need to prove that if $\Gamma$ is countable, and $x_{\gamma}\goeso 0$, then $x_{\gamma}\goesso 0$. WLOG we may assume that $\left(x_{\gamma}\right)$ is order bounded, and so $v_{\gamma}:=\bigvee\limits_{\beta\ge\gamma}x_{\gamma}$ and $w_{\gamma}:=\bigwedge\limits_{\beta\ge\gamma}x_{\gamma}$ exist and satisfy $\bigwedge v_{\gamma}=0=\bigvee w_{\gamma}$. For every $\gamma$ we have $v_{\gamma}\ge x_{\gamma}\ge w_{\gamma}$, hence $\left|x_{\gamma}\right|\le w_{\gamma}-v_{\gamma}$. Then $w_{\gamma}-v_{\gamma}\downarrow 0$ witnesses $x_{\gamma}\goesso 0$.
\end{proof}

\begin{prop}\label{so-sfc}
If $X$ is $\sigma$-order complete and $\sigma$-strongly first countable, then it has the CSP and is order complete.
\end{prop}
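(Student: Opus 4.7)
My plan is to split the claim in two: first establish the countable supremum property (CSP), and then invoke the classical fact that a $\sigma$-Dedekind complete Archimedean vector lattice with CSP is Dedekind complete (\cite[Theorem 23.2]{Luxemburg:71}) to conclude order completeness. All the substance lies in proving CSP.

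For CSP, I will use the characterisation (already cited in the paper from \cite[Theorem 29.3]{Luxemburg:71}) that $X$ has CSP if and only if every order bounded disjoint subset of $X_+$ is countable. Fix $u\in X_+$ and a pairwise disjoint $D\subseteq[0,u]$; the goal is to show $D$ is countable. My plan is to apply Lemma \ref{countcard} to $Y\defeq D\cup\{0\}$ endowed with the restriction of $\sigma\mathrm{o}_X$ as convergence structure. This restriction is Hausdorff (since $\sigma\mathrm{o}_X$ is) and strongly first countable, because $I_u$ is $\sigma$-strongly first countable by Corollary \ref{so-Iu-sfc} and strong first countability is inherited by subspaces. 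It then remains to verify the hypothesis of Lemma \ref{countcard} with $x_0=0$: every countably-indexed non-terminating net $(e_\gamma)_{\gamma\in\Gamma}$ in $D$ with mutually distinct values must $\sigma\mathrm{o}_X$-converge to $0$.

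To check this, I will choose a strictly increasing cofinal sequence $(\gamma_k)$ in $\Gamma$ (possible after passing to the antisymmetric quotient of the preorder, using that $\Gamma$ is countable and non-terminating) and set $T_k\defeq\{e_\gamma\mid\gamma\ge\gamma_k\}$. Each $T_k$ is a countable pairwise disjoint subset of $[0,u]$, so by $\sigma$-order completeness $w_k\defeq\bigvee T_k$ exists, $w_k\downarrow$, and $e_\gamma\le w_k$ whenever $\gamma\ge\gamma_k$. Hence $(w_k)$ is a candidate dominating sequence, and the task reduces to $v\defeq\bigwedge_k w_k=0$. For any $e=e_{\gamma_0}\in T_1$, cofinality gives $k$ with $\gamma_k>\gamma_0$ (strictly), whence $e\notin T_k$; pairwise disjointness of $D$ combined with the countable distributive law valid in $\sigma$-Dedekind complete Archimedean vector lattices then yields $e\wedge w_k=\bigvee_{d\in T_k}(e\wedge d)=0$, so $e\wedge v=0$. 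A second application of the same distributive law gives $v=v\wedge w_1=\bigvee_{d\in T_1}(v\wedge d)=0$. Thus $w_k\downarrow 0$, the net $(e_\gamma)$ is dominated by $(w_k)$, and $e_\gamma\xrightarrow{\sigma\mathrm{o}_X}0$. Lemma \ref{countcard} then delivers $|Y|\le\aleph_0$, and CSP follows.

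The main obstacle I anticipate is the correct setup of the dominating sequence $(w_k)$: one must invoke countable distributivity at two separate points, and must ensure that cofinality of $(\gamma_k)$ really forces each fixed $e\in T_1$ out of $T_k$ for large $k$, which is why the passage to a strictly increasing cofinal sequence (or to the antisymmetric quotient of $\Gamma$) is essential. Once these technicalities are handled, matching the conclusion to the hypotheses of Lemma \ref{countcard} is straightforward, and the order completeness half of the proposition is then immediate from the classical Luxemburg--Zaanen result.
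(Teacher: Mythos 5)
Your proof is correct and follows essentially the same route as the paper: both establish the CSP by applying Lemma \ref{countcard} with $x_0=0$ to an order bounded disjoint set equipped with the (Hausdorff, strongly first countable) restriction of $\sigma\mathrm{o}$, and then deduce order completeness from the classical Luxemburg--Zaanen result (for which the correct reference is Theorem 23.6 of \cite{Luxemburg:71}; 23.2 is the definition of order separability). The only divergence is that where the paper verifies the hypothesis of Lemma \ref{countcard} by citing the well-known order-nullity of order bounded disjoint non-terminating nets together with Lemma \ref{oso}, you prove the $\sigma$-order nullity directly via a dominating sequence of countable tail suprema along a strictly increasing cofinal sequence -- a correct, self-contained unpacking of that step (your appeal to ``countable distributivity'' is in fact the infinite distributive law valid in every Riesz space, and your detour through $I_u$ and Corollary \ref{so-Iu-sfc} is harmless, since $\sigma$-strong first countability of $X$ already passes to the subspace).
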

\begin{proof}
It is well-known that any order bounded disjoint non-terminating net is order null, hence by Lemma \ref{oso} any order bounded disjoint countably indexed non-terminating net is $\sigma$-order null. Let $0\notin A\subseteq X_{+}$ be disjoint and order bounded. Then, $A\cup\left\{0\right\}$ with $x_{0}=0$ satisfy the conditions of Lemma \ref{countcard}, and so $A$ is countable.  Thus $X$ has the CSP which, together with $\sigma$-order completeness, yields order completeness, see \cite[Theorem 23.6]{Luxemburg:71} or \cite[Corollary 2.11]{Deng:25}.
\end{proof}

We can now relate strong first countability to $\sigma$-strong first countability.

\begin{thm}\label{main1}The following are equivalent. \begin{enumerate}[(i)]
\item $X$ is strongly first countable.
\item $X$ is $\sigma$-strongly first countable and has the CSP.
\item $X$ is $\sigma$-strongly first countable and almost $\sigma$-order complete.
\item $X$ is almost $\sigma$-order complete and satisfies \eqref{so-sfc-CD}.
\item $X^{\delta}$ is $\sigma$-strongly first countable.
\item $X^{\delta}$ is strongly first countable.
\end{enumerate}
\end{thm}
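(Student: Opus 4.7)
The plan is to take $(\mathrm{i})\Leftrightarrow(\mathrm{ii})$ from Corollary \ref{main0} and then link (ii) to each of (iii)--(vi). Two preliminary facts drive everything else. First, CSP transfers in both directions between $X$ and $X^\delta$: since $X$ is order dense and majorizing in $X^\delta$, any disjoint order-bounded subset $A \subseteq X^\delta_+$ can be mapped injectively to a disjoint subset $\{x_a\}_{a \in A} \subseteq X_+$ by choosing $0 < x_a \le a$ via order density, and majorizing-ness provides a uniform bound for these $x_a$'s in $X$, so CSP of $X$ forces $A$ to be countable; the converse direction is immediate. Second, CSP of $X$ makes $X$ super order dense in $X^\delta$: given $0 < y \in X^\delta$, order density yields a directed $A \subseteq X_+$ with $A \uparrow y$, and CSP applied to $\{y-a\}_{a \in A} \downarrow 0$ in $X^\delta$ extracts a countable cofinal subset. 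Consequently, whenever $X$ has CSP, Proposition \ref{dense}(ii) is applicable to the pair $(X, X^\delta)$.

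For $(\mathrm{ii})\Leftrightarrow(\mathrm{v})\Leftrightarrow(\mathrm{vi})$: from (ii), the preliminary facts and Proposition \ref{dense}(ii) give that $X^\delta$ is $\sigma$-strongly first countable, which is (v). Since $X^\delta$ is Dedekind complete, Proposition \ref{so-sfc} shows (v) forces CSP of $X^\delta$, and Corollary \ref{main0} applied to $X^\delta$ then upgrades (v) to (vi); the reverse $(\mathrm{vi})\Rightarrow(\mathrm{v})$ is immediate from the same corollary. To close the loop $(\mathrm{v})\Rightarrow(\mathrm{ii})$, Proposition \ref{so-sfc} again delivers CSP of $X^\delta$, which descends to CSP of $X$, and Proposition \ref{dense}(ii) transfers $\sigma$-strong first countability from $X^\delta$ back down to $X$.

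The remaining equivalences $(\mathrm{ii})\Leftrightarrow(\mathrm{iii})\Leftrightarrow(\mathrm{iv})$ follow a similar pattern. $(\mathrm{ii})\Rightarrow(\mathrm{iii})$ is immediate because CSP realises $X$ as a super order dense sublattice of the $\sigma$-order complete $X^\delta$. For $(\mathrm{iii})\Rightarrow(\mathrm{ii})$, let $Y$ be a $\sigma$-order complete lattice containing $X$ super order densely, and replace $Y$ by the ideal $Y'$ of $Y$ generated by $X$: an order-bounded sequence in $Y'$ is dominated by some $x \in X \subseteq Y'$, so its $Y$-supremum lies in $Y'$, giving $\sigma$-order completeness of $Y'$; super order density of $X$ in $Y$ restricts to super order density in $Y'$; and $X$ is majorizing in $Y'$ by construction. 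Proposition \ref{dense}(ii) then transfers $\sigma$-strong first countability from $X$ to $Y'$, Proposition \ref{so-sfc} yields CSP of $Y'$, and this restricts to CSP of $X$. For $(\mathrm{ii})\Leftrightarrow(\mathrm{iv})$, starting from (ii) we have $X^\delta$ $\sigma$-strongly first countable (via (v)) and order complete, so the comment after Theorem \ref{sigmastrong} gives \eqref{so-sfc-CD} for $X^\delta$, and Proposition \ref{odmstar} (with $X$ super order dense and majorizing in $X^\delta$) transfers \eqref{so-sfc-CD} down to $X$; together with (iii) this is (iv). Conversely, \eqref{so-sfc-CD} implies $\sigma$-strong first countability of $X$ by Theorem \ref{sigmastrong}, so $(\mathrm{iv})\Rightarrow(\mathrm{iii})\Rightarrow(\mathrm{ii})$.

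The principal technical obstacle is the ideal-replacement step in $(\mathrm{iii})\Rightarrow(\mathrm{ii})$: one has to verify carefully that the ideal $Y'$ generated by $X$ inside the abstract $\sigma$-order complete witness $Y$ is itself $\sigma$-order complete and still has $X$ as a super order dense majorizing sublattice, so that Propositions \ref{dense} and \ref{so-sfc} can be chained. Everything else is bookkeeping built on the CSP-transfer between $X$ and $X^\delta$ and the Dedekind completeness of $X^\delta$.
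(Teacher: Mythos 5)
Your proposal is correct and follows essentially the same route as the paper: the same key ingredients (Corollary \ref{main0}, Proposition \ref{dense}, Proposition \ref{so-sfc}, Theorem \ref{sigmastrong}, Proposition \ref{odmstar}) and the same ideal-replacement trick for the almost $\sigma$-order complete witness. The only deviations are cosmetic: you reprove the fact that the CSP makes $X$ super order dense in $X^\delta$ (the paper cites \cite[Lemma 1.44]{Aliprantis:03}) and you route $(\mathrm{i})\Leftrightarrow(\mathrm{vi})$ through the $\sigma$-machinery and CSP transfer rather than invoking Proposition \ref{dense}(i) directly.
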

\begin{proof}
(i)$\Leftrightarrow$(ii) was proven in Corollary \ref{main0}, (i)$\Leftrightarrow$(vi) follows from part (i) of Proposition \ref{dense}. (ii)$\Rightarrow$(iii) follows from the fact that the CSP implies almost $\sigma$-order completeness, see \cite[Lemma 1.44]{Aliprantis:03}. (v)$\Rightarrow$(vi) follows from Proposition \ref{so-sfc} and Corollary \ref{main0}. (iv)$\Rightarrow$(iii) follows from the fact that \eqref{so-sfc-CD} implies $\sigma$-strong first countability.\medskip

(iii)$\Rightarrow$(iv),(v): Assume that $X$ embeds as a super order dense sublattice in a $\sigma$-order complete vector lattice $Y$. By replacing $Y$ with $I\left(X\right)$ if needed we may assume that $X$ is majorizing in $Y$. Hence, according to part (ii) of Proposition \ref{dense} it follows that $Y$ is $\sigma$-strongly first countable. Proposition \ref{so-sfc} yields that $Y$ is order complete and so $X^{\delta}=Y$ is $\sigma$-strongly first countable. In particular, $Y$ satisfies  \eqref{so-sfc-CD}, and so according to Proposition \ref{odmstar}, $X$ satisfies \eqref{so-sfc-CD} as well.
\end{proof}

As is shown in Example \ref{bnn}, $\sigma$-strong first countability does not imply strong first countability.  However, it follows immediately from Theorem \ref{main1} that these notions are equivalent for order complete spaces.

\begin{cor}
Let $X$ be almost $\sigma$-order complete.  Then the following are equivalent. \begin{enumerate}[(i)]
    \item $X$ is strongly first countable.
    \item $X$ is $\sigma$-strongly first countable.
    \item $X$ satisfies \eqref{so-sfc-CD}.
\end{enumerate}
\end{cor}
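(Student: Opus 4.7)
The corollary is essentially a bookkeeping consequence of Theorem \ref{main1}, so my plan is to simply unpack the six-way equivalence under the standing hypothesis that $X$ is almost $\sigma$-order complete, and verify that three of the six equivalent conditions in that theorem collapse to the three conditions of the corollary.

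More precisely, I would argue as follows. Theorem \ref{main1} gives us the implications $(\mathrm{i})\Leftrightarrow(\mathrm{iii})\Leftrightarrow(\mathrm{iv})$ among the conditions: $(\mathrm{i})$ $X$ is strongly first countable; $(\mathrm{iii})$ $X$ is $\sigma$-strongly first countable and almost $\sigma$-order complete; $(\mathrm{iv})$ $X$ is almost $\sigma$-order complete and satisfies \eqref{so-sfc-CD}. Under the standing assumption that $X$ is almost $\sigma$-order complete, condition $(\mathrm{iii})$ reduces to ``$X$ is $\sigma$-strongly first countable'' (condition (ii) of the corollary), and condition $(\mathrm{iv})$ reduces to ``$X$ satisfies \eqref{so-sfc-CD}'' (condition (iii) of the corollary), while $(\mathrm{i})$ of the theorem is exactly condition (i) of the corollary. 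The chain $(\mathrm{i})\Leftrightarrow(\mathrm{iii})\Leftrightarrow(\mathrm{iv})$ therefore gives precisely the desired three-way equivalence.

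There is really no obstacle here, since all the substantive work, including the implication \eqref{so-sfc-CD}$\Rightarrow$ $\sigma$-strong first countability (which follows from Theorem \ref{sigmastrong}), and the converse direction via Proposition \ref{odmstar} together with $X^\delta$ being order complete, has already been carried out in the proof of Theorem \ref{main1}. The proof of the corollary is therefore a single sentence citing Theorem \ref{main1} and noting that the hypothesis of almost $\sigma$-order completeness is absorbed into the relevant clauses.
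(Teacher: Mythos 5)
Your proposal is correct and matches the paper's intent exactly: the corollary is stated there as an immediate consequence of Theorem \ref{main1}, and your unpacking of the equivalences (i)$\Leftrightarrow$(iii)$\Leftrightarrow$(iv) of that theorem, with the almost $\sigma$-order completeness hypothesis absorbing the extra clauses, is precisely the intended argument.
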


We illustrate the usefulness of the condition \eqref{so-sfc-CD} by considering some concrete spaces.  Recall that $c$ stands for the sublattice of $\ell_{\8}$ consisting of all convergent real sequences.

\begin{prop}\label{so-clinfty}
$c$ and $\ell_{\8}$ are $\sigma$-strongly first countable, hence also strongly first countable.
\end{prop}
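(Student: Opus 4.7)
My plan is to establish the stronger statement (strong first countability) for $\ell_\8$ by verifying the explicit condition \eqref{so-sfc-CD}, and then to transfer the conclusion to $c$ via the density principle of Proposition \ref{dense}.

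First, I will produce an explicit countable witness set $C\subseteq (\ell_\8)_+$ for \eqref{so-sfc-CD}, taking $C$ to consist of the two-level step sequences $d_{k,m,N}$ that equal $1/k$ on $\{1,\dots,N\}$ and $m$ on $\{N+1,N+2,\dots\}$, for $k,m,N\in\N$. Given any $x_n\downarrow 0$ in $\ell_\8$, I will fix an integer $m\ge \norm{x_1}_\8$ (so $x_n\le m\1$ for all $n$), and use coordinatewise $x_n(i)\downarrow 0$ together with the monotonicity of $(x_n)$ to select, for each $k\in\N$, an index $n_k$ with $x_{n_k}(i)\le 1/k$ for every $i\le k$. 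Then $x_{n_k}\le d_{k,m,k}$, and for each fixed coordinate $i$ the value $d_{k,m,k}(i)$ equals $1/k$ once $k\ge i$, so the pointwise infimum of the sequence $(d_{k,m,k})_{k\in\N}$ is zero. Since $\ell_\8$ is Dedekind complete and its order agrees with the pointwise order on order bounded sets, this pointwise infimum coincides with the order infimum in $\ell_\8$; hence $D=\{d_{k,m,k}\mid k\in\N\}\subseteq C$ verifies \eqref{so-sfc-CD}. Because $\ell_\8$ is $\sigma$-order complete, the implication (iv)$\Rightarrow$(i) of Theorem \ref{main1} then gives strong first countability of $\ell_\8$, and \emph{a fortiori} $\sigma$-strong first countability.

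Second, I will check that $c$ is a majorizing, super order dense sublattice of $\ell_\8$. Majorizing is immediate from $\1\in c$ and $x\le\norm{x}_\8\1$. For super order density, given $x\in(\ell_\8)_+$ the truncations $y_n$, defined by $y_n(i)=x(i)$ for $i\le n$ and $y_n(i)=0$ otherwise, lie in $c_{00}\subseteq c$ and satisfy $y_n\uparrow x$ pointwise, hence in order in $\ell_\8$. Proposition \ref{dense}(i) then transfers strong first countability from $\ell_\8$ to $c$, and Corollary \ref{main0} gives $\sigma$-strong first countability as well (alternatively Proposition \ref{dense}(ii) transfers $\sigma$-strong first countability directly).

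The only real work is the construction of $C$; the rest is invocation of the machinery already in the paper. The subtlety at that step is ensuring $\bigwedge D = 0$ as an order infimum in $\ell_\8$, not merely pointwise, which is exactly what Dedekind completeness of $\ell_\8$ delivers.
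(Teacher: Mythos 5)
Your proof is correct, and its heart is the same as the paper's: a countable family of two-level step sequences (small constant $1/k$ on an initial segment, large constant on the tail) witnessing the countable-domination condition; the paper's witness $v_n$ is exactly the diagonal $d_{n,n,n}$ of your three-parameter family. The packaging differs in two ways. First, the paper verifies the condition of Theorem \ref{sigmastrong} directly in $c$ (remarking that $\ell_\8$ is similar), obtains $\sigma$-strong first countability, and then upgrades to strong first countability via the CSP and Corollary \ref{main0}; you instead verify \eqref{so-sfc-CD} in $\ell_\8$, invoke Theorem \ref{main1} (iv)$\Rightarrow$(i) to get strong first countability there, and only then descend. Second, and this is the genuinely different step, you treat $c$ not directly but by exhibiting it as a majorizing, super order dense sublattice of $\ell_\8$ (via truncations in $c_{00}$) and transferring via Proposition \ref{dense}. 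What your route buys is that all infima are computed pointwise in the Dedekind complete lattice $\ell_\8$, so you never need the small argument (implicit in the paper's direct proof for $c$) that $x_k\downarrow 0$ in $c$ forces coordinatewise convergence; what the paper's route buys is brevity and self-containedness, using a one-parameter witness family and no density transfer. Both arguments are sound.
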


\begin{proof}
We prove the result for $c$; the proof for $\ell_{\8}$ is similar. For each $n\in\mathbb N$, set
  \begin{displaymath}
    v_n=\bigl(\overbrace{\tfrac1n,\dots,\tfrac1n}^{n\mbox{ times}},
      n,n,\dots\bigr).
  \end{displaymath}
Let $C=\{v_n\mid n\in\mathbb N\}$. Suppose that $x_k\downarrow 0$. Take $D=\bigl\{v_n\mid n\ge\norm{x_1}\bigr\}$. Then $\bigwedge D=0$, and for every $d\in D$ there exists $k\in\mathbb N$ such that $x_k\le d$.  By Theorem \ref{sigmastrong}, $c$ is $\sigma$-strongly first countable.  Since $c$ has the CSP, it follows from Corollary \ref{main0} that it is strongly first countable.
\end{proof}

This result may be generalized.  A collection of non-empty open subsets of a topological space is said to be a \term{$\pi$-base} if every non-empty open set contains a member of the collection. We also say that a topological space satisfies the \term{countable chain condition (CCC)} if every disjoint collection of open sets is countable.  It is well known, see for instance \cite{Kandic:18} or \cite[Theorem 2.37]{Deng:25}, that if $K$ is compact, then it satisfies the CCC if and only if $C(K)$ has the CSP.  It is also easy to see that the existence of a countable $\pi$-base implies the CCC.  We present a simple proof that the existence of a countable $\pi$-base implies strong first countability; for the converse see Corollary \ref{corrr}.

\begin{prop}\label{so-cPb}
  Let $K$ be a compact Hausdorff space with a countable $\pi$-base.  Then $C(K)$ is $\sigma$-strongly first countable and strongly first countable.
\end{prop}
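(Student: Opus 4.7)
The plan is to verify condition \eqref{so-sfc-CD} for $C(K)$; this delivers $\sigma$-strong first countability, and combining it with the CSP of $C(K)$ (which we will obtain from the $\pi$-base hypothesis) then yields strong first countability via Corollary \ref{main0}. Let $\{U_n\}_{n\in\N}$ be a countable $\pi$-base of $K$. Since $K$ is compact Hausdorff and hence regular, I will fix for each $n$ a nonempty open $V_n$ with $\overline{V_n}\subseteq U_n$ and, by Urysohn's lemma, a function $\phi_n\in C(K)$ with $0\le\phi_n\le\1$, $\phi_n=\1$ on $\overline{V_n}$ and $\phi_n=0$ off $U_n$. The candidate countable set will be
\[
C:=\bigl\{m\1-(m-q)\phi_n \mid n,m\in\N,\ q\in\Q\cap[0,m]\bigr\}\subseteq C(K)_+.
\]

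To verify \eqref{so-sfc-CD}, I will first record the pointwise reformulation of order convergence in $C(K)$: a decreasing sequence $(x_k)$ in $C(K)_+$ satisfies $\bigwedge_k x_k=0$ if and only if $\inf_k\inf_{y\in V}x_k(y)=0$ for every nonempty open $V\subseteq K$; the nontrivial direction is a standard Urysohn construction producing a nonzero $z\in C(K)_+$ below every $x_k$ when the condition fails. From this, continuity of $x_k$, regularity of $K$, and the $\pi$-base property combine to show that for every $n\in\N$ and every $\varepsilon>0$ there exist $k\in\N$ and $n'\in\N$ with $U_{n'}\subseteq U_n$ and $x_k\le\varepsilon$ on $\overline{U_{n'}}$. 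Now fixing $x_k\downarrow 0$ and setting $m_0:=\lceil\norm{x_1}\rceil$, for each $n\in\N$ and each $\varepsilon\in\Q\cap(0,m_0]$ I will select a pair $(k_{n,\varepsilon},n'_{n,\varepsilon})$ as above and put $d_{n,\varepsilon}:=m_0\1-(m_0-\varepsilon)\phi_{n'_{n,\varepsilon}}\in C$, collecting these into $D\subseteq C$. Domination $x_{k_{n,\varepsilon}}\le d_{n,\varepsilon}$ is immediate: on $\supp(\phi_{n'_{n,\varepsilon}})\subseteq\overline{U_{n'_{n,\varepsilon}}}$ one has $x_{k_{n,\varepsilon}}\le\varepsilon\le d_{n,\varepsilon}$, while off this support $d_{n,\varepsilon}=m_0\ge x_{k_{n,\varepsilon}}$. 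To prove $\bigwedge D=0$ in $C(K)$, I will apply the reformulation again: given a nonempty open $V$, choose $n_0$ with $U_{n_0}\subseteq V$; then $d_{n_0,\varepsilon}\equiv\varepsilon$ on the nonempty set $\overline{V_{n'_{n_0,\varepsilon}}}\subseteq V$, so $\inf_{d\in D}\inf_{y\in V}d(y)\le\varepsilon$ for every $\varepsilon\in\Q\cap(0,m_0]$.

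This establishes $\sigma$-strong first countability. To upgrade it, I will observe that any pairwise disjoint family of nonempty open subsets of $K$ injects into the $\pi$-base (distinct disjoint opens must meet distinct basic sets), so $K$ has the CCC, whence $C(K)$ has the CSP by the result cited just above the proposition; Corollary \ref{main0} then promotes $\sigma$-strong first countability to strong first countability. The main obstacle is the lattice-theoretic identity $\bigwedge D=0$, because it is strictly stronger than pointwise vanishing of the infimum; the parametrization of $D$ by the $\pi$-base is precisely what makes the correct ``empty interior of the positivity set'' criterion hold.
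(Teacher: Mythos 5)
Your proof is correct and follows essentially the same route as the paper's: you verify condition \eqref{so-sfc-CD} with a countable family of Urysohn-type functions indexed by the $\pi$-base (small on a basic set, at least $\norm{x_1}$ off it) and then upgrade to strong first countability via the CSP/CCC and Corollary \ref{main0}. The only differences are cosmetic: you prove the $C(K)$-infimum criterion directly rather than citing \cite[Lemma 3.1]{Bilokopytov:22}, and your functions take the small value on a closed plateau $\overline{V_n}$ with rational levels instead of at a single point $t_n$ with levels $\tfrac1m$.
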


\begin{proof}
It follows from the discussion above that $C(K)$ has the CSP so that ${\rm o}$ and $\sigma{\rm o}$ coincide on $C(K)$.  It therefore suffices to show that condition \eqref{so-sfc-CD} is satisfied. Let $\{U_{n} \}_{n\in\N}$ be a countable $\pi$-base in $K$. For each $n\in \N$, pick a point $t_n\in U_n$. For every pair $m,n\in\mathbb N$, we use Urysohn's Lemma to find $f_{m,n}\in C(K)$ such that $f_{m,n}(t_n)=\frac1m$, $\frac1m\1\le f_{m,n}\le m\1$, and $f_{m,n}(t)=m$ for all $t\notin U_n$.  Let $C\defeq \{f_{m,n}\}_{m,n\in\mathbb N}$. Suppose that $g_k\downarrow 0$ in $C(K)$. Let $D$ be the set of all $f\in C$ such that $f\ge g_k$ for some $k\in \N$.\medskip

Fix a non-empty open set $U$ in $K$ and $m\in\mathbb N$ with $m\ge\norm{g_1}$. Since $g_k\downarrow 0$, \cite[Lemma 3.1]{Bilokopytov:22} asserts that there exist a non-empty open set $V$ and $k\in\mathbb N$ such that $V\subseteq U$ and $g_k(t)\le\frac1m$ for all $t\in V$. Find $n$ such that $U_n\subseteq V$. Then $g_k\le f_{m,n}$ because if $t\in U_n$ then $g_k(t)\le\frac1m\le f_{m,n}(t)$ and if $t\notin U_n$ then $g_k(t)\le\norm{g_1}\le m\le f_{m,n}(t)$. It follows that $f_{m,n}\in D$. Furthermore, since $t_n\in U$ and $f_{m,n}(t_n)\le\frac1m$, we conclude by  \cite[Lemma~3.1]{Bilokopytov:22} that $\bigwedge D=0$.
\end{proof}

Note that $[0,1]$ has a countable $\pi$-base, hence $\sigma$-order convergence on $C[0,1]$ is strongly first countable. More generally, if $K$ is metrizable, it has a countable base, hence a countable $\pi$-base. Furthermore, both $c$ and $\ell_{\8}$ may be represented as $C(K)$ where $K$ has a countable $\pi$-base, so Proposition~\ref{so-clinfty} is a special case of Proposition~\ref{so-cPb}.

We now present an example of a compact Hausdorff space $K$ so that $C(K)$ is $\sigma$-strongly first countable, but not strongly first countable.

\begin{ex}\label{bnn}
Let $K$ be a compact, Hausdorff \term{almost P space}, a.k.a. a \term{P' space}; that is, every closed and nowhere dense $G_{\delta}$ set in $K$ is empty, see \cite{Veksler:73}. As is shown in \cite[Theorem 2]{Veksler:73}, this condition is equivalent to the fact that $\sigma$-order convergence coincides with norm convergence on $C\left(K\right)$.  Taking $C:=\left\{\frac{1}{m}\1,~ m\in\N\right\}$ we see that $C\left(K\right)$ satisfies the condition \eqref{so-sfc-CD}, and so is $\sigma$-strongly first countable.

According to \cite{Veksler:73}, $\beta\N\backslash \N$ is an almost P space, hence $C\left(\beta\N\backslash \N\right)$ satisfies \eqref{so-sfc-CD} and is therefore $\sigma$-strongly first countable.  On the other hand, $C\left(\beta\N\backslash \N\right)$ does not have the CCC, since disjoint clopen sets in $\beta\N\backslash \N$ correspond to almost disjoint families in $\N$, which can be uncountable, see \cite[p. 80]{Koppelberg:89}. Therefore, \eqref{so-sfc-CD} does not imply the CSP, and in particular strong first countability.
\qed\end{ex}

\subsection{The case of Boolean algebras}

In this section we consider order- and $\sigma$-order convergence on Boolean algebras.  Transitioning from vector lattices to Boolean algebras allows us to better see the combinatorial aspects of the properties under consideration, and in particular to ``distill'' the opposite properties.

Throughout this section we denote by $B$ a Boolean algebra.  For basic notation, we follow \cite[Chapter 1]{Koppelberg:89}.  The least and greatest elements of $B$ are denoted $0$ and $1$, respectively.  For $b\in B$, we write $1-b$ for the \term{complement} of $b$ in $B$; that is, $1-b$ is the unique element of $B$ so that $b\vee (1-b)=1$ and $b\wedge (1-b)=0$. More generally, if $a\wedge b=0$ we say that $a$ and $b$ are \term{disjoint}, and write $a\perp b$.

Observe that for every $b>0$ in $B$, the order interval $[0,b]$ is a Boolean algebra with respect to the order it inherits from $B$.  We call $b>0$ an \term{atom} if $[0,b]=\{0,b\}$.  If $a\le b$, we denote by $b-a$ the complement of $a$ in the Boolean algebra $[0,b]$; so $(b-a)\vee a=b$ and $(b-a)\wedge a=0$.  More generally, for $0<b$ and $a\in B$, we denote by $(b-a)^+$ the complement of $a\wedge b$ in $[0,b]$.

The definitions of ($\sigma$-)order convergence in Boolean algebras are analogous to the ones for vector lattices.  As for vector lattices, we call $B$ ($\sigma$-)strongly first countable if ($\sigma$-)order convergence on $B$ is strongly first countable.  We will take advantage of the fact that Theorem \ref{sigmastrong}, Proposition \ref{so-sfc} and Theorem \ref{main1} remain valid for Boolean algebras, as well as the fact that a principal ideal in a ($\sigma$-)strongly first countable Boolean algebra is ($\sigma$-)strongly first countable.

We call $A\subseteq B\backslash\left\{0\right\}$ a \term{$\pi$-base} if for every $b>0$ there is $a\in A\cap\left(0,b\right]$.  Analogously, a $\pi$-base of a vector lattice $X$ is  set $A\subseteq X_+$ so that $A\cap (0,x]\neq \varnothing$ for every nonzero $x\in X_+$.  Let us prove the analogue of Proposition \ref{so-cPb}.

\begin{prop}\label{ccpb}
If $B$ has a countable $\pi$-base, then it is strongly first countable, hence also $\sigma$-strongly first countable.
\end{prop}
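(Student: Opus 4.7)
The plan is to establish the Boolean-algebra analog of condition \eqref{so-sfc-CD} and then invoke the Boolean-algebra versions of Theorems \ref{sigmastrong} and \ref{main1}, both of which were flagged at the start of this subsection as transferring from the vector-lattice setting. Write $A=\{a_n\}_{n\in\N}$ for the given countable $\pi$-base. First I would verify the CSP directly: any disjoint family $\{c_i\}_{i\in I}$ in $B\setminus\{0\}$ injects into $\N$ via $i\mapsto n(i)$, where $a_{n(i)}\in A\cap(0,c_i]$ is supplied by the $\pi$-base property (the chosen $a_{n(i)}$ are pairwise disjoint, hence distinct).

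For the main step I would take $C:=\{1-a : a\in A\}\subseteq B$ as the witness set. Given $b_k\downarrow 0$, define
\[
D:=\{1-a\in C : a\wedge b_k=0 \text{ for some } k\},
\]
so that each $d=1-a\in D$ automatically satisfies $b_k\le d$ for the corresponding $k$. The crucial check is that $\bigwedge D=0$ in $B$. Suppose, for a contradiction, that $0<b$ is a lower bound of $D$. Since $b_k\downarrow 0$ and $b>0$, there is some $k$ with $b\not\le b_k$, i.e.\ $b\wedge(1-b_k)>0$. By the $\pi$-base property, pick $a\in A$ with $0<a\le b\wedge(1-b_k)$. Then $a\wedge b_k=0$, so $1-a\in D$ and $b\le 1-a$, giving $a\wedge b=0$; but $0<a\le b$ forces $a\wedge b=a>0$, the desired contradiction.

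With the Boolean analog of \eqref{so-sfc-CD} in hand, the Boolean version of Theorem \ref{sigmastrong} yields $\sigma$-strong first countability of $B$, and combining with the CSP, the Boolean version of Theorem \ref{main1} promotes this to strong first countability. The only real subtlety is the choice of $C$: taking $C=A$ directly fails because $\pi$-base elements are too \emph{small} to dominate tails of $b_k$, so one must pass to complements, where the equivalence ``$b_k\le 1-a \iff a\perp b_k$'' aligns exactly with what a $\pi$-base is designed to produce.
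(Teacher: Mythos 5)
Your proof is correct and follows essentially the same route as the paper: it notes the CSP, takes $C=\{1-a\mid a\in A\}$ as the witness for the Boolean analogue of \eqref{so-sfc-CD}, and derives the contradiction by choosing a $\pi$-base element below $b\wedge(1-b_k)$, exactly as in the paper's argument. The only differences are cosmetic (you spell out the CSP verification and the appeal to the Boolean versions of Theorem \ref{sigmastrong} and Theorem \ref{main1}, which the paper leaves implicit).
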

\begin{proof}
Since $B$ has a countable $\pi$-base it satisfies the CSP, and so it suffices to show that condition \eqref{so-sfc-CD} holds. Let $\left\{b_{m}\right\}_{m\in\N}\subseteq B\backslash\left\{0\right\}$ be a $\pi$-base for $B$.  We show that $C:=\left\{1-b_{m}\right\}_{m\in\N}$ has the required property. Assume that $x_{n}\downarrow 0$, and let $D=\left\{1-b_{m} \mid 1-b_{m}\ge x_{n} \text{ for some } n\in\N\right\}$. If $\bigwedge D\ne 0$, then there is $b>0$ such that $d\ge b$ for all $d\in D$. Since $\bigvee\limits_{n\in\N}\left(1- x_{n}\right)=1$, there is $n\in\N$ with $\left(1-x_{n}\right)\wedge  b>0$. It follows that there is $m\in\N$ such that $\left(1-x_{n}\right)\wedge  b\ge b_{m}$, in particular, $1-x_{n}\ge b_{m}$, hence $1-b_{m}\in D$ so that $1-b_m\ge b$.  We have $b,1-b\le 1-b_{m}$, hence $b_{m}=0$, a contradiction.
\end{proof}

Let $C=\{c_\alpha\}_{\alpha \in A}$ be a subset of $B\setminus \{0\}$.  According to \cite[Def. 0.1]{Balcar:89}, a \term{disjoint refinement} of $C$ is a collection $D=\{d_\alpha\}_{\alpha \in A}$ of mutually disjoint elements of $B\setminus\{0\}$ so that $d_\alpha \le c_\alpha$ for every $\alpha \in A$.  For a cardinal number $\kappa$, we say that $B$ satisfies the disjoint refinement property for systems of cardinality at most $\kappa$, in short, $B$ has ${\rm Rp}(\kappa)$, if every $C=\{c_\alpha\}_{\alpha \in \kappa}$ has a disjoint refinement, see \cite[Def. 1.0]{Balcar:89}.  We will be interested exclusively in $B$ satisfying ${\rm Rp}(\omega)$.  The relationship between the existence of a countable $\pi$-base and ${\rm Rp}(\omega)$ is given in the following result.

\begin{prop}\label{cdp}
$B$ satisfies ${\rm Rp}(\omega)$ if and only if for every $b>0$ the Boolean algebra $\left[0,b\right]$ does not have a countable $\pi$-base.
\end{prop}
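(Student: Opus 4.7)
The two implications will be handled separately.

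For the ``$\Rightarrow$'' direction, suppose ${\rm Rp}(\omega)$ holds and, toward a contradiction, that $\{c_{n}\}_{n\in\N}$ is a countable $\pi$-base of $[0,b]$ for some $b>0$. The plan is to apply ${\rm Rp}(\omega)$ to the \emph{doubled} family defined by $a_{2n-1}=a_{2n}=c_{n}$, and to derive a contradiction from any disjoint refinement $\{d_{n}\}$. Indeed, from $0<d_{1}\le c_{1}\le b$ the $\pi$-base property produces an index $m$ with $0<c_{m}\le d_{1}$; then $d_{2m-1}\le c_{m}\le d_{1}$, so the disjointness of $d_{1}$ and $d_{2m-1}$ forces $d_{2m-1}=0$ whenever $2m-1\ne 1$, while if $m=1$ one obtains $c_{1}=d_{1}$ and the same observation applied to $d_{2}\le c_{1}=d_{1}$ forces $d_{2}=0$.

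For the ``$\Leftarrow$'' direction, assume that no $[0,b]$ with $b>0$ has a countable $\pi$-base, and let $\{a_{n}\}_{n\in\N}$ be a countable family of nonzero elements of $B$. I plan to construct a disjoint refinement $\{d_{n}\}$ by induction on $n$, maintaining the invariant
\[
(I_{n})\qquad a_{j}\not\le d_{1}\vee\cdots\vee d_{n}\text{ for every } j>n,
\]
with $(I_{0})$ amounting to the nonzeroness of each $a_{j}$. For the inductive step, set $a_{n}^{\ast}=(a_{n}-(d_{1}\vee\cdots\vee d_{n-1}))^{+}$, which is positive by $(I_{n-1})$, and for each $j>n$ set $b_{j}=(a_{j}-(d_{1}\vee\cdots\vee d_{n-1}))^{+}$, likewise positive. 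The key observation is that $\{b_{j}\mid j>n,\, b_{j}\le a_{n}^{\ast}\}$ is a countable subset of $(0,a_{n}^{\ast}]$, and by the standing hypothesis it cannot be a $\pi$-base of $[0,a_{n}^{\ast}]$; so there exists $0<d_{n}\le a_{n}^{\ast}$ with $b_{j}\not\le d_{n}$ for every such $j$. Since $d_{n}\le a_{n}^{\ast}$ this $d_{n}$ is automatically disjoint from $d_{1},\ldots,d_{n-1}$.

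It then remains to verify $(I_{n})$, which splits into two cases for a given $j>n$. If $b_{j}\le a_{n}^{\ast}$, then the non-containment $b_{j}\not\le d_{n}$ yields $e:=(b_{j}-d_{n})^{+}>0$; this $e$ sits below $b_{j}$ (hence is disjoint from $d_{1}\vee\cdots\vee d_{n-1}$) and is disjoint from $d_{n}$, so $e\le a_{j}$ witnesses $a_{j}\not\le d_{1}\vee\cdots\vee d_{n}$. If instead $b_{j}\not\le a_{n}^{\ast}$, then $e:=(b_{j}-a_{n}^{\ast})^{+}>0$ is disjoint from $a_{n}^{\ast}\supseteq d_{n}$ and from $d_{1}\vee\cdots\vee d_{n-1}$, and again witnesses $(I_{n})$. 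The main obstacle is identifying the correct invariant and the correct countable set to which the ``no countable $\pi$-base'' hypothesis is applied at each stage; once the auxiliary elements $b_{j}$ are understood as the relative complements of $a_{j}$ modulo the previously chosen $d_{i}$'s, the inductive step is a direct application of the hypothesis.
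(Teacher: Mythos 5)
Your proof is correct and follows essentially the same strategy as the paper's: in the forward direction one applies ${\rm Rp}(\omega)$ to a countable family manufactured from the putative $\pi$-base and contradicts disjointness, and in the backward direction one builds the refinement inductively, at each stage invoking the failure of countable $\pi$-bases inside the relative complement of the previously chosen elements. Your minor variations (the doubled family in place of the paper's atomlessness-and-shrinking step, and maintaining the invariant only for future indices with a two-case verification instead of the paper's Claim applied in $\left[0,1-\bigvee a_j\right]$) are sound and do not change the substance of the argument.
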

\begin{proof}

Assume that $B$ satisfies ${\rm Rp}(\omega)$.  Applying ${\rm Rp}(\omega)$ to any constant sequence in $B$ we see that $B$ has no atoms.  Suppose that $b>0$ is such that $\left[0,b\right]$ has a countable $\pi$-base $\left\{b_{n}\right\}_{n\in\N}\subseteq B\backslash\left\{0\right\}$. Let $\left\{a_{n}\right\}_{n\in\N}\subseteq B\backslash\left\{0\right\}$ be a disjoint refinement of $\left\{b_{n}\right\}_{n\in\N}$.   In particular, $a_{n}\le b_{n}$ for every $n\in\N$.  Since $a_{n}$ is not an atom, we can ``shrink'' it if needed and assume that $a_{n}<b_{n}$ for every $n\in\N$.  Since $\left\{b_{n}\right\}_{n\in\N}$ is a $\pi$-base for $[0,b]$, for every $n\in\N$ there exists $m_n\in\N$ such that $b_{m_n}\le a_{n}$.  As $a_{n}< b_{n}$, it follows that $m_n\ne n$. Thus, $a_{m_n}\le b_{m_n}\le a_{n}$, which contradicts disjointness of $a_{n}$ and $a_{m_n}$.\medskip

Assume that for every $b>0$, $\left[0,b\right]$ does not have a countable $\pi$-base.  Note that, in particular, no $b>0$ is an atom.  Let $\left\{b_{n}\right\}_{n\in\N}\subseteq B\backslash\left\{0\right\}$.

\textbf{Claim. }There is $0<a\le b_1$ such that $b_m\not\le a_1$ for every $m\in\N$. Let $M\defeq \left\{ m\neq 1 \mid b_m\wedge b_1>0\right\}$.  If $M=\varnothing$, i.e. $b_m\wedge b_1 = 0$ for every $m\neq 1$, take any $0<a<b_1$.  Such an $a$ exists since $b_1$ is not an atom.  Clearly, in this case $0<a\le b_1$ and $b_m \not\le a$ for all $m\in\N$.  Suppose $M\neq \varnothing$.  Since $\{b_m\wedge b_1\}_{m\in M}$ is not a $\pi$-base for $[0,b_1]$ there exists $0<a< b_1$ so that for all $m\in M$, $b_m\wedge b_1 \not\le a$.  If $m\notin M$, i.e. if $b_m\wedge b_1=0$, then since $a\le b_1$ we have $a\wedge b_m =0$ so that $b_m\not \le a$.  If $m\in M$ then $b_m\le a$ implies $b_m\wedge b_1 \le a\wedge b_1=a$, a contradiction, so also in this case $b_m\not\le a$.\medskip

We will now inductively construct a disjoint sequence $\left\{a_{n}\right\}_{n\in\N}\subseteq B\backslash\left\{0\right\}$ such that for all $m,n\in\N$, $a_{n}\le b_{n}$ and $b_{m}\not\le \bigvee\limits_{j=1}^{n}a_{j}$. Existence of $a_1$ follows immediately from the claim. Fix $k\in \N$ and assume that $a_{1},...,a_{k-1}$ have been constructed. Let $B':=\left[0,1- \bigvee\limits_{j=1}^{k-1} a_{j}\right]$, and $b_n'\defeq \left(b_n-\bigvee\limits_{j=1}^{k-1} a_{j}\right)^{+}$, for $n\in \N$. By the inductive hypothesis, we have that $\left\{b'_{n}\right\}_{n\in\N}\subseteq B'\backslash\left\{0\right\}$, and according to the claim, there is $0<a_k\le b'_k$ such that $b'_m\not\le a_k$, for every $m\in\N$. The last condition implies $b_{m}\not\le \bigvee\limits_{j=1}^{k}a_{j}$, for every $m\in\N$.  Since $a_k\le b_k'$ and $b_k'\perp \bigvee\limits_{j=1}^{k-1} a_{j}$ it follows that $a_k \perp a_j$ for all $j=1,\ldots,k-1$. 
\end{proof}

\begin{prop}\label{cdp1}
If $B$ is $\sigma$-order complete and satisfies ${\rm Rp}(\omega)$, then it is not $\sigma$-strongly first countable.
\end{prop}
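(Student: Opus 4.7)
The plan is to argue by contradiction, exploiting the Boolean analogue of condition \eqref{so-sfc-CD} supplied by Theorem \ref{main1}. Suppose $B$ were $\sigma$-strongly first countable. Then there would exist a countable $C=\{c_m\}_{m\in\N}\subseteq B$ such that, for every $x_n\downarrow 0$ in $B$, some $D\subseteq C$ has $\bigwedge D=0$ and every $d\in D$ dominates some $x_n$. After discarding the (useless) element $1$, we may assume $b_m\defeq 1-c_m>0$ for every $m$. The strategy is to exhibit a particular $x_n\downarrow 0$ for which \emph{no} $c_m$ ever dominates any $x_n$; then the only candidate $D$ is $D=\varnothing$, with $\bigwedge D=1\neq 0$, a contradiction.

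To build such a sequence I would apply ${\rm Rp}(\omega)$ to $\{b_m\}_{m\in\N}$, obtaining a disjoint family $\{a_m\}_{m\in\N}$ with $0<a_m\le b_m$. For each $m$ I would then apply ${\rm Rp}(\omega)$ a second time, to the constant sequence $(a_m,a_m,\ldots)$, to produce a disjoint family $\{a_m^{(k)}\}_{k\in\N}\subseteq B\setminus\{0\}$ with $a_m^{(k)}\le a_m$. Using $\sigma$-order completeness, set
\[
e_{m,n}\defeq \bigvee_{k\ge n} a_m^{(k)},\qquad x_n\defeq \bigvee_{m\in\N} e_{m,n}.
\]
Each $e_{m,n}\ge a_m^{(n)}>0$, so $x_n\wedge b_m\ge e_{m,n}>0$ and therefore $x_n\not\le c_m$ for every $m,n$, exactly as required.

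It remains to verify that $x_n\downarrow 0$ in $B$. For fixed $m$, the $\sigma$-distributivity of $\wedge$ over countable joins (a standard consequence of $\sigma$-order completeness), combined with the disjointness of $\{a_m^{(k)}\}_k$, forces $\bigwedge_n e_{m,n}=0$: any element below every $e_{m,n}$ must be disjoint from every $a_m^{(k)}$, hence from their join $e_{m,1}$, which bounds it from above. Moreover, for $k\neq m$ we have $e_{k,n}\wedge a_m=0$ by the disjointness of the $a_m$'s, so $x_n\wedge a_m=e_{m,n}$. Since $x_n\le\bigvee_m a_m$, a second application of $\sigma$-distributivity then yields $\bigwedge_n x_n=0$.

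The main obstacle is organising the construction so that $(x_n)$ decreases to $0$ while still meeting every $b_m$ at every stage $n$ --- two demands that, naively, pull against each other. The disjoint refinement supplied by ${\rm Rp}(\omega)$ is what reconciles them: by spreading the sequence across the disjoint fibres $a_m\le b_m$, each fibre may be drained to $0$ independently through the $e_{m,n}$'s, with no interference between distinct fibres. A secondary point of care is that the two uses of $\sigma$-distributivity (for $\bigwedge_n e_{m,n}$ and for $\bigwedge_n x_n$) both rely only on countable joins, which is exactly what $\sigma$-order completeness provides.
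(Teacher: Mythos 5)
Your proof is correct and follows essentially the same route as the paper: extract a countable witness $C$ from the Boolean analogue of \eqref{so-sfc-CD}, apply ${\rm Rp}(\omega)$ twice to get a doubly indexed disjoint family $a_m^{(k)}\le a_m\le 1-c_m$, and show the tail suprema form a sequence decreasing to $0$ none of whose terms is dominated by any $c_m$ (your $x_n$ is exactly the paper's $y_n$); you merely spell out the ``$\downarrow 0$'' verification that the paper leaves as easy. One cosmetic remark: the infinite distributive law $a\wedge\bigvee_k c_k=\bigvee_k(a\wedge c_k)$ is a property of Boolean algebras valid whenever the join exists, with $\sigma$-completeness only supplying the existence of the joins, not the distributivity itself.
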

\begin{proof}
Assume the contrary.  According to Proposition \ref{so-sfc} there exists $C=\left\{c_{n}\right\}_{n\in\N}\subseteq B\backslash\left\{0,1\right\}$ such that for any $x_{n}\downarrow 0$ there exists $D\subseteq C$ so that $\bigwedge D=0$, and, for every $d\in D$ there exists $k\in\N$ with $x_{k}\le d$.

For every $n\in\N$ let $b_{n}\defeq 1-c_{n}>0$ and let $\left\{a_{n}\right\}_{n\in\N}\subseteq B\backslash\left\{0\right\}$ be disjoint and such that $a_{n}\le b_{n}$, for every $n\in\N$.  For every $n\in\N$ let $\left\{a_{n}^{m}\right\}_{m\in\N}\subseteq B\backslash\left\{0\right\}$ be disjoint and such that $a_{n}^{m}\le a_{n}$, for every $m\in\N$.  For every $m\in\N$ let $x_{m}:=\bigvee\limits_{n\in\N}a_{n}^{m}$.  Then $\left(x_{m}\right)_{m\in\N}$ is a disjoint sequence; defining $y_{m}:=\bigvee\limits_{k\ge m}x_{k}$ we get $y_{m}\downarrow 0$. On the other hand $y_{m}\ge a_{n}^{m}\bot c_{n}$, hence $y_{m}\not\le c_{n}$ for all $m,n\in\N$.  It is easy to see that we cannot find $D\subseteq C$ with the required properties, which leads to a contradiction.
\end{proof}

\begin{ex}\label{measure}
Any atomless measure algebra $\left(B,\mu\right)$ is order complete, has the CSP and satisfies ${\rm Rp}(\omega)$, see \cite[Example 1.5]{Balcar:89}.
\qed\end{ex}

We are now ready to characterize strong first countability for Boolean algebras.

\begin{thm}\label{cso-cPb}
$B$ is strongly first countable if and only if it has a countable $\pi$-base.
\end{thm}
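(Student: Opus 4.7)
The plan is to invoke Proposition \ref{ccpb} for the easy direction. For necessity, suppose $B$ is strongly first countable. My first move is to pass to the Dedekind completion: by the Boolean-algebra analog of Theorem \ref{main1}, $B^{\delta}$ is also strongly first countable, and $B$ has a countable $\pi$-base iff $B^{\delta}$ does (any $\pi$-base of $B$ is a $\pi$-base of $B^{\delta}$ by order density, and conversely each element of a countable $\pi$-base of $B^{\delta}$ dominates a nonzero element of $B$). Thus we may assume $B$ is order complete, and, since first countability implies the CSP for Boolean algebras (analog of Proposition \ref{csp}), also that $B$ has the CSP.

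Assume for contradiction that $B$ has no countable $\pi$-base, and let
\[
S \defeq \{s>0 \mid [0,s] \text{ has a countable } \pi\text{-base}\}.
\]
I argue that $S$ cannot be ``dense from below'' in $B$. Indeed, if every $b>0$ dominated some $s\in S$, Zorn's lemma would produce a maximal disjoint family $\{s_{i}\}_{i\in I}\subseteq S$; the CSP would make $I$ countable, while maximality together with order completeness would force $\bigvee s_{i}=1$ (otherwise $1-\bigvee s_{i}$ would dominate an element of $S$, contradicting maximality), and concatenating countable $\pi$-bases of the ideals $[0,s_{i}]$ would produce a countable $\pi$-base of $B$ -- contradicting the assumption. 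Hence there exists $b>0$ with no $s\in S$ satisfying $s\le b$, so no subinterval of $[0,b]$ has a countable $\pi$-base. By Proposition \ref{cdp}, $[0,b]$ satisfies ${\rm Rp}(\omega)$. Being a principal ideal of the strongly first countable, order complete $B$, the algebra $[0,b]$ is itself strongly (in particular $\sigma$-strongly) first countable by the Boolean-algebra analog of Corollary \ref{so-Iu-sfc}, and is $\sigma$-order complete. This contradicts Proposition \ref{cdp1}, completing the proof.

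The main obstacle I foresee is the dichotomy in the middle step: assembling the countable $\pi$-bases of the pieces $[0,s_{i}]$ into one for $B$ depends on having $\bigvee s_{i}=1$, which requires \emph{both} order completeness (to take the supremum) and the CSP (to keep the index set countable, and thus the union itself countable). Once this is in place, Propositions \ref{cdp} and \ref{cdp1} slot in directly to finish the contradiction on the residual interval $[0,b]$.
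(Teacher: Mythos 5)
Your proof is correct and follows essentially the same route as the paper: reduce via the Boolean analogues of Theorem \ref{main1} and Proposition \ref{so-sfc} to a complete algebra with the CSP, consider the elements $b$ for which $[0,b]$ has a countable $\pi$-base, use Propositions \ref{cdp} and \ref{cdp1} to rule out a ``residual'' element below which no such $b$ exists, and patch countably many $\pi$-bases together using the CSP. The only cosmetic differences are that you run one global contradiction with a Zorn-maximal disjoint family (CSP giving countability) where the paper takes $\bigvee A=1$ directly and extracts a countable subfamily, and you cite the analogue of Proposition \ref{csp} for the CSP where the paper's licensed tool is the analogue of Proposition \ref{so-sfc} — a trivial substitution.
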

\begin{proof}
Sufficiency was proven in Proposition \ref{ccpb}.  Suppose that $B$ is strongly first countable.  According to Theorem \ref{main1} and Proposition \ref{so-sfc} we may assume that $B$ is complete and has the CSP.  Let $A$ be the collection of $b>0$ in $B$ such that $\left[0,b\right]$ has a countable $\pi$-base. Let us show that $\bigvee A=1$. Assume that this is not true.  Then there exists $0<c\in B$ so that $c\bot a$ for every $a\in A$, for instance $c\defeq 1-\bigvee A$.  Clearly, if $0<d\le c$, then $\left[0,d\right]$ does not have a countable $\pi$-base.  Hence, according to Proposition \ref{cdp}, $\left[0,c\right]$ satisfies ${\rm Rp}(\omega)$, and since it is $\sigma$-order complete, by Proposition \ref{cdp1}, it is not $\sigma$-strongly first countable, hence not strongly first countable, a contradiction.\medskip

As $B$ has the CSP, there exists $\left\{a_{n}\right\}_{n\in\N}\subseteq A$ such that $\bigvee\limits_{n\in\N} a_{n}=1$.  For every $n\in \N$, let $C_{n}$ be a countable $\pi$-base for $\left[0,a_{n}\right]$.  We claim that $\bigcup\limits_{n\in\N}C_{n}$ is a (countable) $\pi$-base for $B$.  Indeed, for any $b>0$ there is $n\in\N$ such that $0<b\wedge a_{n}\le a_{n}$, and so there is $c\in C_{n}$ such that $c\le b\wedge a_{n}\le b$.
\end{proof}

\begin{rem}\label{gleason}
Assume that $B$ is complete and has a countable $\pi$-base.  It turns out that these conditions leave little room for what $B$ can be.  In particular, it is easy to see that if $B$ is atomic, then $B$ is isomorphic to $2^{n}$, where $n\in\N\cup\left\{0,\8\right\}$.

If $B$ is atomless, it is uniquely determined.  Indeed, let $A\subseteq B$ be a countable $\pi$-base, and let $B\left(A\right)$ be the Boolean subalgebra generated by $A$ in $B$.  Then, $B\left(A\right)$ is countable and atomless, and hence isomorphic to $Clop\left(K\right)$, the algebra of clopen subsets of Cantor's space $K$, see \cite[Corollary 5.16 and Example 7.24]{Koppelberg:89}.  Note that $B$ is an order complete Boolean algebra which contains $B\left(A\right)$ as an order dense subalgebra. It follows that $B$ is the order completion of $B\left(A\right)^{\delta}$, and so $B$ is isomorphic to  $RO\left(K\right)$, the algebra of regular open sets in $K$, which in turn is isomorphic to $RO\left(\left[0,1\right]\right)$, see \cite[Proposition 7.17]{Koppelberg:89}.
\qed\end{rem}

\subsection{A characterization of strong first countability}\label{Section:  The main characterisation}

We now return to vector lattices.  The following construction provides a bridge from the case of Boolean algebras to the vector lattice case. Let $K$ be the Stone space of a Boolean algebra $B$.  We may assume that $B$ is the Boolean algebra of clopen subsets of $K$.  Let $X_B\subseteq C(K)$ be the sublattice of simple functions over $B$; that is, $X_B$ is the linear span of all indicator functions of clopen subsets of $K$.  Since the clopen sets in $K$ form a basis $K$, it follows form the Stone-Weierstrass theorem that $X_B$ is norm-dense in $C(K)$.  According to Lemma \ref{cor}, $X_B$ is super order dense in $C(K)$, and so by Proposition \ref{dense} the inclusion of $X_B$ into $C(K)$ is an $\mathrm{o}$- and $\sigma\mathrm{o}$-embedding.  Furthermore, $B$ is included into $X_B$ via characteristic functions of clopen sets.

\begin{lem}\label{bavl}
The inclusion of $B$ into $X_B$ introduced above is an embedding with respect to both order- and $\sigma$-order convergence.
\end{lem}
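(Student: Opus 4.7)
The plan is to exploit the identity $\bigl|\mathbf{1}_b-\mathbf{1}_{b'}\bigr|=\mathbf{1}_{b\triangle b'}$, where $\triangle$ denotes symmetric difference, to translate the combinatorial convergence in $B$ into the vector-lattice convergence in $X_B$. I would treat $\mathrm{o}$ and $\sigma\mathrm{o}$ simultaneously, observing that every witness set built in the argument is countable whenever the input is, so the $\sigma\mathrm{o}$ case follows without change. Throughout, since $X_B$ is super order dense (and majorizing) in $C(K)$, Proposition \ref{dense} lets me compute infima in $X_B$ freely.

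The key auxiliary fact I would establish first is: for any $D\subseteq B$,
\[
\bigwedge\nolimits_B D=0\ \Longleftrightarrow\ \bigwedge\nolimits_{X_B}\{\mathbf{1}_d\mid d\in D\}=0.
\]
The ``$\Leftarrow$'' direction is immediate, since $b'\le d$ for all $d\in D$ forces $\mathbf{1}_{b'}\le\mathbf{1}_d$ for all $d$, hence $\mathbf{1}_{b'}=0$, i.e.\ $b'=0$. For ``$\Rightarrow$'', suppose $0\le f\in X_B$ satisfies $f\le \mathbf{1}_d$ for all $d\in D$. Writing $f=\sum c_i\mathbf{1}_{U_i}$ with disjoint clopen $U_i$ and $c_i>0$, pointwise evaluation on each nonempty $U_i$ forces $U_i\subseteq d$ for every $d\in D$, so the clopen set $U_i\in B$ is a positive lower bound for $D$, contradicting $\bigwedge_B D=0$. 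Hence $f=0$.

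For the ``only if'' direction of the main claim, if $b_\alpha\goeso b$ in $B$ with witness $D\subseteq B$ satisfying $\bigwedge_B D=0$, I take $D':=\{\mathbf{1}_d\mid d\in D\}\subseteq X_B$. The auxiliary fact gives $\bigwedge D'=0$ in $X_B$, and the identity above shows that eventually $|\mathbf{1}_{b_\alpha}-\mathbf{1}_b|=\mathbf{1}_{b_\alpha\triangle b}\le\mathbf{1}_d$, yielding $\mathbf{1}_{b_\alpha}\goeso\mathbf{1}_b$ in $X_B$. For the ``if'' direction, suppose $\mathbf{1}_{b_\alpha}\goeso \mathbf{1}_b$ in $X_B$ with witness $D'\subseteq X_B$. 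Since each $d'\in D'$ is a simple function, the level set $e_{d'}:=\{t\in K\mid d'(t)\ge 1\}$ is clopen, hence an element of $B$, and pointwise $\mathbf{1}_{e_{d'}}\le d'$. Set $D:=\{e_{d'}\mid d'\in D'\}\subseteq B$. If $b'\in B$ is below every member of $D$, then $\mathbf{1}_{b'}\le\mathbf{1}_{e_{d'}}\le d'$ for each $d'\in D'$, so $\mathbf{1}_{b'}\le\bigwedge D'=0$, hence $b'=0$; thus $\bigwedge_B D=0$. Finally, $|\mathbf{1}_{b_\alpha}-\mathbf{1}_b|\le d'$ implies $b_\alpha\triangle b\subseteq e_{d'}$ eventually, so $D$ witnesses $b_\alpha\goeso b$ in $B$.

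The main obstacle is the auxiliary infimum statement, since one must rule out the possibility that $X_B$ contains ``non-indicator'' lower bounds to the collection $\{\mathbf{1}_d\}$; the decomposition into disjoint simple pieces is what lets me translate such a bound back into a clopen subset of $\bigcap D$ and arrive at a contradiction with $\bigwedge_B D=0$. Once this is in hand, both convergences reduce to bookkeeping, and the argument is evidently compatible with the countability restriction defining $\sigma\mathrm{o}$.
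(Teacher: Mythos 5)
Your proof is correct and takes essentially the same route as the paper: your level sets $e_{d'}=\{t\in K\mid d'(t)\ge 1\}$ are exactly the paper's $V_n=f_n^{-1}\left[\left[1,+\infty\right)\right]$, and your auxiliary infimum equivalence is a spelled-out version of the step the paper dismisses as ``easy to see'' (there argued via nowhere density of $\bigcap_{n}U_n$). The only cosmetic difference is that you treat $\mathrm{o}$ and $\sigma\mathrm{o}$ simultaneously, whereas the paper writes out only the $\sigma\mathrm{o}$ case.
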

\begin{proof}
As usual, we only provide a proof for $\sigma$-order convergence. If $B\ni U_{n}\downarrow \varnothing$, then $\bigcap\limits_{n\in\N}U_{n}$ is nowhere dense, and it is easy to see that $\bigwedge\limits_{n\in\N}\1_{U_{n}}=\0$ in $X_B$. Arguing as in the proof of Proposition \ref{dense} we see that the embedding is $\sigma$-order continuous. Assume that $\left(U_{\alpha}\right)\subseteq B$ is such that $\1_{U_{\alpha}}\goesso\1_{U}$, where $U\in B$. Let $\left(f_{n}\right)\subseteq X_B$ be a dominating sequence for this convergence. Let $V_{n}:=f^{-1}_n\left[\left[1,+\8\right)\right]\in B$. We have that $V_{n}\downarrow \varnothing$. For every $n\in\N$ there is $\alpha_{n}$ such that $\1_{U_{\alpha}\triangle U}=\left|\1_{U_{\alpha}}-\1_{U}\right|\le f_{n}$, hence $U_{\alpha}\triangle U\subseteq V_{n}$, for every $\alpha\ge\alpha_{n}$. It follows that $U_{\alpha}\goesso U$ in $B$.
\end{proof}

In order to formulate the main result of this section we recall some terminology and notation.  The collection $\Bo_{X}$ of all bands in $X$ is a complete Boolean algebra, see for instance \cite[page 326]{Luxemburg:71}.  A vector  $x\in X_{+}$ is a \emph{component} of $y\in X_{+}$ if $x\wedge \left(y-x\right)=0$.  Note that components of $y$ form a Boolean algebra (this follows from \cite[Theorem~1.49]{Aliprantis:06}), denoted by $C_{y}$.  We are now ready for the main result of the section.

\begin{thm}\label{main2}Let $X$ be an Archimedean vector lattice. The following are equivalent:
\item[(i)] $X$ is strongly first countable;
\item[(ii)] There exist $\left\{a_{n}\right\}_{n\in\N},\left\{b_{n}\right\}_{n\in\N}\subseteq X_+$ such that $X_{+}\backslash\left\{0\right\}=\bigcup\limits_{n\in\N} \left[a_{n},b_{n}\right]$;
\item[(iii)] There exists an increasing sequence $(v_n)$ in $X_+$ such that $X=I_{\{v_n\}}$ and $\Bo_{X}$ has a countable $\pi$-base;
\item[(iv)] There exist $\left\{a_{n}\right\}_{n\in\N}, \left\{b_{n}\right\}_{n\in\N}\subseteq X^{\delta}_+$ such that $X^{\delta}_{+}\backslash\left\{0\right\}=\bigcup\limits_{n\in\N} \left[a_{n},b_{n}\right]$;
\item[(v)] There exists an increasing sequence $(v_n)$ in $X_{+}^{\delta}$ such that $X^\delta=I_{\{v_n\}}$, and $C_{v_{n}}$ (in $X^{\delta}$) has a countable $\pi$-base for every $n\in\N$.
\end{thm}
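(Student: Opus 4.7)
The plan is to run the cycle $(\mathrm{i})\Rightarrow(\mathrm{iii})\Rightarrow(\mathrm{v})\Rightarrow(\mathrm{iv})\Rightarrow(\mathrm{ii})\Rightarrow(\mathrm{iii})\Rightarrow(\mathrm{i})$, anchored by the equivalence $(\mathrm{i})\Leftrightarrow(\mathrm{iii})$. Once that is in place the remaining implications come out of soft transfer between $X$ and $X^{\delta}$ together with a concrete construction via Kakutani's representation theorem.

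For $(\mathrm{i})\Rightarrow(\mathrm{iii})$, I would use Corollary \ref{first-c-lemma} and Proposition \ref{sos1c} to extract an increasing sequence $(v_n)\subseteq X_+$ with $X=I_{\{v_n\}}$ and each $I_{v_n}$ strongly first countable. Theorem \ref{main1} then gives strong first countability of each $(I_{v_n})^{\delta}$, which being Dedekind complete with strong unit $v_n$ is isomorphic via Kakutani to $C(K_n)$ for some Stonean $K_n$; under this identification the components of $v_n$ are precisely the indicator functions of clopens of $K_n$, so $C_{v_n}\cong\mathrm{Clop}(K_n)\cong\Bo_{I_{v_n}}$. An adaptation of Lemma \ref{bavl} yields that the Boolean-algebra order convergence on $C_{v_n}$ agrees with the order convergence inherited from $I_{v_n}$, so $C_{v_n}$ inherits strong first countability and Theorem \ref{cso-cPb} produces a countable $\pi$-base for it. Gluing over $n$ --- sending a basic band of $I_{v_n}$ to the band it generates in $X$ --- yields the required countable $\pi$-base for $\Bo_X$; this is valid because $\bigvee_n B_{v_n}=X$ forces every non-zero $B\in\Bo_X$ to meet some $B_{v_n}$, hence $I_{v_n}$, in a non-zero band.

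Reversing this gives $(\mathrm{iii})\Rightarrow(\mathrm{i})$: restricting the $\pi$-base of $\Bo_X$ to each $I_{v_n}$ yields a countable $\pi$-base for $\Bo_{I_{v_n}}\cong\mathrm{Clop}(K_n)$ which, because clopens form a base for the Stonean space $K_n$, is also a countable $\pi$-base for the topology of $K_n$; Proposition \ref{so-cPb} then yields strong first countability of $C(K_n)=(I_{v_n})^{\delta}$, which Theorem \ref{main1} transfers to $I_{v_n}$, and Proposition \ref{sos1c} closes the argument. The equivalence $(\mathrm{iii})\Leftrightarrow(\mathrm{v})$ amounts to running the same analysis inside $X^{\delta}$: since $X$ is majorizing and order dense in $X^{\delta}$, the conditions ``$X=I_{\{v_n\}}$ for some $(v_n)\subseteq X_+$'' and ``$X^{\delta}=I_{\{v_n\}}$ for some $(v_n)\subseteq X^{\delta}_+$'' interchange, and using $\Bo_X\cong\Bo_{X^{\delta}}$ together with the fact that in the Dedekind complete space $X^{\delta}$ the bands contained in $B_{v_n}$ correspond to the components in $C_{v_n}$, a countable $\pi$-base for $\Bo_{X^{\delta}}$ is the same data as one for each $C_{v_n}$. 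For $(\mathrm{v})\Rightarrow(\mathrm{iv})$ I would pick a countable clopen $\pi$-base $\{W_{k}^{(n)}\}_{k\in\N}$ of $K_n$ and define $a_{k,n,m}\defeq\tfrac{1}{m}\1_{W_{k}^{(n)}}$, $b_{n,m}\defeq m v_n$; for $0<f\in X^{\delta}$ one picks $n$ with $f\in I_{v_n}=C(K_n)$, $k$ with $W_{k}^{(n)}\subseteq\{f>0\}$, and uses compactness of $W_{k}^{(n)}$ to conclude $\inf_{W_{k}^{(n)}}f>0$, so that $a_{k,n,m}\le f\le b_{n,m}$ for $m$ large. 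For $(\mathrm{iv})\Rightarrow(\mathrm{ii})$, majorizing provides $\tilde b_n\in X_+$ with $\tilde b_n\ge b_n$ and order density provides $\tilde a_n\in X_+$ with $0<\tilde a_n\le a_n$; the intervals $[\tilde a_n,\tilde b_n]$ then still cover $X_+\setminus\{0\}$. Finally, $(\mathrm{ii})\Rightarrow(\mathrm{iii})$ is direct: with $v_n\defeq b_1\vee\dots\vee b_n$ one has $X=I_{\{v_n\}}$, and $\{B_{a_n}\}_{n\in\N}$ is a countable $\pi$-base for $\Bo_X$ since any non-zero band $B$ contains some $0<x\in[a_n,b_n]$, forcing $a_n\in B$ and $B_{a_n}\subseteq B$.

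The main obstacle is the $(\mathrm{i})\Rightarrow(\mathrm{iii})$ step, where one must chain strong first countability through Dedekind completion, Kakutani, the identification $C_{v_n}\cong\Bo_{I_{v_n}}$, Theorem \ref{cso-cPb}, and a final gluing step, verifying at each link that the three competing convergence structures (vector-lattice, Boolean-algebra, topological) actually coincide. The subtlest link is the claim that the Boolean-algebra order convergence on $C_{v_n}$ coincides with the one inherited from $I_{v_n}$, which is what the Lemma \ref{bavl}-style argument is needed for and what allows Theorem \ref{cso-cPb} to be applied to a purely vector-lattice hypothesis.
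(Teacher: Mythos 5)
Your proposal is correct, and it runs on the same engine as the paper's proof: Theorem \ref{main1} and Proposition \ref{sos1c} to reduce to principal ideals of the Dedekind completion, the representation of such ideals as $C(K)$ over a Stonean $K$, Lemma \ref{bavl} together with Theorem \ref{cso-cPb} to convert strong first countability into a countable $\pi$-base of the component/band algebra, and Proposition \ref{so-cPb} for the return trip. The differences are organizational rather than conceptual. The paper anchors the cycle at (i)$\Leftrightarrow$(v), working inside the single space $X^{\delta}$ and its principal ideals, which is exactly what lets it avoid any gluing; you anchor at (i)$\Leftrightarrow$(iii), working with the ideals $I_{v_n}$ of $X$ and their individual completions $(I_{v_n})^{\delta}$, at the cost of the extra (correct, and worth writing out) bookkeeping that a $\pi$-base of each $\Bo_{I_{v_n}}$ glues to one of $\Bo_X$ via $D\mapsto D^{dd}$, and conversely that intersecting a $\pi$-base of $\Bo_X$ with $I_{v_n}$ stays nonzero (if $0<h\in H\subseteq B^{dd}$ with $B$ a band of $I_{v_n}$, then $h\wedge b>0$ for some $b\in B_+$ and $h\wedge b\in H\cap I_{v_n}$). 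For (v)$\Rightarrow$(iv) you replace the paper's appeal to Freudenthal's spectral theorem by the compactness argument $\inf_{W}f>0$ for a $\pi$-base clopen $W\subseteq\{f>0\}$; both work and are essentially the same computation. Two small wording slips, neither a gap: the convergence that $C_{v_n}$ inherits is that of $(I_{v_n})^{\delta}\cong C(K_n)$, not of $I_{v_n}$ (components of $v_n$ in the completion need not lie in $I_{v_n}$), and in (iii)$\Leftrightarrow$(v) the passage from $\pi$-bases of the $C_{v_n}$ to one of $\Bo_{X^\delta}$ is not literally ``the same data'' --- it also uses $X^{\delta}=I_{\{v_n\}}$ to see that every nonzero band meets some $B_{v_n}$, which you do have available.
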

\begin{proof}
(i)$\Rightarrow$(v):  Suppose that $X$ is strongly first countable.  According to Theorem \ref{main1}, $X^\delta$ is strongly first countable.  Therefore, by Proposition \ref{sos1c}, there exists an increasing sequence $(v_n)$ in $X^\delta_+$ so that $X^\delta=I_{\{v_n\}}$ and $I_{v_n}$ is strongly first countable for every $n\in \N$.  Fix $n\in \N$.  Since $X^\delta$ is order complete, $I_{v_n}$ is isomorphic to $C(K)$, where $K$ is the Stone space of $C_{v_n}$.  Since $I_{v_n}$ is strongly first countable, $C_{v_n}$ is strongly first countable according to Lemma \ref{bavl} and the preceding discussion, and so has a countable $\pi$-base by Theorem \ref{cso-cPb}.

(v)$\Rightarrow$(i): By Theorem \ref{main1} it is enough to show $X^\delta$ is strongly first countable.  To see that this is so, it suffices by Proposition \ref{sos1c} to show that each $I_{v_n}$ is strongly first countable.  As explained above, $I_{v_n}$ is isomorphic to $C(K)$ with $K$ the Stone space of $C_{v_n}$.  Since elements of $C_{v_n}$ correspond to the clopen subsets of $K$, which form a base for $K$, it follows that $K$ has a countable $\pi$-base, and so $I_{v}$ is strongly first countable by Proposition \ref{so-cPb}.\medskip

(v)$\Rightarrow$(iv):  By Freudenthal's Spectral Theorem (\cite[Theorem 1.58]{Aliprantis:03}), for every $n\in\N$ we have $\left(0,v_{n}\right]=\bigcup\limits_{k\in\N,~ c\in C_{v_{n}}}\left[\frac{1}{k}c, v_{n}\right]$.  For $n\in\N$ let $\left\{v_{nm}\right\}_{m\in\N}$ be a $\pi$-base for $C_{v_{n}}$. It follows that $\left(0,v_{n}\right]=\bigcup\limits_{m,k\in\N}\left[\frac{1}{k}v_{nm}, v_{n}\right]$, from where $X^\delta_{+}\backslash\left\{0\right\}=\bigcup\limits_{m,k,n\in\N}\left[\frac{1}{k}v_{nm}, kv_{n}\right]$.\medskip

(iv)$\Leftrightarrow$(ii) follows from the fact that $X$ is order dense and majorizing in $X^{\delta}$, and Proposition \ref{dense}.\medskip

(ii)$\Rightarrow$(iii): For every $n\in\N$, let $v_{n}:=b_{n}$; clearly, $X=I_{\{v_n\}}$.  We show that $\left\{a_{n}^{dd}\right\}_{n\in\N}$ is a $\pi$-base for $\Bo_{X}$.  Consider $B\in \Bo_{X}\setminus \{\{0\}\}$ and some $0<x\in B$.  There exists $n\in\N$ so that $0<a_n\le x$, hence $a_n\in B$.  Since $a_{n}^{dd}$ is the smallest band containing $a_n$, it follows that $a_n^{dd}\subseteq B$.  \medskip

(iii)$\Rightarrow$(v): Since $X$ is majorizing in $X^{\delta}$ it follows that the ideal generated by $\{v_n\}_{n\in\N}$ in $X^{\delta}$ is all of $X^\delta$.  Let $\left\{H_{m}\right\}_{n\in\N}$ be a countable $\pi$-base of $\Bo_{X}$.  It is easy to see that, for every $n\in\N$,  $\left\{P_{H_{m}}v_{n}\right\}_{m\in\N}$ is a $\pi$-base for $C_{v_{n}}$.
\end{proof}

\begin{cor}\label{corrr}For a compact Hausdorff space $K$, the following are equivalent:
\item[(i)] $K$ has a countable $\pi$-base;
\item[(ii)] The complete Boolean algebra $RO(K)$ of regular open subsets of $K$ has a countable $\pi$-base;
\item[(iii)] $C(K)$ has a countable $\pi$-base;
\item[(iv)] $C(K)$ is strongly first countable.
\end{cor}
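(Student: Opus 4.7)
The plan is to close the cycle of implications (i) $\Rightarrow$ (iv) $\Rightarrow$ (iii) $\Rightarrow$ (i), and separately establish the purely topological equivalence (i) $\Leftrightarrow$ (ii). The implication (i) $\Rightarrow$ (iv) is immediate from Proposition \ref{so-cPb}. The rest of the argument leans on the crucial structural fact that $C(K)$ carries the strong unit $\1$, which makes condition (ii) of Theorem \ref{main2} reduce to a simple $\pi$-base statement.

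For (iv) $\Rightarrow$ (iii), I would invoke Theorem \ref{main2} applied to $X=C(K)$. Since the side condition ``$X=I_{\{v_n\}}$'' is trivially satisfied by $v_n:=n\1$, strong first countability produces (via part (ii) of that theorem) sequences $\{a_n\}, \{b_n\}\subseteq C(K)_+$ with $C(K)_+\setminus\{0\}=\bigcup_{n\in\N}[a_n,b_n]$. The left endpoints $\{a_n\}$ then form a countable $\pi$-base of $C(K)_+$: given any $h>0$ in $C(K)_+$, one picks $n$ with $h\in[a_n,b_n]$ and obtains $0<a_n\le h$.

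For (iii) $\Rightarrow$ (i), given a countable $\pi$-base $\{g_n\}_{n\in\N}$ of $C(K)_+$, I pass to the cozero sets $U_n:=\{t\in K:g_n(t)>0\}$. For any nonempty open $V\subseteq K$, Urysohn's lemma yields a nonzero $h\in C(K)_+$ supported in $V$; the $\pi$-base property then produces some $g_n$ with $0<g_n\le h$, forcing $U_n\subseteq V$. Hence $\{U_n\}_{n\in\N}$ is a countable $\pi$-base of $K$.

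For (i) $\Leftrightarrow$ (ii), this is essentially a standard topological fact. Given a countable $\pi$-base $\{U_n\}_{n\in\N}$ of $K$, the family $\{\mathrm{int}(\overline{U_n})\}_{n\in\N}$ is a $\pi$-base of $RO(K)$, since any nonempty $V\in RO(K)$ contains some $U_n$, whence $\mathrm{int}(\overline{U_n})\subseteq\mathrm{int}(\overline{V})=V$. Conversely, given a countable $\pi$-base $\{V_n\}_{n\in\N}$ of $RO(K)$ and a nonempty open $U\subseteq K$, the regularity of $K$ supplies a nonempty regular open $W$ with $W\subseteq U$, and then some $V_n\subseteq W\subseteq U$. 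The main (and only) nontrivial obstacle is recognising how the covering condition of Theorem \ref{main2}(ii) reduces, in the presence of a strong unit, to the $\pi$-base formulation of (iii); once that reduction is in hand, the remainder is routine Urysohn and regularity manipulation.
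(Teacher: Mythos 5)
Your proof is correct, and it closes the equivalences by a slightly different decomposition than the paper. The paper handles (i)$\Leftrightarrow$(ii) topologically just as you do, but then goes (ii)$\Leftrightarrow$(iv) through condition (iii) of Theorem \ref{main2} (taking $v_n=n\1$) together with the Boolean isomorphism $\Bo_{C(K)}\cong RO(K)$ from Luxemburg--Zaanen, and treats (i)$\Leftrightarrow$(iii) as a routine consequence of condition (ii) of Theorem \ref{main2}. You instead run the cycle (i)$\Rightarrow$(iv)$\Rightarrow$(iii)$\Rightarrow$(i): Proposition \ref{so-cPb} for (i)$\Rightarrow$(iv), the interval-covering condition (ii) of Theorem \ref{main2} for (iv)$\Rightarrow$(iii), and a direct Urysohn/cozero argument for (iii)$\Rightarrow$(i). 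This buys you independence from the band-algebra/$RO(K)$ isomorphism (item (ii) enters only through the purely topological equivalence with (i)), at the cost of invoking Proposition \ref{so-cPb}, which the paper's proof of this corollary does not need; both routes are legitimate and of comparable length. Two cosmetic points: in (iv)$\Rightarrow$(iii) you should note that every $a_n$ whose interval is nonempty is automatically nonzero, since $a_n=0$ would put $0$ into $\bigcup_n[a_n,b_n]=C(K)_+\setminus\{0\}$; this is what makes ``$0<a_n\le h$'' legitimate. Also, your remark about the ``side condition $X=I_{\{v_n\}}$'' is superfluous: Theorem \ref{main2}(i)$\Rightarrow$(ii) carries no such hypothesis (that sequence appears only in conditions (iii) and (v)), though observing that $v_n=n\1$ works is harmless.
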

\begin{proof}
(i)$\Leftrightarrow$(ii) follows from the fact that every open set contains a regular open set. (ii)$\Leftrightarrow$(iv) follows from Theorem \ref{main2} and the fact that there is a Boolean isomorphism between $\Bo_{C(K)}$ and $RO(K)$ (see \cite[Exercise 22.10]{Luxemburg:71}). It is also straightforward to prove (i)$\Leftrightarrow$(iii) using the condition (ii) in Theorem \ref{main2}.
\end{proof}

\begin{rem}
It follows from Remark \ref{rem-strong} that if $X$ admits a complete metrizable locally solid topology, then it is strongly first countable if and only if it is order isomorphic to $C(K)$ where $K$ is a compact Hausdorff space with a coutable $\pi$-base.\qed
\end{rem}

\begin{cor}\label{bvl}
Let $K$ be the Stone space of a Boolean algebra $B$. Then $B$ is strongly first countable if and only if $C(K)$ is.
\end{cor}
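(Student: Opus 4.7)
The plan is to combine two results already proven in the paper, namely Theorem~\ref{cso-cPb} (characterising strong first countability of a Boolean algebra as the existence of a countable $\pi$-base) and Corollary~\ref{corrr} (characterising strong first countability of $C(K)$ as the existence of a countable $\pi$-base in $K$). Applying these two equivalences, the corollary reduces to the purely topological statement that $B$ admits a countable $\pi$-base if and only if $K$ does.

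To establish this last equivalence, I would invoke Stone duality and identify $B$ with the Boolean algebra $\mathrm{Clop}(K)$ of clopen subsets of $K$, which is a base for the topology of $K$. In the forward direction, a countable $\pi$-base $\{b_n\}\subseteq B$ corresponds to a countable family $\{U_n\}$ of nonempty clopens; given any nonempty open $V\subseteq K$, choose a nonempty clopen $W\subseteq V$ (possible since clopens form a base), and use the $\pi$-base property of $\{b_n\}$ applied to the element of $B$ corresponding to $W$ to find $U_n\subseteq W\subseteq V$. Conversely, given a countable $\pi$-base $\{V_n\}$ for $K$, use the fact that clopens are a base to pick, for each $n$, a nonempty clopen $U_n\subseteq V_n$; the $U_n$'s then form a countable $\pi$-base consisting of clopens, which under the identification $B\cong\mathrm{Clop}(K)$ gives a countable $\pi$-base of $B$.

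There is no real obstacle in this argument, as the deep content has already been absorbed into Theorem~\ref{cso-cPb} and Corollary~\ref{corrr}. The only point requiring mild care is the converse direction of the $\pi$-base equivalence, where one must refine the given topological $\pi$-base to a clopen one, which is immediate from the defining property of Stone spaces that clopens form a base.
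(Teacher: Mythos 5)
Your proposal is correct and is essentially the paper's intended derivation: the corollary is stated without proof precisely because it follows immediately by combining Theorem~\ref{cso-cPb} and Corollary~\ref{corrr} with the Stone-duality observation that $B\cong\mathrm{Clop}(K)$ is a base of $K$, so $B$ has a countable $\pi$-base if and only if $K$ does. Your handling of the refinement of a topological $\pi$-base to a clopen one is exactly the right (and only) point needing care.
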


Using part (i) of Proposition \ref{dense} we obtain the following result.

\begin{cor}\label{cbvl}
Let $K$ be compact Hausdorff and let $X\subseteq\Co\left(K\right)$ be a norm-dense sublattice. Then $X$ is strongly first countable if and only if $K$ has a countable $\pi$-base.
\end{cor}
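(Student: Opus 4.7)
The plan is to reduce the statement to the $C(K)$ case already handled by Corollary \ref{corrr} via Proposition \ref{dense}(i). Explicitly, it suffices to prove that every norm-dense sublattice $X$ of $C(K)$ is both majorizing and order dense in $C(K)$; then Proposition \ref{dense}(i) gives that $X$ is strongly first countable iff $C(K)$ is, and Corollary \ref{corrr} (equivalence of (i) and (iv)) gives that $C(K)$ is strongly first countable iff $K$ has a countable $\pi$-base.

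For the majorizing property, I would use norm-density to pick a single $h\in X$ with $\norm{h-\one}<\tfrac12$. Then $h\ge\tfrac12\one$, so $2h\in X$ satisfies $2h\ge\one$; consequently, for every $f\in C(K)$ we have $\abs{f}\le\norm f\one\le 2\norm f\,h$, and $X$ is majorizing.

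For order density, I would adapt the proof of Lemma \ref{cor}. In that lemma unitality was used to form $(v_n-\tfrac{1}{2n}\one)^+$; here, given $g\in C(K)_+$, I would instead pick $v_n,w_n\in X$ with $\norm{v_n-g}<\tfrac{1}{3n}$ and $\norm{w_n-\tfrac1n\one}<\tfrac{1}{3n}$, and set $u_n\defeq (v_n-w_n)^+\in X_+$. From $w_n\ge\tfrac{2}{3n}\one$ one gets $v_n-w_n\le g+\tfrac{1}{3n}\one-\tfrac{2}{3n}\one\le g$, hence $u_n\le g$; and from $w_n\le\tfrac{4}{3n}\one$ together with $v_n\ge g-\tfrac{1}{3n}\one$ one obtains $u_n\ge g-\tfrac{5}{3n}\one$. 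Thus $u_n\to g$ uniformly with $u_n\le g$, so any upper bound of $\{u_n\}$ dominates $g$ pointwise, yielding $\bigvee_{n\in\N} u_n=g$ in $C(K)$. This shows $X$ is in fact super order dense, and in particular order dense.

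The only real obstacle is the second step: in the unital setting Lemma \ref{cor} was essentially immediate, and the task here is to ensure that replacing the scalar multiple of $\one$ by an approximation $w_n\in X$ still gives a controlled lattice expression in $X$ that converges uniformly to $g$ from below. Once this is done, Proposition \ref{dense}(i) and Corollary \ref{corrr} combine to give both implications of the corollary simultaneously.
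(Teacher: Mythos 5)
Your proof is correct and follows the paper's intended route: the paper obtains this corollary directly from Proposition \ref{dense}(i) (combined with Corollary \ref{corrr}), leaving implicit the fact that a norm-dense sublattice of $C(K)$ is majorizing and order dense. Your explicit verification of that fact, adapting Lemma \ref{cor} to the non-unital setting via the elements $(v_n-w_n)^+$, is sound, so this is essentially the same argument with the omitted details filled in.
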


\begin{rem}
It is also true that in the context of Corollary \ref{bvl}, $B$ is $\sigma$-strongly first countable if and only if $C(K)$ is. The converse implication is a consequence of Lemma \ref{bavl}, but the proof of the direct implication is rather long and technical. The main idea is to follow the scheme of the proof of Proposition \ref{dense}. Namely, start with countable fixed intersection closed local basis $\mathcal{H}$ and show that $\Ho':=\left\{H^{m} \mid H\in\Ho,~ m\in\N\right\}$ witnesses strong first countability of $\sigma$-order convergence on $X$, where $H^{m}:=\left\{f\in X\cap\left[-\1,\1\right],~ \supp\left(\left|f\right|-\frac{1}{m}\1\right)^{+}\in H\right\}$.\medskip

Note that the proof of the statement, as well as of Proposition \ref{fsc-unbdd}, could be greatly simplified if we had a developed theory of ``local'' convergence, i.e. convergence to a fixed point of the space, as opposed to convergence to every point. For example, in the proof of Proposition \ref{bavl} we essentially observe that convergence to $\0$ in $X$ is initial with respect to maps $f\mapsto \supp\left(\left|f\right|-\frac{1}{m}\1\right)^{+}$ into $B$, which act like ``pseudo-norms''. Hence, first countability of $B$ implies first countability of $X$ at $\0$, thus everywhere.
\qed\end{rem}

\begin{cor}\label{corm}
If $\mu$ is a finite measure, then $L_{\8}\left(\mu\right)$ is $\sigma$-strongly first countable if and only if $L_{\8}\left(\mu\right)$ is isomorphic to $\ell_{\8}$, or finite-dimensional.
\end{cor}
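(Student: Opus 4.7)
The plan is to reduce the question to a structural statement about the measure algebra $B$ of $\mu$. Since $L_\8(\mu)$ is Dedekind complete, Theorem~\ref{main1} shows that $\sigma$-strong first countability and strong first countability coincide for $L_\8(\mu)$. Representing $L_\8(\mu)$ as $C(K)$ with $K$ the Stone space of $B$, Corollary~\ref{corrr} (equivalently Theorem~\ref{main2}) then identifies $\sigma$-strong first countability of $L_\8(\mu)$ with the existence of a countable $\pi$-base for $B$. So the real task is to show that $B$ admits a countable $\pi$-base if and only if $B\cong 2^n$ for some $n\in\N$ or $B\cong P(\N)$.

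For necessity, I would decompose $B$ into its atomic and atomless parts. If the atomless part is nontrivial, choose a nonzero element $e$ in it; the principal ideal $[0,e]\subseteq B$ is then an order complete atomless measure algebra, hence satisfies $\mathrm{Rp}(\omega)$ by Example~\ref{measure}. By Proposition~\ref{cdp}, $[0,e]$ has no countable $\pi$-base. But any countable $\pi$-base $\{b_n\}$ for $B$ would restrict via $\{b_n\wedge e\mid b_n\wedge e\neq 0\}$ to a countable $\pi$-base for $[0,e]$: indeed, given $0<c\le e$, pick $n$ with $b_n\le c$; then $b_n\le e$, so $b_n\wedge e=b_n\neq 0$ and $b_n\wedge e\le c$. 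This is a contradiction, so $B$ must be purely atomic. Since $\mu$ is finite, pairwise disjoint atoms have disjoint representatives of positive measure summing to at most $\mu(\Omega)<\infty$, so there are at most countably many; as $B$ is also Dedekind complete, it follows that $B\cong 2^n$ for some $n\in\N$ or $B\cong P(\N)$. Correspondingly, $L_\8(\mu)$ is isomorphic as a vector lattice to $\R^n$ or to $\ell_\8$.

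For sufficiency, $\ell_\8$ is $\sigma$-strongly first countable by Proposition~\ref{so-clinfty}, while any finite-dimensional Archimedean vector lattice trivially satisfies condition (ii) of Theorem~\ref{main2} and is thus strongly first countable (one uses the finite set of atoms together with rational multiples of a strong unit).

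The main point to be careful about lies in the necessity direction: the local obstruction furnished by any atomless part must be transferred to the global $\pi$-base of $B$. This is exactly what the combination of Propositions~\ref{cdp} and~\ref{cdp1} provides, via the simple observation that restricting a $\pi$-base of $B$ by intersection with $e$ yields a $\pi$-base of $[0,e]$. Once this localization step is in place, the classification of purely atomic Dedekind complete Boolean algebras of finite-measure type (forcibly $2^n$ or $P(\N)$) completes the argument.
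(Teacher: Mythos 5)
Your proof is correct and follows essentially the same route as the paper: reduce $\sigma$-strong to strong first countability, translate via Corollary~\ref{corrr}/Theorem~\ref{main2} into a countable $\pi$-base condition on the measure algebra, and kill any nontrivial atomless part using Example~\ref{measure} together with Proposition~\ref{cdp}. The only differences are cosmetic: the paper localizes by passing to the atomless principal ideal of $L_{\8}(\mu)$ and uses the CSP with Corollary~\ref{main0} where you use Dedekind completeness with Theorem~\ref{main1} and restrict the $\pi$-base to $[0,e]$ inside the Boolean algebra, and you spell out the purely atomic classification and the finite-dimensional sufficiency that the paper leaves implicit.
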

\begin{proof}
Sufficiency was proven in Proposition \ref{so-clinfty}. Since $L_{\8}\left(\mu\right)$ has the CSP, in order to show necessity we may assume that $L_{\8}\left(\mu\right)$ is strongly first countable.  The atomless part of $L_{\8}\left(\mu\right)$ is a principal ideal, and so to reach a contradiction we may assume that $\mu$ is atomless to begin with.  Hence, $L_{\8}\left(\mu\right)$ is isomorphic to $C(K)$, where $K$ is the Stone space of the measure algebra $B$ of $\mu$.  Note that $RO(K)\simeq B$, and by Example \ref{measure} this algebra does not have a countable $\pi$-base which, according to Corollary \ref{corrr}, implies that $L_{\8}\left(\mu\right)$ is not strongly first countable.
\end{proof}

\subsection{The case of uo convergence}

In this subsection we consider (strong) first countability of the unbounded modifications $\mathrm{uo}$ and $\mathrm{u\sigma o}$ of $\mathrm{o}$ and $\mathrm{\sigma o}$, respectively.

\begin{prop}\label{ucsp}Let $X$ be an Archimedean vector lattice. The following are equivalent:
\item[(i)] $X$ has countable supremum property;
\item[(ii)] $\mathrm{uo}=\mathrm{u\sigma o}$ on $X$;
\item[(iii)] $\mathrm{uo}$ is Frechet-Urysohn on $X$;
\item[(iv)] $\mathrm{uo}$ is sequential on $X$.
\end{prop}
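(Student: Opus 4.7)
The plan is to establish the equivalence via the cycle $(\mathrm{i})\Leftrightarrow(\mathrm{ii})$ together with $(\mathrm{i})\Rightarrow(\mathrm{iii})\Rightarrow(\mathrm{iv})\Rightarrow(\mathrm{i})$, in close parallel with Proposition~\ref{csp}. The equivalence $(\mathrm{i})\Leftrightarrow(\mathrm{ii})$ is direct. For $(\mathrm{i})\Rightarrow(\mathrm{ii})$, Proposition~\ref{csp} supplies $\mathrm{o}=\sigma\mathrm{o}$, and since $\mathrm{uo}$ and $\mathrm{u}\sigma\mathrm{o}$ are defined by the same template ``$|x-x_\alpha|\wedge y\to 0$ for every $y\in X_+$'' against their respective base convergences, one concludes $\mathrm{uo}=\mathrm{u}\sigma\mathrm{o}$. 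Conversely, for an order bounded net $(x_\alpha)$ with $|x_\alpha|\le u$, choosing $y\ge u$ in the defining condition makes $\mathrm{uo}$ reduce to $\mathrm{o}$ and $\mathrm{u}\sigma\mathrm{o}$ reduce to $\sigma\mathrm{o}$; thus $(\mathrm{ii})$ forces $\mathrm{o}=\sigma\mathrm{o}$ on every order interval, which by Remark~\ref{bcsp} is CSP.

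The implication $(\mathrm{iii})\Rightarrow(\mathrm{iv})$ is automatic. For $(\mathrm{iv})\Rightarrow(\mathrm{i})$, note that $\mathrm{uo}$ is Hausdorff on every Archimedean vector lattice: a constant net $x_\alpha\equiv x$ that $\mathrm{uo}$-converges to $y$ forces $|x-y|\wedge z=0$ for all $z\in X_+$, and taking $z=|x-y|$ gives $x=y$. Since $\mathrm{o}$ is both locally solid and locally order bounded, Proposition~\ref{Porp: Ueta 1st countable implies eta 1st countable}(\ref{Sequential}) then ensures that $\mathrm{uo}$ sequential implies $\mathrm{o}$ sequential, and Proposition~\ref{csp} delivers CSP.

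The substantive direction is $(\mathrm{i})\Rightarrow(\mathrm{iii})$. Assuming CSP (hence $\mathrm{o}=\sigma\mathrm{o}$ and $\mathrm{uo}=\mathrm{u}\sigma\mathrm{o}$), given $x_\alpha\xrightarrow{\mathrm{uo}}0$ I would construct the required sequence by a recursive diagonal procedure. Pick $\alpha_1$ arbitrarily and, once $\alpha_1,\ldots,\alpha_{n-1}$ are chosen, set $z_n:=\bigvee_{k<n}|x_{\alpha_k}|$. For each $k\le n$, first countability of $\sigma\mathrm{o}$ furnishes a dominating sequence $(v_{k,m})_{m}\downarrow 0$ with $|x_\alpha|\wedge z_k\le v_{k,m}$ eventually in~$\alpha$; choose $\alpha_n$ realising the finitely many inequalities $|x_{\alpha_n}|\wedge z_k\le v_{k,n}$ for $k\le n$. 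Fixing $k$ and letting $n\to\infty$ yields $|x_{\alpha_n}|\wedge z_k\xrightarrow{\mathrm{o}}0$, which together with the bound $|x_{\alpha_n}|\wedge y\le c(|x_{\alpha_n}|\wedge z_k)$ handles every $y$ in the ideal generated by $(z_k)$.

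The main obstacle is extending this to arbitrary $y\in X_+$. The plan is to work in the Dedekind completion $X^\delta$: let $B$ be the band generated by $\{z_k\}$ and decompose $y=y^B+y^{B^d}$. Since each $|x_{\alpha_n}|\in I_{\{z_k\}}\subseteq B$, one has $|x_{\alpha_n}|\wedge y^{B^d}=0$, so it remains to control $|x_{\alpha_n}|\wedge y^B$. Because $y^B$ typically lies in the band $B$ but not in the ideal $I_{\{z_k\}}$, this reduction is not automatic; the intended argument invokes CSP (every order bounded disjoint subset being countable) together with Freudenthal's spectral theorem inside $X^\delta$ to approximate $y^B$ from within $I_{\{z_k\}}$ by countable suprema of its components, thereby reducing the uncountable family of tests to the countable data of the $(z_k)$. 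This band-decomposition step, closing the convergence $|x_{\alpha_n}|\wedge y\to 0$ for $y$ outside $I_{\{z_k\}}$, is the heart of the proof and its most delicate technical point.
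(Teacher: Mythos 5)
Your treatment of the easy implications is sound and essentially the paper's: (i)$\Rightarrow$(ii) via Proposition \ref{csp}, (iii)$\Rightarrow$(iv) trivially, and (ii)$\Rightarrow$(i) from the fact that $\mathrm{uo}$, $\mathrm{u\sigma o}$ agree with $\mathrm{o}$, $\sigma\mathrm{o}$ on order bounded nets together with Remark \ref{bcsp}. Your (iv)$\Rightarrow$(i), via Hausdorffness of $\mathrm{uo}$ and Proposition \ref{Porp: Ueta 1st countable implies eta 1st countable}(iv) followed by Proposition \ref{csp}, is a legitimate alternative to the paper's route (which again goes through Remark \ref{bcsp} and agreement on order intervals); both work. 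The real point of comparison is (i)$\Rightarrow$(iii): the paper does not prove this implication internally at all, but cites \cite[Theorem 3.15]{Deng:25}. You attempt a direct construction, which is more ambitious than the paper, but the attempt is not complete, and you say so yourself.

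Concretely, your diagonal procedure does produce a sequence $w_n\defeq x_{\alpha_n}$ from the range with $|w_n|\wedge z_k\goeso 0$ for every $k$ and $|w_n|\le z_{n+1}$, and this settles all test vectors $y\in I_{\{z_k\}}$. But the passage to arbitrary $y\in X_+$ is precisely the substance of the implication, and the plan you sketch (CSP plus Freudenthal's theorem to approximate the component $y^B$ from within $I_{\{z_k\}}$) does not by itself deliver it: writing $y^B=\sup\{w\in I_{\{z_k\}}\mid 0\le w\le y^B\}$ only gives $|w_n|\wedge w\goeso 0$ for each \emph{fixed} $w$, whereas you need the limit for $y^B$ itself, and that interchange of limits is exactly what has to be proved. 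As written, the heart of (i)$\Rightarrow$(iii) is therefore a plan rather than a proof. The gap can be closed with a tool the paper already uses elsewhere (proof of Proposition \ref{ufc}): by \cite[Lemma 2.2]{Li:18}, $\mathrm{uo}$-convergence may be tested against any positive set generating the whole space as a band. Applying this inside the band $B$ generated by $\{z_k\}$ in $X^{\delta}$, which contains every $w_n$ and in which $\{z_k\}^{dd}=B$, the relations $|w_n|\wedge z_k\goeso 0$ give $|w_n|\wedge u\goeso 0$ for all $u\in B_+$; then for $y\in X_+$ one has $|w_n|\wedge y=|w_n|\wedge P_{B}y\goeso 0$ in $X^{\delta}$, and the convergence transfers back to $X$ by Proposition \ref{dense}. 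Without this (or an equivalent) lemma, your proof of (i)$\Rightarrow$(iii) has a genuine gap.
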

\begin{proof}
(i)$\Rightarrow$(ii) follows from Proposition \ref{csp}, and (iii)$\Rightarrow$(iv) is trivial. (ii)$\Rightarrow$(i) and (iv)$\Rightarrow$(i) follow from Remark \ref{bcsp} and the fact that $\mathrm{uo}$ and $\mathrm{u\sigma o}$ agree with $\mathrm{o}$ and $\sigma\mathrm{o}$, respectively, on order intervals. (i)$\Rightarrow$(iii) is proven in \cite[Theorem 3.15]{Deng:25}.
\end{proof}

Recall that $X^u$ denotes the universal completion of $X$, see for instance \cite[Section 7.2]{Aliprantis:03}.

\begin{prop}[cf. \cite{Taylor:18}, Theorem 6.2]\label{ufc}Let $X$ be an Archimedean vector lattice. The following are equivalent:
\item[(i)] $X$ has the countable supremum property and there is a countable set $C\subseteq X$ such that $X=C^{dd}$;
\item[(ii)] Disjoint sets in $X$ are at most countable;
\item[(iii)] $\Bo_{X}$ has the countable supremum property;
\item[(iv)] $X^{u}$ has countable supremum property;
\item[(v)] $\mathrm{uo}$ is first countable on $X$;
\item[(vi)] $\mathrm{uo}$ is weakly first countable on $X$;
\item[(vii)] $\mathrm{uo}$ is first countable on $X^{u}$;
\item[(viii)] $\mathrm{uo}$ is sequential on $X^{u}$.
\end{prop}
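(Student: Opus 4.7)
The plan is to prove these equivalences by arranging them into three groups: the combinatorial conditions (i)--(iv), the convergence conditions (v)--(vi) on $X$, and the convergence conditions (vii)--(viii) on $X^{u}$, linked by Propositions \ref{fsc-unbdd} and \ref{ucsp}. For (ii)$\Rightarrow$(i), order bounded disjoint sets are disjoint sets hence countable, and every maximal disjoint family is countable and generates $X$ as a band; (i)$\Rightarrow$(ii) follows by writing any disjoint family $D$ as $\bigcup_{n}\{d\in D:d\wedge|c_{n}|>0\}$ and observing that each term is in bijection with the order bounded disjoint set $\{d\wedge|c_{n}|:d\in D\}\setminus\{0\}\subseteq[0,|c_{n}|]$, which is countable by CSP. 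The equivalence (ii)$\Leftrightarrow$(iii) follows from the correspondence $d\mapsto d^{dd}$ between disjoint nonzero elements and disjoint nonzero principal bands, selecting a nonzero element from each nonzero band in the reverse direction. For (iii)$\Leftrightarrow$(iv), I would use the canonical isomorphism $\Bo_{X}\cong\Bo_{X^{u}}$ given by extension of bands to $X^{u}$: in the universally complete $X^{u}$, lateral completeness lets one paste an arbitrary disjoint family of bands into an order bounded disjoint family of components of a weak unit, so CSP of $X^{u}$ as a lattice coincides with CSP of $\Bo_{X^{u}}$.

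For the convergence block on $X$, the implication (i)$\Rightarrow$(v) uses Proposition \ref{fsc-unbdd}: by Proposition \ref{csp}, CSP yields $\mathrm{o}=\sigma\mathrm{o}$ first countable on $X$; setting $v_{n}:=|c_{1}|+\cdots+|c_{n}|$ with $C=\{c_{n}\}$ produces an increasing sequence in $X_{+}$ generating an order dense ideal $I=I_{\{v_{n}\}}$, and since $\mathrm{o}$ is idempotent with $\overline{I}^{\mathrm{o}}=X$ the remark following Proposition \ref{fsc-unbdd} gives $\mathrm{u}_{I}\mathrm{o}=\mathrm{uo}$, so Proposition \ref{fsc-unbdd}(i) transfers first countability from $\mathrm{o}$ to $\mathrm{uo}$. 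The implication (v)$\Rightarrow$(vi) is trivial. For (vi)$\Rightarrow$(ii), weak first countability implies Fr\'echet-Urysohn, hence sequential, hence CSP by Proposition \ref{ucsp}; to bound the width, assume for contradiction an $\omega_{1}$-indexed disjoint family $\{e_{\alpha}\}_{\alpha<\omega_{1}}\subseteq X_{+}\setminus\{0\}$, so that by CSP each set $\{\alpha:y\wedge e_{\alpha}\neq 0\}$ is countable hence bounded in $\omega_{1}$, making the transfinite net $(e_{\alpha})_{\alpha<\omega_{1}}$ $\mathrm{uo}$-null. Any increasing witness $(\alpha_{n}')$ to weak first countability is a countable sequence in $\omega_{1}$, hence bounded above by some $\beta<\omega_{1}$ with $\beta\ge\alpha_{n}'$ for all $n$; taking $\alpha_{n}:=\beta$ for all $n$ produces the constant sequence $(e_{\alpha_{n}})=(e_{\beta})_{n\in\N}$, whose $\mathrm{uo}$-limit is $e_{\beta}\neq 0$, contradicting the conclusion of weak first countability.

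The remaining equivalences (iv)$\Leftrightarrow$(vii)$\Leftrightarrow$(viii) are obtained by applying the cycle (i)$\Rightarrow$(v)$\Rightarrow$(vi)$\Rightarrow$(ii) to $X^{u}$: condition (iv) is precisely condition (i) for $X^{u}$, since in the universally complete $X^{u}$ the CSP of $X^{u}$ already forces a countable maximal disjoint family by the pasting argument above, so (i)$\Rightarrow$(v) applied to $X^{u}$ yields (vii); then (vii)$\Rightarrow$(viii) is immediate via Fr\'echet-Urysohn, and (viii)$\Rightarrow$(iv) is Proposition \ref{ucsp} applied to $X^{u}$. The main obstacle is the implication (vi)$\Rightarrow$(ii): obtaining CSP via Proposition \ref{ucsp} is routine, but the width bound requires the transfinite construction above together with a careful use of the clause in weak first countability that permits $(\alpha_{n})$ to be constant above $\sup_{n}\alpha_{n}'$.
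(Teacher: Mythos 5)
Your argument is correct, but it takes a genuinely different and much more self-contained route than the paper. The paper's own proof is largely by citation: the equivalence of (i)--(iv) and (vi) is delegated to \cite{Deng:25}, (iv)$\Leftrightarrow$(viii) to Proposition \ref{ucsp}, the implication (i)$\Rightarrow$(v) to Propositions \ref{csp} and \ref{fsc-unbdd} combined with \cite[Lemma 2.2]{Li:18} (which says uo-convergence can be tested against any set $C$ with $C^{dd}=X$, so that ${\rm u}_{I_C}{\rm o}={\rm uo}$), and (iv)$\Leftrightarrow$(vii) by applying (iv)$\Leftrightarrow$(v) to $X^u$. You instead prove the combinatorial block (i)$\Leftrightarrow$(ii)$\Leftrightarrow$(iii)$\Leftrightarrow$(iv) directly --- the counting via $d\mapsto d\wedge\abs{c_n}$, the correspondence $d\mapsto d^{dd}$, and the passage to components of a weak unit in $X^u$ are all sound --- and you close the cycle with a direct proof of (vi)$\Rightarrow$(ii): CSP via Proposition \ref{ucsp}, then an $\omega_1$-indexed disjoint net which is uo-null by CSP and regularity of $\omega_1$, defeated by the constant selection $\alpha_n:=\sup_m\alpha_m'$ permitted in the definition of weak first countability. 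That argument does not appear in the paper (which outsources it), is elementary, and handles the weaker hypothesis (vi) directly, so it is a real gain. Two small caveats. First, in (i)$\Rightarrow$(v) you replace the paper's appeal to \cite[Lemma 2.2]{Li:18} by the remark following Proposition \ref{fsc-unbdd}; this needs the assertions that ${\rm o}$ is idempotent and that $\overline{I_C}^{\,\mathrm{o}}=X$. Both are true (the order adherence of an ideal is the band it generates), but idempotency of order convergence is a fact from the cited literature that you state without justification --- citing Li's lemma, as the paper does, is the cleaner route. Second, in (ii)$\Leftrightarrow$(iii) you pass from ``disjoint families of bands are countable'' to ``$\Bo_X$ has the CSP'' without comment; for the complete Boolean algebra $\Bo_X$ this is the standard ccc-versus-CSP equivalence (maximal disjoint refinement), and a sentence to that effect should be added. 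Neither caveat affects correctness.
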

\begin{proof}
The equivalence of (i)-(iv) and (vi) see in \cite[theorems 2.26 and 3.17]{Deng:25}. (iv)$\Leftrightarrow$(viii) follows from Proposition \ref{ucsp}. (i)$\Rightarrow$(v) follows from combining Propositions \ref{csp} and \ref{fsc-unbdd} with \cite[Lemma 2.2]{Li:18}. Since first countability implies weak first countability, we get (v)$\Rightarrow$(vi). Applying equivalence of (iv) and (v) to $X^{u}$ yields (iv)$\Leftrightarrow$(vii).
\end{proof}

\begin{prop}\label{uso}
If $X$ is ($\sigma$-)strongly first countable, then $\mathrm{uo}$ ($\mathrm{u\sigma o}$) is strongly first countable on $X$. Conversely, if $\mathrm{uo}$ ($\mathrm{u\sigma o}$) is strongly first countable on $X$, then every principal ideal of $X$ is ($\sigma$-)strongly first countable.
\end{prop}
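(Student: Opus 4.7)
The plan is as follows. For the forward direction, assume $X$ is ($\sigma$-)strongly first countable. Both $\mathrm{o}$ and $\sigma\mathrm{o}$ are locally order bounded locally solid convergence structures, so Corollary~\ref{first-c-lemma} yields an increasing sequence $(v_n)\subseteq X_+$ with $X=I_{\{v_n\}}$. Since $\mathrm{uo}=\mathrm{u}_X\mathrm{o}$ and $\mathrm{u\sigma o}=\mathrm{u}_X\sigma\mathrm{o}$, Proposition~\ref{fsc-unbdd}(i) applied with $I=X$ immediately gives strong first countability of $\mathrm{uo}$ (respectively $\mathrm{u\sigma o}$).

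For the converse, fix $v\in X_+$ and write $\eta$ for either $\mathrm{o}$ or $\sigma\mathrm{o}$; we must show that $\eta_{I_v}$ is strongly first countable. By \cite[Corollary~1.6.8]{Beattie:02}, the restriction $\mathrm{u}\eta_X|_{I_v}$ is strongly first countable on $I_v$. Let $\mathcal{B}_{I_v}$ be the bornology of order bounded subsets of $I_v$; since $v$ is a strong unit of $I_v$, the family $\{[-nv,nv]\}_{n\in\N}$ is a countable base for $\mathcal{B}_{I_v}$. Proposition~\ref{Prop: Bdmod-firs-ctb}(v) then asserts that the bounded modification $(\mathrm{u}\eta_X|_{I_v})_{\mathcal{B}_{I_v}}$ is strongly first countable. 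It therefore suffices to identify this modification with $\eta_{I_v}$.

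For a net $(x_\alpha)\subseteq I_v$, if $x_\alpha\xrightarrow{\eta_{I_v}}0$ then $(x_\alpha)$ has an order bounded tail in $I_v$, and Lemma~\ref{so-Iu-int} converts this convergence on the tail into $\eta_X$-convergence, which is stronger than $\mathrm{u}\eta_X$-convergence. Conversely, if $x_\alpha\to 0$ in $(\mathrm{u}\eta_X|_{I_v})_{\mathcal{B}_{I_v}}$, then a tail of $(x_\alpha)$ lies in some $[-nv,nv]$ and $\mathrm{u}\eta_X$-converges to $0$; by \cite[Proposition~8.28]{Bilokopytov:24}, $\mathrm{u}\eta_X$ coincides with $\eta_X$ on order bounded subsets of $X$, so the tail $\eta_X$-converges to $0$, and Lemma~\ref{so-Iu-int} transfers this to $\eta_{I_v}$-convergence of the tail and hence of $(x_\alpha)$.

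I expect the main technical care to be needed in this last step, where one must coordinate three different convergences on $I_v$ --- namely $\mathrm{u}\eta_X|_{I_v}$, $\eta_X|_{I_v}$, and the intrinsic $\eta_{I_v}$ --- and keep track of how ``order bounded tail in $I_v$'' interacts with each. Lemma~\ref{so-Iu-int} takes care of the difference between $\eta_X$ and $\eta_{I_v}$ on order intervals contained in $I_v$, while \cite[Proposition~8.28]{Bilokopytov:24} disposes of the difference between $\mathrm{u}\eta_X$ and $\eta_X$ on order bounded sets. Once these reconciliations are in place, both implications reduce to routine applications of the permanence results collected in Sections~\ref{Sect:Bornological} and~\ref{Sec:  Locally solid}.
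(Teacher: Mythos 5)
Your proof is correct, and your forward direction coincides with the paper's: Corollary \ref{first-c-lemma} (equivalently Proposition \ref{sos1c}, which the paper quotes) supplies the countable generating sequence $(v_n)$ with $X=I_{\{v_n\}}$, and Proposition \ref{fsc-unbdd} applied with $I=X$ does the rest. For the converse the paper argues more tersely: it notes that $\mathrm{u\sigma o}_X$ and $\sigma\mathrm{o}_{I_u}$ agree on each interval $[-nu,nu]$ and then repeats the final-convergence-structure argument of Corollary \ref{so-Iu-sfc}, gluing along the countably many inclusions of the intervals $[-nu,nu]$ into $I_u$. You reach the same conclusion through the bornological machinery of Section \ref{Sect:Bornological}: restrict $\mathrm{u}\eta_X$ to $I_v$, take the bounded modification by the order bounded bornology of $I_v$ (countable base $\{[-nv,nv]\}_{n\in\N}$), apply Proposition \ref{Prop: Bdmod-firs-ctb}(v), and identify $(\mathrm{u}\eta_X|_{I_v})_{\mathcal{B}_{I_v}}$ with $\eta_{I_v}$ via Lemma \ref{so-Iu-int} together with $(\mathrm{u}\eta)_{\mathcal{B}}=\eta$. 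Since Proposition \ref{Prop: Bdmod-firs-ctb}(v) is itself proved by the same final-structure device, the two routes rest on identical machinery; the benefit of yours is that it makes explicit the reconciliation of $\mathrm{u}\eta_X|_{I_v}$, $\eta_X|_{I_v}$ and $\eta_{I_v}$ which the paper compresses into ``following the proof of Corollary \ref{so-Iu-sfc}''. One small economy: the appeal to \cite[Proposition 8.28]{Bilokopytov:24} is not strictly needed, because for a net contained in $[-nv,nv]$ with limit in $[-nv,nv]$, $\mathrm{u}\eta_X$-convergence already gives $\eta_X$-convergence directly from the definition of the unbounded modification (test with $y=2nv$) and local solidity.
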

\begin{proof}
The first claim is a combination of Propositions \ref{sos1c} and \ref{fsc-unbdd}.

Let us prove the second claim (as usual only for the ``$\sigma$'' case).  Let $u\ge 0$ and note that $\mathrm{u\sigma o}_X$ and $\sigma\mathrm{o}_{I_u}$ agree on $[-u,u]$. Following the proof of Corollary \ref{so-Iu-sfc} we conclude that $I_{u}$ is $\sigma$-strongly first countable.
\end{proof}

It is often the case that properties of uo convergence have nicer characterizations than properties of order convergence. Strong first countability is not an exception.

\begin{thm}\label{uo}Let $X$ be an Archimedean vector lattice. The following are equivalent:
\item[(i)] $\mathrm{uo}_{X}$ is strongly first countable;
\item[(ii)] $X$ is almost $\sigma$-order complete and $\mathrm{u\sigma o}_{X}$ is strongly first countable;
\item[(iii)] Every principal ideal is strongly first countable and there is a countable set $A\subseteq X$ such that $X=A^{dd}$;
\item[(iv)] There exists an increasing sequence $(v_n)$ in $X_+$ such that $X=\{v_n\}^{dd}$ and $I_{v_n}$ is strongly first countable for every $n\in \N$, i.e. it is isomorphic to a dense sublattice of $\Co\left(K_{v_{n}}\right)$, where $K_{v_{n}}$ has a countable $\pi$-base;
\item[(v)] $X$ has a countable $\pi$-base;
\item[(vi)] $\Bo_{X}$ has a countable $\pi$-base;
\item[(vii)] $\mathrm{uo}_{X^{u}}$ is strongly first countable;
\item[(viii)] There exists $n\in \N\cup\left\{0,\8\right\}$ so that $X^{u}\simeq \Co_{\8}\left(K\right)\oplus \R^{n}$, where $K=\varnothing$ or $K$ is the Stone space of $RO([0,1])$.
\end{thm}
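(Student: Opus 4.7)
The plan is to close a cycle (i) $\Rightarrow$ (iii) $\Rightarrow$ (iv) $\Rightarrow$ (v) $\Rightarrow$ (vi) $\Rightarrow$ (iii) $\Rightarrow$ (i), making (i)--(vi) equivalent, and then to obtain (vii), (ii), and (viii) by separate arguments building on this core equivalence. The main obstacle will be the closing implication (iii) $\Rightarrow$ (i), which requires adapting the proof of Proposition \ref{fsc-unbdd}(i) to a setting where $\mathrm{o}$ is known to be strongly first countable only on each principal ideal $I_{v_n}$ rather than globally on $X$.

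For the core cycle I would proceed as follows. (i) $\Rightarrow$ (iii) combines the second claim of Proposition \ref{uso} with Proposition \ref{ufc}: strong first countability of $\mathrm{uo}_X$ implies first countability, hence countability of disjoint sets in $X$, so a maximal disjoint set $A$ is countable with $X = A^{dd}$. For (iii) $\Rightarrow$ (iv), replace $A = \{a_n\}$ by $v_n \defeq |a_1| + \cdots + |a_n|$ to obtain an increasing sequence with $X = \{v_n\}^{dd}$; each $I_{v_n}$ has strong unit $v_n$, so Kakutani--Yosida embeds it as a norm-dense sublattice of some $\Co(K_{v_n})$, and Corollary \ref{cbvl} translates strong first countability of $I_{v_n}$ into $K_{v_n}$ having a countable $\pi$-base. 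For (iv) $\Rightarrow$ (v), the super order denseness of $I_{v_n}$ in $\Co(K_{v_n})$ (Lemma \ref{cor}) lets us pick, for each set $U_k$ in a countable $\pi$-base of $K_{v_n}$, some $0 < g_{k,n} \in I_{v_n}$ supported in $U_k$; the collection $\{\tfrac{1}{m} g_{k,n}\}$ is a countable $\pi$-base of $I_{v_n}$, and since $X = \{v_n\}^{dd}$ forces $v_n \wedge x > 0$ for some $n$ whenever $0 < x \in X$, the union over $n$ gives a countable $\pi$-base of $X$. Step (v) $\Rightarrow$ (vi) is direct via $x \mapsto x^{dd}$. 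For (vi) $\Rightarrow$ (iii), pick $0 < b_n \in B_n$ from each member of a countable $\pi$-base of $\Bo_X$ and set $v_n \defeq b_1 + \cdots + b_n$; then $X = \{v_n\}^{dd}$ because for every $0 < x$ some $B_n \subseteq x^{dd}$ yields $b_n \wedge x > 0$. Moreover $\Bo_{I_{v_n}}$ inherits a countable $\pi$-base as the principal order ideal $[0,v_n^{dd}] \subseteq \Bo_X$, so $I_{v_n}$ is strongly first countable by Theorem \ref{main2}.

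The closing step (iii) $\Rightarrow$ (i) is the hard part. I would mimic the proof of Proposition \ref{fsc-unbdd}(i): fix, for each $n$, a countable intersection-closed fixed local basis $\mathcal H_n$ of solid sets for $\mathrm{o}_{I_{v_n}}$ at $0$, and define
\[
\widetilde H^n \defeq \bigl\{ x \in X \mid |x| \wedge v_n \in H \bigr\}, \qquad H \in \mathcal H_n,
\]
letting $\widetilde{\mathcal H}$ be the countable family of finite intersections of such sets. Two observations let the original argument go through verbatim: first, \cite[Lemma~2.2]{Li:18} applied to $X = \{v_n\}^{dd}$ gives $\mathcal F \goesuo 0$ iff $|\mathcal F| \wedge v_n \goeso 0$ in $X$ for every $n$; second, Lemma \ref{so-Iu-int} ensures that on the relevant filter (supported in $[0,v_n] \subseteq I_{v_n}$) this $\mathrm{o}_X$-convergence coincides with $\mathrm{o}_{I_{v_n}}$-convergence, so the local basis $\mathcal H_n$ applies. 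The rest of the verification that $\widetilde{\mathcal H}$ is a countable local basis for $\mathrm{uo}_X$ at $0$ proceeds exactly as in Proposition \ref{fsc-unbdd}(i).

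For the remaining equivalences: (i) $\Leftrightarrow$ (vii) uses that order density of $X$ in $X^u$ yields a Boolean algebra isomorphism $\Bo_X \cong \Bo_{X^u}$ (the map $B \mapsto$ band generated by $B$ in $X^u$, inverse $C \mapsto C \cap X$), transporting countable $\pi$-bases across so that (vi) for $X$ and (vi) for $X^u$ coincide. For (i) $\Leftrightarrow$ (ii), (i) forces the CSP via Proposition \ref{ufc}, whence $\mathrm{u}\sigma\mathrm{o} = \mathrm{uo}$ by Proposition \ref{ucsp} and $X$ is almost $\sigma$-order complete by \cite[Lemma~1.44]{Aliprantis:03}; conversely, (ii) gives each principal ideal $\sigma$-strongly first countable via Proposition \ref{uso}, each is almost $\sigma$-order complete (inherited from $X$ by restricting a super-order-dense embedding into a $\sigma$-complete overlattice), so each is strongly first countable by Theorem \ref{main1}, and the CSP (which propagates from each $I_v$ to $X$ since every order-bounded disjoint set lies in some $I_v$) gives a countable maximal disjoint set. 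Finally, (vii) $\Leftrightarrow$ (viii) is the structural classification of universally complete lattices: $X^u = X^u_{\mathrm{at}} \oplus X^u_{\mathrm{cts}}$, where the atomic summand is $\R^n$ with $n \in \N \cup \{0,\8\}$ because the CSP on $X^u$ forces the set of atoms to be countable, and the atomless summand $X^u_{\mathrm{cts}} \simeq \Co_\8(K)$ has $\Bo_{X^u_{\mathrm{cts}}}$ a complete atomless Boolean algebra with countable $\pi$-base (via (vi)), which by Remark \ref{gleason} is $RO([0,1])$; the converse is a direct verification of (iii) for $\Co_\8(K) \oplus \R^n$.
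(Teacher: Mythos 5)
Your proposal is correct in substance, but it closes the equivalences by a genuinely different route than the paper. The paper's cycle is (i)$\Rightarrow$(iii)$\Rightarrow$(iv)$\Rightarrow$(v)$\Rightarrow$(vi)$\Rightarrow$(vii)$\Rightarrow$(i): it passes through the universal completion, representing $X^u$ as $C^\infty(K)$ with $K$ the Stone space of $\Bo_X$, deducing (vi)$\Rightarrow$(vii) from Corollary \ref{corrr} together with Proposition \ref{fsc-unbdd} (unbounding $\mathrm{o}$ on $C(K)$ by the weak unit $\1$), and then getting (vii)$\Rightarrow$(i) from the fact that $X\hookrightarrow X^u$ is a $\mathrm{uo}$-embedding (\cite[Theorem 3.2]{Gao:17}) plus heredity of strong first countability to subspaces; (iv)$\Rightarrow$(v) is done via Theorem \ref{main2}. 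You instead close the cycle inside $X$ itself with a direct (iii)$\Rightarrow$(i), re-running the filter construction of Proposition \ref{fsc-unbdd} with a separate local basis $\mathcal H_n$ for $\mathrm{o}_{I_{v_n}}$ on each principal ideal, glued by Lemma \ref{so-Iu-int} and by the identification $\mathrm{uo}=\mathrm{u}_I\mathrm{o}$ for $I=I_{\{v_n\}}$ via \cite[Lemma 2.2]{Li:18}; this adaptation does go through (the test filters $\abs{\mathcal F}\wedge v_n$ live in $[0,v_n]$, where $\mathrm{o}_X$ and $\mathrm{o}_{I_{v_n}}$ agree, and $\mathrm{o}_{I_{v_n}}$-convergence lifts to $\mathrm{o}_X$-convergence since infima of subsets of an ideal that vanish in the ideal vanish in $X$). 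You then obtain (vii) from the core equivalence applied to $X^u$ together with $\Bo_X\cong\Bo_{X^u}$, and (viii) essentially as the paper does via Remark \ref{gleason}. What each buys: your route avoids invoking the $\mathrm{uo}$-embedding theorem and treats (vii) as a corollary, at the cost of redoing the technical unbounding argument under a weaker hypothesis; the paper reuses Proposition \ref{fsc-unbdd} and Corollary \ref{corrr} verbatim and gets (vii) along the way.

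Two small repairs are needed. In (vi)$\Rightarrow$(iii) you only establish strong first countability of the ideals $I_{v_n}$, while (iii) asserts it for \emph{every} principal ideal; either run the same argument for arbitrary $u$ (for each $u\in X_+$, $\Bo_{I_u}$ is isomorphic to the interval $[0,u^{dd}]$ of $\Bo_X$, which inherits a countable $\pi$-base, so Theorem \ref{main2} applies to $I_u$), or note that your (iii)$\Rightarrow$(i) only uses the weaker statement and the full (iii) is then recovered from (i)$\Rightarrow$(iii). In (ii)$\Rightarrow$(i), the closing clause that the CSP ``gives a countable maximal disjoint set'' is false as stated (the CSP only controls order bounded disjoint sets; compare $c_{00}(T)$), but it is also unnecessary: once you have the CSP of $X$ (which your argument via Proposition \ref{uso} and Theorem \ref{main1} does deliver), Proposition \ref{ucsp} gives $\mathrm{uo}=\mathrm{u\sigma o}$, and (i) follows immediately from the hypothesis that $\mathrm{u\sigma o}_X$ is strongly first countable — which is exactly the paper's argument.
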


\begin{proof}
First, (iii)$\Rightarrow$(iv) and (v)$\Rightarrow$(vi) are straightforward. (i)$\Rightarrow$(iii) follows from Propositions \ref{ufc} and \ref{uso}. (vi)$\Leftrightarrow$(viii) follows from Remark \ref{gleason}. (vii)$\Rightarrow$(i) follows from the fact that the embedding of $X$ into $X^{u}$ is a $\mathrm{uo}$-embedding (see \cite[Theorem 3.2]{Gao:17}).\medskip

(i)$\Rightarrow$(ii): It follows from Proposition \ref{uso} that every principal ideal of $X$ is strongly first countable, and therefore has the CSP (and hence is almost $\sigma$-order complete) due to Corollary \ref{main0}.  Therefore $X$ has the CSP, thus $\mathrm{o}=\sigma\mathrm{o}$, which yields $\mathrm{uo}=\mathrm{u\sigma o}$. We conclude that $\mathrm{u\sigma o}_{X}$ is strongly first countable.\medskip

(ii)$\Rightarrow$(i): By Proposition \ref{uso}, every principal ideal of $X$ is $\sigma$-strongly first countable and almost $\sigma$-order complete.  Hence, by Theorem \ref{main1} every principal ideal of $X$ has the CSP. It follows that $X$ has the CSP, therefore by Proposition \ref{ucsp} $\mathrm{uo}=\mathrm{u\sigma o}$ is strongly first countable on $X$. \medskip

(iv)$\Rightarrow$(v): According to Theorem \ref{main2}, each $I_{v_n}$ has a countable $\pi$-base.  It is easy to see that their union is a $\pi$-base for $X$.\medskip

(vi)$\Rightarrow$(vii): Recall that $X^{u}$ is isomorphic to $C^{\infty}\left(K\right)$, where $K$ is the Stone space of $\Bo_{X}$. Since the latter has a countable $\pi$-base, it follows that it is strongly first countable, hence by Corollary \ref{corrr} $C(K)$ is strongly first countable. Since $\1$ is a weak unit in $C^{\infty}\left(K\right)$, uo convergence there is obtained by unbounding order convergence on $C(K)$ by $\1$.  Since the latter convergence is strongly first countable, it follows from Proposition \ref{fsc-unbdd} that the former is too.
\end{proof}

\begin{rem}
We note that if $X$ has a weak unit $e$, then ${\rm uo}$ is first countable on $X$ if and only if $X$ has CSP, and ${\rm uo}$ is strongly first countable on $X$ if and only if $I_e$ is strongly first countable.\qed
\end{rem}

\begin{rem}\label{sep1}
Note that under the conditions of Theorem \ref{uo}, $X$ has a countable subset whose order adherence is $X$.  Indeed, let $A\subseteq X_{+}$ be a countable $\pi$-base; we claim that $Y:=\spa_{\Q} A$ is $\mathrm{o}$-dense. Indeed, the proof of \cite[Theorem 1.34]{Aliprantis:06} shows that $x=\bigvee\left( Y\cap\left[0,x\right]\right)$, for all $x>0$. Hence, for any $x\in X$, both $x^{+}$ and $x^{-}$ can be order approximated by elements of $Y$; since $Y$ is a subgroup, it follows that it is $\mathrm{o}$-dense.
\qed\end{rem}

\begin{rem}\label{sep2}
Having a countable subset whose order adherence is $X$ does not imply the CSP. Let $X:=\ell_{\8}\left(\left[0,1\right)\right)$, which does not have the CSP. Let $Y$ be the set of all functions $f:\left[0,1\right)\to\Q$ such that there is $n\in\N$ so that $f$ is constant on $\left[\frac{k-1}{n},\frac{k}{n}\right)$, for every $k=1,...,n$. Clearly, $Y$ is countable. It is not hard to show that $Y\cap\left[-n\1,n\1\right]$ is dense in $\left[-n\1,n\1\right]$ with respect to pointwise convergence, for every $n\in\N$. Since on order intervals in $X$ pointwise convergence agrees with order convergence, it follows that the order adherence of $Y$ is $X$.
\qed\end{rem}

\begin{rem}\label{sep3}
The conditions in Proposition \ref{ufc} do not imply existence of a countable subset whose $\mathrm{uo}$-closure is $X$. Let $\left(\Omega,\mu\right)$ be the product of continuum many copies of $[0,1]$ with Lebesgue measure. It is easy to see that $X:=L_{1}\left(\mu\right)$ satisfies condition (ii) in Proposition \ref{ufc}.  Assume that $X$ has a countable subset $Y$ whose $\mathrm{uo}$-closure is $X$. Since $\mathrm{uo}$ convergence is stronger than convergence in measure, it follows that $Y$ is dense with respect to the topology of convergence in measure, which is metrizable. Note that this topology agrees with the norm topology on $\left[-\1,\1\right]$, and so it follows from our assumption that $\left[-\1,\1\right]$ is separable with respect to the norm topology. We thus run into a contradiction, since the coordinate projections constitute a continuum large subset of $\left[-\1,\1\right]$ with equal pairwise distances. For more on separability with respect to convergence in measure, see \cite{Kandic:22}.
\qed\end{rem}

\begin{rem}
Combining Propositions \ref{csp} and \ref{ufc} as well as Theorems \ref{main2} and \ref{uo} we have the following sequence of implications:
\begin{center}
${\rm o}$ -- strong first countability $\Rightarrow$ $\mathrm{uo}$ -- strong first countability $\Rightarrow$ $\mathrm{uo}$ -- first countability $\Rightarrow$ $\mathrm{o}$ -- first countability.
\end{center}
None of these implications are reversible:  $C_0(\R)$ is $\mathrm{uo}$ -- strong first countable but not ${\rm o}$ -- strong first countable, $L_{\8}\left[0,1\right]$ is $\mathrm{uo}$ -- first countable but not $\mathrm{uo}$ -- strong first countable, and, $c_{00}\left(\R\right)$ is $\mathrm{o}$ -- first countable but not $\mathrm{uo}$ -- first countable.

It follows from Remarks \ref{sep1}, \ref{sep2} and \ref{sep3} that
\begin{center}
$\mathrm{uo}$ -- strong first countability $\Rightarrow$ order separability in the sense of adherence $\Rightarrow$ $\mathrm{uo}$ separability in the sense of closure.
\end{center}
However,
\begin{center}
order separability in the sense of adherence $\not\Rightarrow$ $\mathrm{o}$ -- first countability,
\end{center}
and
\begin{center}
$\mathrm{uo}$ -- first countability $\not\Rightarrow$ $\mathrm{uo}$ separability in the sense of closure.\hfill\qed
\end{center}
\end{rem}

\section{Sequential arguments}\label{Sec:  Sequential arguments}

From the point of view of analysis, countability conditions on a topology or convergence space are useful because they give access to sequential arguments.  Indeed, as we observed in Section \ref{Sec:  Sequential}, in a weakly first countable space, hence also in a first countable space, the adherence (closure) of a set agrees with its sequential adherence (closure).  In this section we recall a few other results of this nature, mostly taken from  \cite{Beattie:02}, and state some consequences for the concrete convergence structures studied in Section \ref{Sec:  Locally solid}.  The reader will easily formulate similar results not stated here.

Let $X$ and $Y$ be convergence space.  A function $f:X\to Y$ is \term{sequentially continuous} if $x_n\to x$ in $X$ implies that $f(x_n)\to f(x)$ in $Y$.  In general, a sequentially continuous function may fail to be continuous;  this is true even if both spaces are first countable, see \cite[Example 1.6.12]{Beattie:02}.  A more relevant example for us comes from \cite{Taylor:20}.

\begin{ex}
Let $X$ be a $\rm {ru}$-complete vector lattice, and let $Y$ be an $\rm{ru}$-closed sublattice of $X$. The inclusion map from $Y$ into $X$ is $\rm{ru}$-continuous; in other words, the identity map on $Y$ is $\rm{ru}_{Y}$-to-$\rm{ru}_{X}$ continuous. Furthermore, a sequence $(y_n)$ in $Y$ converges ${\rm ru}$ to $0$ in $Y$ if and only if it converges ${\rm ru}$ to $0$ in $X$, but this is not generally true for nets; that is, the identity map on $Y$ is $\rm{ru}_{X}$-to-$\rm{ru}_{Y}$ sequentially continuous, but not continuous.
\qed\end{ex}


\begin{thm}[cf. \cite{Beattie:02}, Theorem 1.6.14 \& Proposition 1.6.16]
Let $X$ and $Y$ be convergence spaces with $X$ first countable, and $f:X\to Y$ a sequentially continuous function.  If $Y$ is countably sequentially determined, in particular if $Y$ is strongly first countable, then $f$ is continuous.
\end{thm}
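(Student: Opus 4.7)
The plan is to use first countability of $X$ to reduce the general filter convergence question to a sequential one, then apply sequential continuity of $f$, and finally invoke the property that $Y$ is countably sequentially determined. I work in the filter language throughout, using freely the axiom that convergence is preserved under passing to finer filters.

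Suppose $\mathcal{F}\to x$ in $X$; the goal is $f(\mathcal{F})\to f(x)$ in $Y$. By first countability of $X$, choose a filter $\mathcal{G}\subseteq\mathcal{F}$ with a countable base such that $\mathcal{G}\to x$. After replacing each base element with the intersection of its predecessors, we may assume the base is a decreasing sequence $\{G_n\}_{n\in\N}$. Since $f(\mathcal{F})\supseteq f(\mathcal{G})$, it suffices to show $f(\mathcal{G})\to f(x)$ in $Y$.

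Now $f(\mathcal{G})$ has countable decreasing base $\{f(G_n)\}_{n\in\N}$. The relevant consequence of $Y$ being countably sequentially determined is that a filter with a countable decreasing base $\{B_n\}$ converges to $y$ as soon as every sequence $(y_n)$ with $y_n\in B_n$ converges to $y$. So it suffices to fix arbitrary $z_n\in f(G_n)$ and show $z_n\to f(x)$ in $Y$. For each $n$, pick $x_n\in G_n$ with $f(x_n)=z_n$. Since $\{x_k\}_{k\ge n}\subseteq G_n$ for every $n$, the tail filter $[x_n]$ contains $\mathcal{G}$, so $[x_n]\to x$ in $X$ by the superset axiom; equivalently, $x_n\to x$. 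Sequential continuity of $f$ now gives $z_n=f(x_n)\to f(x)$ in $Y$, which completes the reduction.

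The main technical point is the passage from sequential continuity — which only controls $f$ along individual convergent sequences — to convergence of the entire filter $f(\mathcal{G})$; this is precisely what the countably sequentially determined hypothesis on $Y$ supplies, since it permits \emph{arbitrary} selections $z_n\in f(G_n)$ rather than a single distinguished diagonal sequence. The strong first countability hypothesis enters only through the implication ``strongly first countable $\Rightarrow$ countably sequentially determined'' (Proposition 1.6.16 in Beattie): given a countable, intersection-closed local basis $\mathcal{H}$ at $f(x)$ as in Proposition \ref{prop:  Existence of convergent directed fixed local basis}, one intersects $\mathcal{H}$ with a countable descending base $\{B_n\}$ along which every selection sequence converges to $f(x)$, and extracts from $\mathcal{H}\cap\{B_n\}$ a filter base witnessing convergence. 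This last verification is the only place where a careful combinatorial argument is required; the remainder of the proof is essentially bookkeeping with the filter axioms.
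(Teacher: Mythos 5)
Your argument is correct, and it is essentially the standard proof of this result; note that the paper itself offers no proof at all here, but simply imports the statement from Beattie--Butzmann (Theorem 1.6.14 together with Proposition 1.6.16), so there is nothing internal to compare against. Your reduction is sound: first countability of $X$ gives $\mathcal{G}\subseteq\mathcal{F}$ with a decreasing countable base $\{G_n\}$ converging to $x$, the finer-filter axiom reduces the problem to $f(\mathcal{G})\to f(x)$, and the lifting step (choose $x_n\in G_n$ with $f(x_n)=z_n$, observe $[x_n]\supseteq\mathcal{G}$, hence $x_n\to x$, hence $z_n\to f(x)$ by sequential continuity) is exactly right. Two caveats. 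First, the paper never defines ``countably sequentially determined,'' and you use a formulation in terms of selections $z_n\in B_n$ from one fixed decreasing base; if the definition is instead phrased for all sequences whose tail filter refines the given filter (equivalently, sequences eventually in every $B_n$), your lifting argument still applies verbatim after discarding an initial segment and choosing, for each $k$, a nondecreasing index $n(k)\to\infty$ with $z_k\in f(G_{n(k)})$ --- so this is a presentational rather than a mathematical gap, though you should state which formulation you are using. Second, the ``in particular'' clause (strongly first countable $\Rightarrow$ countably sequentially determined) is not proved in your write-up but only sketched and attributed to Proposition 1.6.16 of Beattie--Butzmann; that is acceptable since the paper leans on the same citation, but be aware that the sketch as phrased (``extract a filter base from $\mathcal{H}\cap\{B_n\}$'') hides the real content, which is a diagonal construction of a non-convergent selection avoiding, infinitely often, each member of the local basis that contains no $B_n$.
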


\begin{cor}\label{Cor:  Ru Continuity I}
Let $X$ be a first countable convergence space and $Y$ an Archimedean vector lattice.  Assume that there exists a countable set $C\subseteq Y$ so that $I_C=Y$.  If $f:X\to Y$ is sequentially continuous with respect to relative uniform convergence on $Y$, then it is continuous.
\end{cor}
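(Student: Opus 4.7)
My plan is to deduce this corollary directly from the preceding theorem by verifying that the target space, equipped with relative uniform convergence, is strongly first countable. The three ingredients are: (a) the countability assumption on $C$, (b) Proposition~\ref{ru-fc}, and (c) the theorem just stated, which promotes sequential continuity to continuity when the codomain is strongly first countable (in particular, countably sequentially determined).

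First I would reduce the hypothesis to the form used in Proposition~\ref{ru-fc}. The set $C$ is a countable subset of $Y$ but not necessarily of $Y_+$. Replacing $C$ with $|C| \defeq \{|c| \mid c \in C\}$ yields a countable subset of $Y_+$. Since an ideal is solid and linear, we have $|c| \in I_C$ for every $c \in C$, so $I_{|C|} \subseteq I_C$; conversely, $c = c^+ - c^-$ is in $I_{|C|}$, so $I_C \subseteq I_{|C|}$. Hence $Y = I_C = I_{|C|}$ with $|C|$ countable and contained in $Y_+$. This is precisely condition (i) of Proposition~\ref{ru-fc}.

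Next, applying the equivalence (i)$\Leftrightarrow$(ii) of Proposition~\ref{ru-fc} gives that $(Y,\mathrm{ru})$ is strongly first countable. In particular, $(Y,\mathrm{ru})$ is countably sequentially determined, so the theorem cited immediately before the corollary applies: a sequentially continuous map from a first countable convergence space $X$ to a strongly first countable convergence space is automatically continuous. Since $f\colon X \to (Y,\mathrm{ru})$ is assumed sequentially continuous and $X$ is first countable by hypothesis, we conclude that $f$ is continuous, which is the desired conclusion.

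There is no genuine obstacle here; the corollary is a direct packaging of Proposition~\ref{ru-fc} and the preceding theorem, with the only small point being the passage from a countable generating set to a countable generating set of \emph{positive} elements via the absolute value. Consequently the proof is essentially a one-line invocation of those two results after this elementary reduction.
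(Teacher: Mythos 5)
Your proof is correct and matches the route the paper intends: the corollary is stated without proof as an immediate consequence of Proposition \ref{ru-fc} (strong first countability of $\mathrm{ru}$ when $Y=I_C$ for countable $C$) together with the preceding theorem promoting sequential continuity to continuity. Your extra remark that one may replace $C$ by $\{|c| \mid c\in C\}\subseteq Y_+$ without changing the generated ideal is a correct and harmless normalization.
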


\begin{cor}
Let $X$ and $Y$ be Archimedean vector lattices and $T:X\to Y$ a linear operator.  Consider the following statements. \begin{enumerate}
    \item[(i)] $T$ is order bounded.
    \item[(ii)] $T$ is ${\rm ru}$-to-${\rm ru}$ continuous.
    \item[(iii)] $T$ is sequentially ${\rm ru}$-to-${\rm ru}$ continuous.
\end{enumerate}
Always, (i) and (ii) are equivalent, and these imply (iii).  If there exists a countable set $C\subseteq Y$ so that $I_C=Y$, then all three statements are equivalent.
\end{cor}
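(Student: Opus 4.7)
My plan is to handle the four claims in a natural order: the unconditional equivalence (i)~$\Leftrightarrow$~(ii), the trivial implication (ii)~$\Rightarrow$~(iii), and finally (iii)~$\Rightarrow$~(ii) under the countable--$C$ hypothesis.

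For (i)~$\Rightarrow$~(ii) I will chase regulators directly. Given $x_\alpha \goesu 0$ with regulator $u\in X_+$, order-boundedness of $T$ yields $v\in Y_+$ with $T[-u,u]\subseteq[-v,v]$; then for every $\varepsilon>0$, eventually $x_\alpha\in[-\varepsilon u,\varepsilon u]$, whence linearity gives $T(x_\alpha)\in\varepsilon T[-u,u]\subseteq[-\varepsilon v,\varepsilon v]$, so that $T(x_\alpha)\goesu 0$ with regulator $v$. For (ii)~$\Rightarrow$~(i) I will use that continuous linear maps between convergence vector spaces preserve bounded sets, together with the observation (stated in the text preceding Corollary~\ref{first-c-lemma}) that for a locally solid, locally order bounded convergence structure such as $\mathrm{ru}$, the bounded sets are exactly the order bounded sets. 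Concretely, if $B\subseteq X$ is order bounded then $[\mathcal N_0 B]\goesu 0$ in $X$, so continuity gives $[\mathcal N_0 T(B)]=T([\mathcal N_0 B])\goesu 0$ in $Y$, showing that $T(B)$ is order bounded in $Y$; this is exactly the definition of $T$ being order bounded.

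The implication (ii)~$\Rightarrow$~(iii) is immediate, since a sequence is a particular kind of net. For (iii)~$\Rightarrow$~(ii) under the assumption that $Y=I_C$ for a countable $C\subseteq Y$, Proposition~\ref{ru-fc} furnishes two ingredients: $\mathrm{ru}$ on the domain $X$ is always first countable, and under the hypothesis $\mathrm{ru}$ on $Y$ is strongly first countable. Corollary~\ref{Cor:  Ru Continuity I} then promotes sequential $\mathrm{ru}$-to-$\mathrm{ru}$ continuity of the linear map $T$ to full $\mathrm{ru}$-to-$\mathrm{ru}$ continuity, which is precisely~(ii).

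I anticipate no genuine obstacle. The only subtlety worth flagging is in (ii)~$\Rightarrow$~(i): one must invoke the identification of $\mathrm{ru}$-bounded sets with order bounded sets before concluding that ``a continuous linear map preserves bounded sets'' is equivalent to ``$T$ is order bounded.'' Everything else reduces to routine book-keeping or a direct appeal to Proposition~\ref{ru-fc} and Corollary~\ref{Cor:  Ru Continuity I}.
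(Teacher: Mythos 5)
Your argument is correct, and for the part of the corollary that is actually new here --- the promotion of sequential continuity to continuity when $Y=I_C$ with $C$ countable --- you follow exactly the paper's route: Proposition \ref{ru-fc} gives first countability of ${\rm ru}$ on $X$ and strong first countability on $Y$, and Corollary \ref{Cor:  Ru Continuity I} (resting on the Beattie--Butzmann theorem quoted before it) does the rest. The only place you diverge is the unconditional equivalence (i)$\Leftrightarrow$(ii): the paper simply cites \cite[Theorem 5.1]{Taylor:20}, whereas you prove it from scratch --- a regulator chase for (i)$\Rightarrow$(ii), and for (ii)$\Rightarrow$(i) the standard fact that a continuous linear map between convergence vector spaces carries bounded sets to bounded sets (which you verify inline via $T([\mathcal N_0 B])=[\mathcal N_0 T(B)]$), combined with the identification, recorded in the paper before Corollary \ref{first-c-lemma}, of ${\rm ru}$-bounded sets with order bounded sets since ${\rm ru}$ is locally solid and locally order bounded. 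Both steps of your direct argument are sound (and you correctly flag the one subtlety, namely that the boundedness transfer only yields order boundedness after the ${\rm ru}$-bounded $=$ order bounded identification); the trade-off is that your version is self-contained at the cost of a little extra work, while the paper's is a one-line citation. One cosmetic remark: you only treat convergence to $0$ in (i)$\Rightarrow$(ii); this suffices because $T$ is linear and ${\rm ru}$ is a linear (translation-invariant) convergence, but it is worth saying so explicitly.
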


\begin{proof}
The equivalence of (i) and (ii) is proven in \cite[Theorem 5.1]{Taylor:20}, and (ii) clearly implies (iii).  The remainder of the result follows from Corollary \ref{Cor:  Ru Continuity I}.
\end{proof}

The reader is invited to formulate an analogous result for the interrelationship between order continuity and sequential order continuity using the results of the preceding section.

\begin{ex}
It is not hard to show that the operator $T:L_{1}[0,1]\to c_{0}$ obtained by integrating against the Rademacher functions is sequentially ${\rm ru}$-to-${\rm ru}$ continuous, but not order bounded, and so not ${\rm ru}$-to-${\rm ru}$ continuous. Note that both Banach lattices in question are order continuous, and so $\rm ru=\rm o=\sigma\rm o$. Hence, the considered operator is also sequentially order- and $\sigma$-order continuous, but not order- or $\sigma$-order continuous.
\qed\end{ex}

Let $X$ be a convergence vector space.  A net $(x_\alpha)_{\alpha \in A}$ in $X$ is a \term{Cauchy net} if the net of differences $(x_\alpha - x_\beta)_{(\alpha,\beta)\in A^2}$ converges to $0$; Cauchy sequences are defined in the same way.  We call $X$ \term{complete} if every Cauchy net in $X$ is convergent, and \term{sequentially complete} if every Cauchy sequence is convergent.  Clearly, every complete space is sequentially complete.  The converse is not true even for Fr\'{e}chet-Urysohn topological vector spaces.

\begin{ex}
Let $Z$ be a Tychonoff space. It is easy to see that $C(Z)$ is pointwise complete if and only if $Z$ is discrete. However, if $Z$ is Lindel\"{o}f $P$-space, then pointwise convergence on $C(Z)$ is sequentially complete and Fr\'{e}chet-Urysohn, see the \cite[proof of Corollary 14.2]{Kakol:11}. An example of such a space which is not discrete is the one point Lindel\"{o}fication of an uncountable discrete space, that is an uncountable set with a selected point $z_{0}$ such that a set $A\subseteq Z$ is closed if it is either at most countable, or contains $z_{0}$.
\qed\end{ex}


For first countable spaces, the situation is better.

\begin{thm}[cf. \cite{Beattie:02}, Proposition 3.6.5]
Let $X$ be a first countable convergence vector space.  Then $X$ is complete if and only if it is sequentially complete.
\end{thm}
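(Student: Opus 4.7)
The forward implication is trivial, since every Cauchy sequence is a Cauchy net. For the converse, assume $X$ is first countable and sequentially complete, and let $(x_\alpha)_{\alpha\in A}$ be a Cauchy net; by definition, the difference net $(x_\alpha-x_\beta)_{(\alpha,\beta)\in A\times A}$ converges to $0$. My plan is to extract a Cauchy subsequence from $(x_\alpha)$, identify its limit $y$ via sequential completeness, and then verify that $x_\alpha\to y$ using continuity of addition and the structural filter axioms of a convergence space.

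By first countability applied to the tail filter of the difference net, there exists a filter $\mathcal{G}\subseteq[x_\alpha-x_\beta]$ with a countable decreasing base $\{G_n\}_{n\in\N}$ satisfying $\mathcal{G}\to 0$. For every $n$, fix $\alpha_n^0,\beta_n^0\in A$ so that $\{x_\alpha-x_\beta\mid\alpha\ge\alpha_n^0,\,\beta\ge\beta_n^0\}\subseteq G_n$, and then pick an increasing sequence $(\delta_n)$ in $A$ with $\delta_n\ge\alpha_k^0,\beta_k^0$ for every $k\le n$. Now, whenever $m,n\ge N$, both $\delta_m,\delta_n\ge\alpha_N^0,\beta_N^0$, so $x_{\delta_m}-x_{\delta_n}\in G_N$. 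Consequently the tail filter of $(x_{\delta_m}-x_{\delta_n})_{(m,n)\in\N^2}$ refines $\mathcal{G}$; since a filter finer than a convergent one converges to the same limit (this is the filter counterpart of axiom (ii) for quasi-subnets), $(x_{\delta_n})$ is a Cauchy sequence, and sequential completeness yields $y\in X$ with $x_{\delta_n}\to y$.

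It remains to show $x_\alpha\to y$. I consider the decomposition $x_\alpha-y=(x_\alpha-x_{\delta_n})+(x_{\delta_n}-y)$ as a single net indexed by $(\alpha,n)\in A\times\N$ with the product order. For $\alpha\ge\alpha_N^0$ and $n\ge N$ we have $\delta_n\ge\beta_N^0$, so $x_\alpha-x_{\delta_n}\in G_N$; hence the tail filter of the first summand refines $\mathcal{G}$ and converges to $0$. The tail of the second summand past $(\alpha_0,n_0)$ is simply $\{x_{\delta_n}-y\mid n\ge n_0\}$, independent of $\alpha_0$, so its tail filter agrees with that of the convergent sequence $(x_{\delta_n}-y)_n$ and also converges to $0$. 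Continuity of addition in the convergence vector space then gives $(x_\alpha-y)_{(\alpha,n)}\to 0$, and since its tail past $(\alpha_0,n_0)$ is $\{x_\alpha-y\mid\alpha\ge\alpha_0\}$, independent of $n_0$, the tail filter of this doubly-indexed net coincides with that of $(x_\alpha-y)_\alpha$. Therefore $x_\alpha\to y$, as required. The main delicacy lies in this last step, where one must keep careful track of the tail filters of nets with different index sets and invoke, at two separate moments, the principle that passing to a finer filter preserves convergence.
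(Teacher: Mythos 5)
Your proof is correct. Note that the paper does not prove this statement at all -- it is imported from \cite{Beattie:02}*{Proposition 3.6.5} -- so there is no internal argument to compare against; your proof is the standard one for this result, carried out in the net/tail-filter language of the paper rather than purely with filters. All the delicate points check out: first countability at $0$ gives the coarser countably based filter $\mathcal G$, the sequence $(x_{\delta_n})$ is indeed Cauchy because its difference net's tail filter refines $\mathcal G$, and in the final step the two invocations of the finer-filter (quasi-subnet) axiom, the identification of nets with equal tail filters, and the continuity of addition applied to the doubly indexed net over $A\times\N$ are exactly what is needed to pass from $x_{\delta_n}\to y$ to $x_\alpha\to y$.
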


\begin{cor}
Let $X$ be an Archimedean vector lattice.  Then the following hold. \begin{enumerate}
    \item[(i)] $X$ is ${\rm ru}$-complete if and only if it is sequentially ${\rm ru}$-complete.
    \item[(ii)] $X$ is $\sigma{\rm o}$-complete if and only if it is sequentially $\sigma{\rm o}$-complete.
    \item[(iii)] If $X$ has the CSP, then $X$ is ${\rm o}$-complete if and only if it is sequentially ${\rm o}$-complete, if and only if it is $\sigma{\rm o}$-complete.
    \item[(iv)] If disjoint sets in $X$ are at most countable, then $X$ is ${\rm uo}$-complete if and only if it is sequentially ${\rm uo}$-complete.
\end{enumerate}
\end{cor}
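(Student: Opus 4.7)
The plan is to reduce the entire corollary to a single invocation of the preceding theorem, which asserts that sequential completeness coincides with completeness on first countable convergence vector spaces. Each of the four convergence structures in the corollary is locally solid on $X$ (and hence a linear convergence structure making $X$ a convergence vector space), so the only nontrivial thing I need to verify in each case is first countability of the relevant convergence on $X$.

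For part (i) I would appeal directly to Proposition~\ref{ru-fc}, which records that relative uniform convergence is always first countable. For part (ii) I would use the observation made immediately after the definition of $\sigma\mathrm{o}$ (and recalled in the proof of Proposition~\ref{csp}) that $\sigma$-order convergence is always first countable: the dominating sequence $(u_n)$ yields a countable base of order intervals. In both cases the theorem applies verbatim.

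For part (iii), the CSP hypothesis, combined with Proposition~\ref{csp}, gives both $\mathrm{o}=\sigma\mathrm{o}$ on $X$ and first countability of order convergence. Applying the theorem to $\mathrm{o}$ yields the equivalence of $\mathrm{o}$-completeness with sequential $\mathrm{o}$-completeness; the coincidence $\mathrm{o}=\sigma\mathrm{o}$ then identifies these with $\sigma\mathrm{o}$-completeness. For part (iv), the assumption that disjoint sets in $X$ are at most countable is condition (ii) of Proposition~\ref{ufc}, which is equivalent to first countability of $\mathrm{uo}$ on $X$ (condition (v) there); the theorem applies once more.

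There is essentially no obstacle here: each part is a bookkeeping exercise that pairs the preceding theorem with the appropriate first countability result from Section~\ref{Sec: order} (for ${\rm ru}$, ${\rm o}$, $\sigma{\rm o}$, ${\rm uo}$). The only point worth mentioning, if one wishes to be pedantic, is that local solidity implies linearity of the convergence structure, so that the ambient ``convergence vector space'' hypothesis of the theorem is met in every case without further comment.
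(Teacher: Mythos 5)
Your proposal is correct and follows exactly the route the paper intends: the corollary is stated as an immediate consequence of the preceding theorem, with first countability supplied in each case by Proposition \ref{ru-fc}, the remark that $\sigma\mathrm{o}$ is always first countable, Proposition \ref{csp} (together with $\mathrm{o}=\sigma\mathrm{o}$ under CSP), and Proposition \ref{ufc}, respectively. No substantive differences from the paper's argument.
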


\begin{question}
Is it true that every sequentially complete weakly first countable convergence vector space is complete?
\end{question}

\subsection*{Acknowledgements}

The authors would like to thank Jurie Conradie, Yang Deng, Vladimir Troitsky and Marten Wortel for valuable discussions. Additionally we credit Will Brian for contributing Proposition \ref{cdp}, and the service \href{mathoverflow.com/}{MathOverflow} which made it possible. We thank the anonymous reviewer for stimulating comments which inspired Propositions \ref{Prop:  Locally solid mod properties} and \ref{Prop:  Locally solid mod countability}. Part of the work on the project was done during the first and the third authors' visit to Mathematical Institute at the Leiden University. The authors are grateful to Marcel de Jeu for hospitality. The first author was supported by Pacific Institute for the Mathematical Sciences.

\end{document}